\colorlet{darkblue}{blue!55!black}
\colorlet{darkcyan}{cyan!50!black}
\colorlet{darkgreen}{green!60!black}
\def\eqref#1{\textcolor{darkblue}{(\ref{#1})}}
\crefname{hypothesis}{hypothesis}{hypotheses}
\Crefname{hypothesis}{Hypothesis}{Hypotheses}
\let\oldequation\equation
\let\oldendequation\endequation
\renewenvironment{equation}{\linenomathNonumbers\oldequation}{\oldendequation\endlinenomath}
\let\expandafter\oldequationstar\csname equation*\endcsname
\let\expandafter\oldendequationstar\csname endequation*\endcsname
\renewenvironment{equation*}{\linenomathNonumbers\oldequationstar}{\oldendequationstar\endlinenomath}
\let\oldalign\align
\let\oldendalign\endalign
\let\expandafter\oldalignstar\csname align*\endcsname
\let\expandafter\oldendalignstar\csname endalign*\endcsname
\renewenvironment{align*}{\linenomathNonumbers\oldalignstar}{\oldendalignstar\endlinenomath}
\newcounter{intro}
\newcounter{HypCounter}
\newtheorem{introthm}[intro]{Theorem}
\newtheorem{introcor}[intro]{Corollary}
\theoremstyle{plain}
\newtheorem{theorem}{Theorem}[section]
\newtheorem{lemma}[theorem]{Lemma}
\newtheorem{corollary}[theorem]{Corollary}
\newtheorem{proposition}[theorem]{Proposition}
\theoremstyle{definition}
\newtheorem{definition}[theorem]{Definition}
\newtheorem{example}[theorem]{Example}
\newtheorem{nonexample}[theorem]{Non-Example}
\newtheorem{hypothesis}[HypCounter]{Hypothesis}
\newtheorem*{hypothesis*}{Hypothesis}
\newtheorem{remark}[theorem]{Remark}
\newtheorem*{disclaimer}{Disclaimer}
\newtheorem*{ack}{Acknowledgements}
\numberwithin{equation}{section}
\numberwithin{theorem}{section}
\title[Descending strong generation]{Descending strong generation \\ in algebraic geometry}
\author[T.~De Deyn]{Timothy De Deyn}
\address{T.~De Deyn,
School of Mathematics and Statistics,
University of Glasgow, 
Glasgow G12 8QQ,
United Kingdom}
\email{timothy.dedeyn@glasgow.ac.uk}
\author[P.~Lank]{Pat Lank}
\address{P.~Lank,
Dipartimento di Matematica “F. Enriques”, Universit\`{a} degli Studi di Milano, Via Cesare
Saldini 50, 20133 Milano, Italy}
\email{plankmathematics@gmail.com}
\author[K. ~Manali Rahul]{Kabeer Manali Rahul}
\address{K. ~Manali Rahul,
Center for Mathematics and its Applications, 
Mathematical Science Institute, Building 145, 
The Australian National University, 
Canberra, ACT 2601, Australia}
\email{kabeer.manalirahul@anu.edu.au}
\date{\today}
\keywords{Strong generation of triangulated categories, Zariski descent, Mayer--Vietoris triangle, compact/coherent boundedness}
\subjclass[2020]{14A30 (primary), 18G80, 14A22} 
\begin{document}
    
\begin{abstract}
    We formalize the main approach for showing Zariski descent-type statements for strong generation of triangulated categories associated to algebro-geometric objects. 
    This recovers various known statements in the literature. 
    As applications we show that strong generation for the singularity category of a Noetherian separated scheme is Zariski local and obtain a strong generation result for the bounded derived category of a Noetherian concentrated algebraic stacks with finite diagonal.
\end{abstract}

\maketitle

\tableofcontents

\section{Introduction}
\label{sec:introduction}

The goal for this note is twofold.
Firstly, we show that \emph{(strong) generation} for the \emph{singularity category}, as  introduced in \cite{Buchweitz/Appendix:2021,Orlov:2004}, of a (Noetherian separated) scheme is a \emph{Zariski local} question.
Secondly, as the techniques involved in proving these types of statements are `standard' at this point \cite{Neeman:2021b,Aoki:2021,Lank:2024,DeDeyn/Lank/ManaliRahul:2024a,DeDeyn/Lank/ManaliRahul:2024b,Stevenson:2025} and are quite formal, it felt like a pleasant exercise in abstraction to make these explicit.
We do this below in our main result, \Cref{thm:generators_presheaf}, which is stated in the language of `presheaves of triangulated categories'.
Instead of unveiling the theorem here (and all the definitions that are needed to formulate the statement), we simply state the main corollaries. 

\begin{introcor}[\Cref{cor:descent_Dsg,cor:lank2024,cor:Neeman2021}]
    Let $X$ be a separated Noetherian scheme, suppose $\mathcal{T}(-)$ is either $\operatorname{Perf}(-)$ (:= the category of perfect complexes), $D^b_{\operatorname{coh}}(-)$ (:= the derived category of complexes with bounded and coherent cohomology) or $D_{\operatorname{sg}}(-)$ (:= the Verdier quotient of the latter by the former), and let $X=U\cup V$ be an open cover.
    Then $\mathcal{T}(X)$ admits a strong (resp.\ classical) generator if and only if both $\mathcal{T}(U)$ and $\mathcal{T}(V)$ admit strong (resp.\ classical) generators.
\end{introcor}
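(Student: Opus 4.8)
The plan is to obtain the corollary as an instance of \Cref{thm:generators_presheaf}, by verifying its hypotheses for each of the three assignments $W\mapsto\operatorname{Perf}(W)$, $W\mapsto D^b_{\operatorname{coh}}(W)$ and $W\mapsto D_{\operatorname{sg}}(W)$, regarded as presheaves of triangulated categories on the poset of open subschemes of $X$; granting the hypotheses, the statement is then just \Cref{thm:generators_presheaf} applied to the two-element cover $\{U,V\}$, and the classical-generation variant is the same argument with the bounds on the number of cones dropped. So essentially everything reduces to checking, for each of the three presheaves, \emph{(a)} Zariski descent — that the square attached to $X=U\cup V$ is a homotopy pullback, equivalently that a Mayer--Vietoris triangle holds — and \emph{(b)} the compact/coherent boundedness condition appearing in \Cref{thm:generators_presheaf}.

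For \emph{(a)}: descent for $\operatorname{Perf}(-)$ is Thomason--Trobaugh, the localization sequence $\operatorname{Perf}_{X\setminus U}(X)\to\operatorname{Perf}(X)\to\operatorname{Perf}(U)$ — an equivalence onto $\operatorname{Perf}(U)$ up to direct summands — yielding the Mayer--Vietoris square for $X=U\cup V$. Descent for $D_{\operatorname{qc}}(-)$ is classical, and it restricts to the bounded coherent level using that $X$ is Noetherian and separated (so every open admits a finite affine cover with affine pairwise intersections, and $U\cap V$ is quasi-compact) together with Neeman's description of $D^b_{\operatorname{coh}}(U)$ as the idempotent completion of $D^b_{\operatorname{coh}}(X)/D^b_{\operatorname{coh},X\setminus U}(X)$. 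For $D_{\operatorname{sg}}(-)$ one observes that $\operatorname{Perf}(-)\hookrightarrow D^b_{\operatorname{coh}}(-)$ is a morphism of presheaves whose source and target both satisfy descent, and deduces descent for the levelwise Verdier quotient; I expect this step — that the Mayer--Vietoris square survives passage to the singularity category — to be the real content of the corollary, and I would isolate it as a lemma to the effect that a homotopy-pullback square of triangulated categories equipped with a compatible homotopy-pullback square of thick subcategories induces a homotopy-pullback square of Verdier quotients.

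For \emph{(b)}: the compact/coherent boundedness condition should follow from the same finiteness input, namely that $X$, and hence each open in the cover, is Noetherian and separated. This bounds the cohomological amplitude of pushforward along the qcqs open immersions $U\cap V\hookrightarrow U$, $U\cap V\hookrightarrow V$ (and $U,V\hookrightarrow X$) by the size of a finite affine cover, which is exactly the uniformity one needs to keep the number of cones under control when reassembling an object of $\mathcal{T}(X)$ from its restrictions via the Mayer--Vietoris triangle. With \emph{(a)} and \emph{(b)} in hand, \Cref{thm:generators_presheaf} gives the claimed equivalence. The remaining ingredients it draws on — that restriction along an open immersion preserves strong generation (so that $\mathcal{T}(U\cap V)$ is automatically strongly generated once $\mathcal{T}(U)$ is) and that idempotent completion changes generation time by a bounded amount — are by now routine, and I would cite rather than reprove them.
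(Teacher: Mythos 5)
Your overall plan---deduce the corollary from \Cref{thm:generators_presheaf} applied to the cover $X=U\cup V$---is the right one, but the way you propose to verify the hypotheses misses the actual mechanism of the theorem. The machinery is \emph{not} applied to $\operatorname{Perf}(-)$, $D^b_{\operatorname{coh}}(-)$ or $D_{\operatorname{sg}}(-)$ viewed as presheaves satisfying descent; it is applied to the single compactly generated presheaf $\mathcal{T}=D_{\operatorname{qc}}$, with the three categories of interest appearing as $\mathcal{S}=\mathcal{T}^c$, $\mathcal{T}^b_c$ and $\mathcal{T}^b_c/\mathcal{T}^c$. This is forced: the reassembly step produces a global generator by \emph{pushing forward} the local generators along $i$, $j$, $k$ and feeding them into the Mayer--Vietoris triangle $E\to i_\ast i^\ast E\oplus j_\ast j^\ast E\to k_\ast k^\ast E$, and the small categories admit no such pushforwards ($\mathbb{R}i_\ast$ of a perfect, or bounded coherent, complex on $U$ is in general neither perfect nor bounded coherent on $X$). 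Your item \emph{(a)}, descent/homotopy-pullback squares for the small presheaves, is true but is not what the proof consumes; what it consumes is \Cref{lem:MVtriangle} in $D_{\operatorname{qc}}$ together with \Cref{hyp:4} (essential density of restriction on compacts and on $D^b_{\operatorname{coh}}$, i.e.\ Thomason--Trobaugh and Chen/Elagin--Lunts--Schn\"urer). Relatedly, your item \emph{(b)} mischaracterizes the boundedness condition as a bound on the cohomological amplitude of $\mathbb{R}i_\ast$. Finite amplitude is nowhere near enough: what is needed is that $\mathbb{R}i_\ast$ of a perfect complex lies in $\overline{\langle H\rangle}_n$ for a single perfect complex $H$ on $X$ with $n$ \emph{uniform} in the input (strong compact boundedness, which is Neeman's \cite[Theorem 6.2]{Neeman:2021b} plus a projection-formula trick), and the analogous coherent boundedness, which is Aoki's theorem. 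These are the deep inputs of the corollary and cannot be dismissed as finiteness bookkeeping.

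For the $D_{\operatorname{sg}}$ case there is a second gap. Passing a homotopy-pullback square to levelwise Verdier quotients, even granting it, would at best give a qualitative descent statement; the \emph{strong} generation claim requires lifting the relation $q(i^\ast E)\in\langle q(G_U)\rangle_{n}$ in $D_{\operatorname{sg}}(U)$ to a relation $i^\ast E\in\langle G_U\oplus C_U\rangle_{6n-3}$ in $D^b_{\operatorname{coh}}(U)$ with an explicit bound and with $C_U$ a \emph{single} perfect complex (so that its pushforward can be controlled by strong compact boundedness). This quantitative lifting along the quotient is \Cref{lem:finitely_build_a_la_verdier_quotient_refined} in the paper and is a genuine ingredient your sketch omits; without it the count of cones for the putative strong generator of $D_{\operatorname{sg}}(X)$ is not under control. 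The pieces you do defer to citation (restriction preserves strong generation, which here follows from essential density; finite induction over the cover) are indeed routine.
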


In addition, in \Cref{sec:variations} we discuss some variations: a `big' version of \Cref{thm:generators_presheaf}, how one can tackle descent for more `exotic' Grothendieck topologies than the Zariski topology, and we give an application to strong generation of algebraic stacks.
More precisely, we show that strong generation of the bounded derived category of a (separated Noetherian concentrated) algebraic stack can be checked along a `nice enough' cover, see \Cref{subsec:stacks} for an explanation of some of the terminology in the next statement (but note that a tame stack is always concentrated). 

\begin{introthm}[{\Cref{thm:quasi-DM_strong_generation}}]
    Let $\mathcal{X}$ be a separated Noetherian concentrated algebraic stack.
    Assume there exists a surjective, separated, finitely presented, quasi-finite flat morphism $U\to \mathcal{X}$ from a scheme such that $D^b_{\operatorname{coh}}(U)$ admits a strong (resp.\ classical) generator. 
    Then $D^b_{\operatorname{coh}}(\mathcal{X})$ admits a strong (resp.\ classical) generator.
\end{introthm}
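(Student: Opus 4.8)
The plan is to deduce the statement from the ``big'' incarnation of \Cref{thm:generators_presheaf} discussed in \Cref{sec:variations}, applied to the assignment $D_{\operatorname{qc}}(-)$ for the Grothendieck topology generated by the single cover $f\colon U\to\mathcal X$. Concretely, one must verify that $f$ is a morphism to which that formalism applies, and there are three points: (i) the adjoint pair $f^{*}\dashv Rf_{*}$ is well behaved, meaning $f^{*}$ is exact and preserves perfect complexes, $Rf_{*}$ has finite cohomological amplitude and preserves small coproducts, and the projection formula holds; (ii) $D_{\operatorname{qc}}(\mathcal X)$ and $D_{\operatorname{qc}}(U)$ are compactly generated, so that the ``compact/coherent boundedness'' condition on $D_{\operatorname{qc}}$ detects strong (resp.\ classical) generation of $D^{b}_{\operatorname{coh}}$; and (iii) $f$ is \emph{descendable}. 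Point (ii) is exactly what concentratedness of $\mathcal X$ and $U$ together with Neeman-style approximation provide, and it is the content packaged in \Cref{thm:generators_presheaf}; once (i) and (iii) are in place, the theorem produces a strong (resp.\ classical) generator of $D^{b}_{\operatorname{coh}}(\mathcal X)$ out of one for $D^{b}_{\operatorname{coh}}(U)$. Note that only this one direction is claimed, so no descent of generation back along $f^{*}$ is needed.

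For (i), I would invoke Zariski's Main Theorem in its form for algebraic stacks to factor $f$ as $U\xrightarrow{j}\overline U\xrightarrow{\overline f}\mathcal X$ with $j$ a quasi-compact open immersion and $\overline f$ finite. Then $Rf_{*}=\overline f_{*}\circ Rj_{*}$ is a composite of a functor of cohomological amplitude $0$ with one of finite amplitude (bounded by the codimension of the boundary), and both preserve small coproducts over our Noetherian stacks; flatness of $f$ gives exactness of $f^{*}$, which preserves perfect complexes since $Rf_{*}$ preserves coproducts; and the projection formula holds because $f$ is concentrated. None of this is difficult.

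The crux is (iii): the commutative algebra object $Rf_{*}\mathcal O_{U}\in D_{\operatorname{qc}}(\mathcal X)$ is descendable, i.e.\ the fibre of the unit $\mathcal O_{\mathcal X}\to Rf_{*}\mathcal O_{U}$ is $\otimes$-nilpotent with a uniform index. Surjectivity and flatness make $f$ an fppf cover, but a general fppf cover is not descendable (with $\mathbb A^{1}\to\operatorname{Spec}k$ the standard failure); it is precisely quasi-finiteness that forces the cobar tower to terminate after finitely many stages. I would prove this by Noetherian induction on $\mathcal X$: by generic flatness there is a dense open $\mathcal V\subseteq\mathcal X$ over which $f$ becomes finite locally free, hence descendable over $\mathcal V$ by Mathew's theorem that finite locally free algebras are descendable, while descendability over the closed complement follows from the inductive hypothesis; one then glues along the stratification, descendability being stable under such gluings. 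Alternatively, one argues directly from the factorization above, using a Koszul/local-cohomology resolution to see that $j$ contributes only boundedly many steps while $\overline f$, being finite, contributes none. This is the step I expect to be the genuine obstacle; everything else is the formal machinery already established.

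With (i)--(iii) in hand, \Cref{thm:generators_presheaf} in its big, non-Zariski form applies and the conclusion unwinds as follows. For $E\in D^{b}_{\operatorname{coh}}(\mathcal X)$, descendability of $Rf_{*}\mathcal O_{U}$ gives $E\in\langle Rf_{*}f^{*}E\rangle_{n}$ for a fixed $n$ (using the projection formula $Rf_{*}f^{*}E\simeq E\otimes Rf_{*}\mathcal O_{U}$); since $f^{*}$ preserves $D^{b}_{\operatorname{coh}}$ and $G$ is a strong generator of $D^{b}_{\operatorname{coh}}(U)$, we have $f^{*}E\in\langle G\rangle_{m}$ with $m$ independent of $E$, hence $Rf_{*}f^{*}E\in\langle Rf_{*}G\rangle_{m}$ and therefore $D^{b}_{\operatorname{coh}}(\mathcal X)\subseteq\langle Rf_{*}G\rangle_{N}$ for $N$ depending only on $m$ and $n$. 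The object $Rf_{*}G$ need not be perfect, but the approximation step built into \Cref{thm:generators_presheaf}, which is where concentratedness of $\mathcal X$ enters, upgrades it to a perfect complex at the cost of worsening the bound, so $D^{b}_{\operatorname{coh}}(\mathcal X)$ admits a strong generator. Discarding every numerical bound in this argument yields the statement for classical generators.
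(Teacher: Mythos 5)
Your route is genuinely different from the one in the paper: you propose to handle the single cover $f\colon U\to\mathcal X$ in one shot via descendability of $\mathbb{R}f_*\mathcal O_U$ (the mechanism of \Cref{subsec:topologies}, i.e.\ Aoki's argument), whereas the paper never invokes descendability here. Instead it runs the Hall--Rydh quasi-finite flat d\'evissage \cite[Theorem D']{Hall/Rydh:2018}: it reduces to three elementary moves (\'etale base change; finite surjective descent, handled by \cite[Theorem 6.2]{Hall/Lamarche/Lank/Peng:2025}; and gluing along an \'etale neighbourhood, handled by \Cref{thm:big_generators_presheaf} together with the Mayer--Vietoris triangle of \cite[Example 5.6]{Hall/Rydh:2017} and the coherent boundedness established in \Cref{lem:coherent_boundedness_stacks}). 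The d\'evissage route only ever needs descendability-free inputs, which is precisely why the paper can avoid your ``crux''.

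And that crux is a genuine gap. Descendability of $\mathbb{R}f_*\mathcal O_U$ for a surjective, separated, finitely presented, quasi-finite flat $f$ onto a concentrated Noetherian \emph{stack} is not in the literature, and your sketch does not close it: the Noetherian-induction-plus-stratification argument is exactly the Bhatt--Scholze proof of \cite[Proposition 11.25]{Bhatt/Scholze:2017} for \emph{schemes}, and transporting it to stacks requires controlling the cohomological dimension of the stabilizers (this is where concentratedness must enter, and it does not appear in your sketch at all). Your motivating claim is also false: $\mathbb A^1\to\operatorname{Spec}k$ \emph{is} descendable (the unit $k\to k[x]$ splits, so $\xi=0$ in \Cref{lem:descendable_object}), and indeed every fppf, even every h, cover of Noetherian schemes is descendable by \Cref{ex:hcover}; quasi-finiteness is not the relevant hypothesis for schemes, so the intuition driving your proof of the key step is off. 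There are secondary gaps as well: feeding a strong generator of $D^b_{\operatorname{coh}}(U)$ into the big-category statement of \Cref{subsec:topologies} requires knowing $D_{\operatorname{qc}}(U)=\overline{\langle G\rangle}_r$, which you do not address; the objects $J^{\otimes i}\otimes E$ appearing in \Cref{prop:aoki_descent} are not coherent (the fibre $J$ of the unit is not in $D^b_{\operatorname{coh}}$ for a non-finite $f$), so the claimed uniform bound $f^*(\,\cdot\,)\in\langle G\rangle_m$ does not apply to them directly; and the final ``upgrade to a perfect complex'' should be an upgrade to a \emph{bounded coherent} complex via coherent boundedness of $\mathbb{R}f_*$, which for stacks is itself the content of \Cref{lem:coherent_boundedness_stacks} and needs Rydh's covering theorem rather than being formal.
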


Let us briefly spend a few words emphasizing the main ideas/tools behind the proof of \Cref{thm:generators_presheaf}.
To formalize Zariski descent of strong generation we took inspiration from \cite{Hall/Rydh:2017}, and prove descent of strong generation for presheaves of triangulated categories along suitably behaved Cartesian squares. 
The two main puzzle pieces are
\begin{itemize}
    \item the existence of Mayer--Vietoris triangles, which allows one relate the global to the local and
    \item a way to control the `local terms' in the previous triangles.
\end{itemize}
The latter is achieved by requiring certain functors associated to the presheaf to be `(strongly) compactly or coherently bounded', see \Cref{sec:boundedness}.
This formalizes the key ingredients in the previous proofs in the literature.
All in all, the above allows one to use local (strong) generators to obtain global ones.

That compact generation can be checked locally (in more than just the Zariski topology) is a well-established fact \cite{Neeman:1996,Rouquier:2008,Toen:2012,Hall/Rydh:2017}.
It is only relatively recently that strong (or classical) generation of the bounded or perfect derived category has been studied in a like manner.
In fact, it is the (strong) compact/coherent boundedness which is crucial for proving \emph{strong} generation statements.

Lastly, we should mention that our choice of using presheaves of triangulated categories to formalize Zariski descent was somewhat arbitrary---and mainly chosen for the algebro-geometric biases in our minds. 
The approach with cocovering taken in \cite{Rouquier:2008} or with formal Mayer--Vietoris squares in \cite{Balmer/Favi:2007} are equally valid ways of proceeding (except for the stacky part in \Cref{subsec:stacks}).

The note proceeds as follows.
In \Cref{sec:recol} we recall some preliminary definitions after which the `presheaves of triangulated categories' are introduced in \Cref{sec:presheaf_tricats}.
Next in \Cref{sec:boundedness} we discuss the various crucial boundedness conditions needed to formalize Zariski descent in \Cref{sec:Zariski}. 
Lastly, in \Cref{sec:variations}, we discuss some generalizations: `big' versions, how to deal with more exotic topologies and an application to strong generation of algebraic stacks.

\begin{disclaimer}
    For simplicity we added a Noetherianity assumption in some places.
    In many cases this can be removed and one only requires quasi-compactness and (quasi-)separatedness; either assuming the scheme (or stack) is coherent (i.e.\ has coherent structure sheaf), or by  looking at the bounded pseudo-coherent objects instead of the bounded derived category of complexes with coherent cohomology.
    
\end{disclaimer}

\begin{ack}
    Timothy De Deyn was supported by ERC Consolidator Grant 101001227 (MMiMMa). Pat Lank and Kabeer Manali Rahul were supported by ERC Advanced Grant 101095900-TriCatApp. Kabeer Manali Rahul was also supported by an Australian Government Research Training Program Scholarship. 
    The authors would like to thank both Jack Hall and Fei Peng for helpful discussions concerning algebraic stacks and Greg Stevenson for pointing out the reference \cite{Simson:1977}.
\end{ack}

\section{Triangular recollections}
\label{sec:recol}

We briefly recall a few concepts needed below, and more importantly we fix notation used throughout. 
In this section, $\mathsf{T}$ denotes a triangulated category with shift functor $[1]$.

We begin by recalling some notation and definitions related to generation for triangulated categories introduced in \cite{Bondal/VandenBergh:2003, Rouquier:2008}.
Let $\mathsf{S}\subseteq\mathsf{T}$ be a (not-necessarily triangulated) full subcategory and let\footnote{The conventions for what is meant by $\operatorname{add}$ are not uniform in the literature \cite{Bondal/VandenBergh:2003,Rouquier:2008,Neeman:2021b} and our convention mixes this a bit more. 
The primary difference is whether or not one allows for $\operatorname{add}$ to be closed under direct summands and/or shifts.
A similar remark holds for $\operatorname{Add}$ defined below.
} $\operatorname{add}\mathsf{S}$ denote the smallest (strictly) full subcategory of $\mathsf{T}$ containing $\mathsf{S}$ closed under shifts, finite coproducts and direct summands.
Inductively, define 
\begin{displaymath}
    \langle\mathsf{S}\rangle_n :=
    \begin{cases}
        \operatorname{add}\varnothing & \text{if }n=0, \\
        \operatorname{add}\mathsf{S} & \text{if }n=1, \\
        \operatorname{add}\{ \operatorname{cone}\phi \mid \phi \in \operatorname{Hom}(\langle \mathsf{S} \rangle_{n-1}, \langle \mathsf{S} \rangle_1) \} & \text{if }n>1.
    \end{cases}
\end{displaymath}
Moreover, let 
\begin{displaymath}
\langle\mathsf{S}\rangle:=\langle\mathsf{S}\rangle_\infty:=\bigcup_{n\geq 0} \langle\mathsf{S}\rangle_n.
\end{displaymath}
This is the smallest \textbf{thick}, i.e.\ closed under direct summands, triangulated subcategory containing $\mathsf{S}$.
Similarly, if $\mathsf{T}$ admits small coproducts, define $\operatorname{Add}\mathsf{S}$ exactly as $\operatorname{add}\mathsf{S}$ but additionally closed under arbitrary small coproducts, and $\overline{\langle \mathsf{S}\rangle}_n$ as above, replacing $\operatorname{add}$ by $\operatorname{Add}$ everywhere.
Furthermore, when $\mathsf{S}=\{ G\}$ consists of a single object, we leave out the brackets, simply writing $\langle G\rangle$, etc.

We say an object $G\in\mathsf{T}$ is a \textbf{classical generator} if $\langle G\rangle = \mathsf{T}$, a \textbf{strong generator} if $\langle G\rangle_{n+1} = \mathsf{T}$ for some $n\geq 0$ and a \textbf{strong $\oplus$-generator} if $\overline{\langle G\rangle}_{n+1} =\mathsf{T}$ for some $n\geq 0$. 
The \textbf{Rouquier dimension} of $\mathsf{T}$, denoted $\dim \mathsf{T}$, is the smallest integer $n$ such that $\mathsf{T} = \langle G \rangle_{n+1}$ for some object $G$ in $\mathsf{T}$, and is $\infty$ when no such objects exists. 

Next, recall from \cite{Beilinson/Berstein/Deligne/Gabber:2018} that pair of strictly full subcategories $\tau=(\mathsf{T}^{\leq 0},\mathsf{T}^{\geq 0})$ of $\mathsf{T}$ is called a \textbf{$t$-structure} if the following conditions are satisfied:
\begin{enumerate}
    \item $\operatorname{Hom}(\mathsf{T}^{\leq 0},\mathsf{T}^{\geq 0}[-1])=0$,
    \item $\mathsf{T}^{\leq 0}[1]\subseteq \mathsf{T}^{\leq 0}$ and $\mathsf{T}^{\geq 0}[-1]\subseteq \mathsf{T}^{\geq 0}$,
    \item for any $E\in\mathsf{T}$, there exists a distinguished triangle
    \begin{displaymath}
        A \to E \to B \to A[1]
    \end{displaymath}
    with $A\in\mathsf{T}^{\leq 0}$ and $B\in\mathsf{T}^{\geq 0}[-1]$.
\end{enumerate}

Suppose $(\mathsf{T}^{\leq 0}, \mathsf{T}^{\geq 0})$ is a $t$-structure.
For any integer $n$ define $\mathsf{T}^{\leq n}:= \mathsf{T}^{\leq 0}[-n]$ and $\mathsf{T}^{\geq n}:= \mathsf{T}^{\geq 0}[-n]$.
The pair $(\mathsf{T}^{\leq n}, \mathsf{T}^{\geq n})$ is also a $t$-structure on $\mathsf{T}$.
Moreover, we use the following notation
\begin{displaymath}
    \mathsf{T}^{-}:= \bigcup_{n=0}^\infty \mathsf{T}^{\leq n},\quad \mathsf{T}^{+} := \bigcup^\infty_{n=0} \mathsf{T}^{\geq -n},\quad \mathsf{T}^b := \mathsf{T}^{-} \bigcap \mathsf{T}^{+}.
\end{displaymath}

Furthermore, suppose $\mathsf{T}$ admits small coproducts and let $\mathsf{T}^c$ denote the full subcategory of \textbf{compact objects}, i.e.\ those $E\in\mathsf{T}$ for which $\operatorname{Hom}(E,-)$ commutes with coproducts.
We denote by $\mathsf{T}_c^-$ the full subcategory consisting of those objects $E\in\mathsf{T}$ which admit, for every integer $n>0$, a distinguished triangle $D\to E \to F \to E[1]$ with $D\in\mathsf{T}^c$ and $F\in\mathsf{T}^{\leq -n}$. Further, put $\mathsf{T}^b_c:=\mathsf{T}_c^- \cap \mathsf{T}^b$.
Lastly, recall that a triangulated functor $F:\mathsf{T}\to \mathsf{S}$ between triangulated categories equipped with $t$-structures is called \textbf{$t$-exact} if 
$F(\mathsf{T}^{\leq 0})\subseteq \mathsf{S}^{\leq 0}$ and $F(\mathsf{T}^{\geq 0})\subseteq \mathsf{S}^{\geq 0}$.

\section{Presheaves of triangulated categories}
\label{sec:presheaf_tricats}

It is time that the main protagonists enter the stage. 
These are the `presheaves of triangulated categories'---often further along equipped with extra structures.
Denote the $2$-category of triangulated categories by $\operatorname{t\mathbf{Cat}}$.

\begin{definition}
    Let $\mathcal{C}$ be a small category (usually admitting all finite limits).
    A \textbf{$\mathcal{C}$-presheaf of triangulated categories} is a 2-functor\footnote{Throughout, we use $\mathcal{T}$ to refer to a presheaf of triangulate categories and $\mathsf{T}$ to refer to a single one.}
    \begin{displaymath}
        \mathcal{T}\colon \mathcal{C}^{\operatorname{op}} \to \operatorname{t\mathbf{Cat}}.
    \end{displaymath}
    For any morphism $f\colon U \to V$ in $\mathcal{C}$, there is an induced triangulated functor $f^\ast_\mathcal{T} \colon \mathcal{T}(V) \to \mathcal{T}(U)$ referred to as the \textbf{pullback along $f$}. 
    Moreover, define the full subcategory
    \begin{displaymath}
        \mathcal{T}_{V\setminus U}(V):=\ker(f^\ast_\mathcal{T})=\{ F\in \mathcal{T}(V)\mid f^\ast_\mathcal{T}F=0\} 
    \end{displaymath}
    to be the kernel of $f^\ast_\mathcal{T}$.
\end{definition}

When no confusion can occur we omit the subscripts in notation for the pullbacks of a presheaf of triangulated categories.

\begin{example}\label{ex:presheaves_on_Zariski_site}
    There are many examples to be found in algebraic geometry.
    Before giving the easiest, let us recall some common notation: 
    For a scheme $X$, $D_{\operatorname{qc}}(X)$ denotes the derived category of complexes of $\mathcal{O}_X$-modules with quasi-coherent cohomology, $D^b_{\operatorname{coh}}(X)$ denotes the subcategory of those complexes with bounded and coherent cohomology and $\operatorname{Perf}(X)$ denotes the subcategory consisting of perfect complexes (which equals $D_{\operatorname{qc}}(X)^c$ when $X$ is quasi-compact and quasi-separated).
    Lastly, let $D_{\operatorname{sg}}(X)$ denote the Verdier quotient $D^b_{\operatorname{coh}}(X)/\operatorname{Perf}(X)$. 
    
    Consider $X_{Zar}$, the small Zariski site of a scheme $X$  (see \cite[\href{https://stacks.math.columbia.edu/tag/020T}{Tag 020T}]{stacks-project}). 
    It is the category with as objects the open subsets of $X$ and as morphisms the inclusions.    
    The following define presheaves of triangulated categories on $X_{Zar}$ (we are using that open immersions are flat):
    \begin{displaymath}
    D_{\operatorname{qc}},\quad D^b_{\operatorname{coh}},\quad \operatorname{Perf}, \quad\text{and}\quad D_{\operatorname{sg}}.
    \end{displaymath}
    These are the natural examples to have in mind for formalizing Zariski descent for generation.
\end{example}

\begin{definition}\label{def:compactly_generated_presheaf}
    Let $\mathcal{T}$ be a $\mathcal{C}$-presheaf of triangulated categories.    \begin{enumerate}
        \item We say $\mathcal{T}$ admits \textbf{adjoints} if for every $f\colon U \to V$ in $\mathcal{C}$, the pullback $f^\ast_\mathcal{T} \colon \mathcal{T}(V) \to \mathcal{T}(U)$ admits a right adjoint $f_{\mathcal{T},\ast} \colon \mathcal{T}(U) \to \mathcal{T}(V)$ (referred to as the \textbf{pushforward along $f$}).
        \item We say $\mathcal{T}$ is \textbf{compactly generated} if $\mathcal{T}(U)$ is compactly generated for all $U$ in $\mathcal{C}$ (and so by definition admits all small coproducts) and the pullback functors commute with small coproducts.
    \end{enumerate}
\end{definition}

Observe that it immediately follows by Brown representability \cite[Theorem 4.1]{Neeman:1996} that a compactly generated presheaf admits adjoints (as we require the pullback functors to commute with coproducts). 

Let us endow the presheaves with more structure.

\begin{definition}
    Let $\mathcal{T}$ be a $\mathcal{C}$-presheaf of triangulated categories.
    A \textbf{$t$-structure on $\mathcal{T}$} consists of a pair of subpresheaves (defined in the obvious way) $\tau=(\mathcal{T}^{\leq 0},\mathcal{T}^{\geq 0})$ of $\mathcal{T}$ such that the pair $\tau_U=(\mathcal{T}^{\leq 0}(U),\mathcal{T}^{\geq 0}(U))$ is a $t$-structure on $\mathcal{T}(U)$ for each object $U$ of $\mathcal{C}$.
\end{definition}

It's time to introduce the first of the puzzle: the `Mayer--Vietoris triangles' (see \Cref{lem:MVtriangle} below).
There are numerous ways of setting this up, we follow the approach of \cite{Hall/Rydh:2017} as this is more in-line with the presheaf approach taken here, but other approaches exist.
In particular, those of Balmer--Favi \cite{Balmer/Favi:2007} and Rouquier \cite{Rouquier:2008} are equally valid, see \Cref{rmk:cocoverings} where we make the link.

To this end, we first recall the notion of `flatness' as defined in \cite[Definition 5.1]{Hall/Rydh:2017}. 

\begin{definition}
    Let $\mathcal{T} \colon \mathcal{C}^{\operatorname{op}} \to \operatorname{t\mathbf{Cat}}$ be a presheaf of triangulated categories with adjoints. 
    A morphism $f\colon U \to V$ in $\mathcal{C}$ is called \textbf{$\mathcal{T}$-preflat} if for every Cartesian square (in $\mathcal{C}$)
    \begin{displaymath}
        \begin{tikzcd}[ampersand replacement=\&]
            {U_W} \& {W} \\
            U \& V\rlap{ ,}
            \arrow["{f_W}", from=1-1, to=1-2]
            \arrow["{g_U}"', from=1-1, to=2-1]
            \arrow["g", from=1-2, to=2-2]
            \arrow["f"', from=2-1, to=2-2]
        \end{tikzcd}
    \end{displaymath}
    the canonical base change map $f^\ast g_\ast \to (g_U)_\ast (f_W)^\ast$ is an isomorphism.
    If, additionally, for every morphism $V^\prime \to V$ the pullback $f^\prime \colon V \to V^\prime$ of $f$ is $\mathcal{T}$-preflat, we say the morphism $f$ is \textbf{$\mathcal{T}$-flat}. 
\end{definition}

\begin{example}\label{ex:T-flatness_in_AG}
    Suppose $\mathcal{C}=\operatorname{Sch}$ is the category of schemes (and we silently agree to ignore the fact that it is not small), and view the presheaf $D_{\operatorname{qc}}$ on it.
    As noted in \cite[Example 5.2]{Hall/Rydh:2017} a morphism $f\colon X\to Y$ of schemes is $D_{\operatorname{qc}}$-flat if and only if it flat.
    In particular, if we restrict to locally finitely presented morphisms, the $D_{\operatorname{qc}}$-flat monomorphisms are exactly the open immersions.
\end{example}

In fact, essentially, only $\mathcal{T}$-flat monomorphism will appear in the rest of this work.
As the above example shows, these correspond exactly to open immersions in algebraic geometry and so help formalize `Zariski descent'.
In fact, essentially, all we need is that they give us localizations after pulling back.

\begin{lemma}\label{lem:preflatmono_are_loc}
    Let $\mathcal{T}$ be a $\mathcal{C}$-presheaf of triangulated categories with adjoints. If  $i\colon U\to X$ is a $\mathcal{T}$-preflat monomorphism in $\mathcal{C}$, then the counit of adjunction $i^\ast i_\ast \to 1$ is an isomorphism.
    In particular, $i^*$ is a localization, i.e.\ we have a localization sequence
    \begin{displaymath}
        \mathcal{T}_{X\setminus U}(X)\longrightarrow \mathcal{T}(X)\xrightarrow{\  i^*\ } \mathcal{T}(U).
    \end{displaymath}
\end{lemma}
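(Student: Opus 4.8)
The plan is to reduce the statement to the standard characterization of Verdier (Bousfield) localizations, by applying $\mathcal{T}$-preflatness to a single tautological Cartesian square.

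First I would record that, since $i$ is a monomorphism, the fibre product $U\times_X U$ exists in $\mathcal{C}$ and equals $U$, with both projections equal to $\mathrm{id}_U$; equivalently, the Cartesian square from the definition of $\mathcal{T}$-preflatness may be taken with $V=X$, $W=U$, $f=g=i$, $U_W=U$ and $f_W=g_U=\mathrm{id}_U$. Feeding this square into the definition tells us that the base change map $i^\ast i_\ast \to (\mathrm{id}_U)_\ast(\mathrm{id}_U)^\ast$ is an isomorphism. Since $\mathcal{T}$ is a $2$-functor we have $(\mathrm{id}_U)^\ast = 1_{\mathcal{T}(U)} = (\mathrm{id}_U)_\ast$, so the target is canonically $1_{\mathcal{T}(U)}$; unwinding the construction of the base change map in this degenerate situation—it is obtained by transporting $f_W^\ast$ applied to the counit of $g^\ast\dashv g_\ast$ along the adjunction $(\mathrm{id}_U)^\ast\dashv(\mathrm{id}_U)_\ast$—shows that this map is precisely the counit $i^\ast i_\ast\to 1$. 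This identification is the one point that requires a little care, and I expect it to be the main (though still routine) obstacle.

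The remainder is formal. A right adjoint whose counit is an isomorphism is fully faithful, hence $i_\ast$ is fully faithful and $i^\ast$ is essentially surjective, since $F\simeq i^\ast i_\ast F$ for every $F\in\mathcal{T}(U)$. A triangulated functor admitting a fully faithful right adjoint is a Bousfield localization, so $i^\ast$ induces an equivalence $\mathcal{T}(X)/\ker(i^\ast)\xrightarrow{\sim}\mathcal{T}(U)$; as $\ker(i^\ast)=\mathcal{T}_{X\setminus U}(X)$ by definition and is automatically thick, we obtain the advertised localization sequence $\mathcal{T}_{X\setminus U}(X)\to\mathcal{T}(X)\xrightarrow{i^\ast}\mathcal{T}(U)$.
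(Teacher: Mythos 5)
Your proof is correct and follows essentially the same route as the paper: both apply $\mathcal{T}$-preflatness to the tautological Cartesian square coming from $i$ being a monomorphism (so $U\times_X U=U$ with identity projections) to identify $i^\ast i_\ast$ with the identity, and then deduce the localization sequence formally. The extra care you take in identifying the base change map with the counit, and in spelling out why a fully faithful right adjoint yields a Bousfield localization, is sound and merely makes explicit what the paper leaves implicit.
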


\begin{proof}
    This is \cite[Lemma 5.4]{Hall/Rydh:2017}, we include the its short proof for convenience of the reader. 
    As $i$ is a monomorphism the square
    \begin{displaymath}
        \begin{tikzcd}[ampersand replacement=\&, sep=1.5em ]
            {U} \& {U} \\
            U \& X
            \arrow["{1}", from=1-1, to=1-2]
            \arrow["{1}"', from=1-1, to=2-1]
            \arrow["i", from=1-2, to=2-2]
            \arrow["i"', from=2-1, to=2-2]
        \end{tikzcd}
    \end{displaymath}
    is Cartesian, so by preflatness, $j^\ast j_\ast = 1_{\ast} 1^\ast =1$.    
\end{proof}

The following is a more restrictive variation of \cite[Definition 5.5]{Hall/Rydh:2017} that suits our purposes, see also \Cref{rmk:MVsquare} below. 

\begin{definition}\label{def:ZMV_square}
    Let $\mathcal{T}$ be a $\mathcal{C}$-presheaf of triangulated categories with adjoints. A \textbf{Zariski $\mathcal{T}$-square} is a Cartesian square (in $\mathcal{C}$)
    \begin{displaymath}
        \begin{tikzcd}[ampersand replacement=\&]
            {W} \& {V} \\
            U \& X
            \arrow[from=1-1, to=1-2]
            \arrow[from=1-1, to=2-1]
            \arrow["j", from=1-2, to=2-2]
            \arrow["i"', from=2-1, to=2-2]
        \end{tikzcd}
    \end{displaymath}
    satisfying:
    \begin{enumerate}
        \item\label{item:ZMV_square1} the morphisms $i$ and $j$ are $\mathcal{T}$-flat monomorphism,
        \item\label{item:ZMV_square2} the intersection $\mathcal{T}_{X\setminus U}(X)\cap \mathcal{T}_{X\setminus V}(X) = 0$.
    \end{enumerate}
\end{definition}

\begin{remark}\label{rmk:MVsquare}
    In \Cref{def:ZMV_square} \eqref{item:ZMV_square1} we added the assumption that \emph{both} $i$ and $j$ are $\mathcal{T}$-monomorphism instead of requiring only one of them to be as in \cite{Hall/Rydh:2017}.
    Moreover, the condition, in loc.\ cit., that $j^\ast \colon \mathcal{T}_{X\setminus U}(X) \to \mathcal{T}_{V \setminus W}(V)$ is an equivalence was replaced by our condition \eqref{item:ZMV_square2}.
    It follows from \Cref{lem:preflatmono_are_loc} that, under the assumptions on $i$ and $j$, these are equivalent.
    Morally, c.f.\ \Cref{ex:T-flatness_in_AG}, we are restricting \emph{both} $i$ and $j$ to being open immersions with `$X=U\cup V$' (and therefore these squares are `only' capturing Zarsiki coverings).
    This is significantly more restrictive than \cite[Definition 5.5]{Hall/Rydh:2017}, which is why we simply call them `Zariski squares'.
\end{remark}

\begin{remark}\label{rmk:cocoverings}
    Our \Cref{def:ZMV_square} implies that $\{\mathcal{T}_{X\setminus U}(X),\allowbreak \mathcal{T}_{X\setminus V}(X) \}$ is a cocovering of $\mathcal{T}(X)$ in the sense of \cite[\S 5.3.3]{Rouquier:2008}; that the subcategories properly intersect follows from $\mathcal{T}$-(pre)flatness and \cite[Lemma 5.7. 2]{Rouquier:2008} (that these subcategories are `Bousfield' follows from \Cref{lem:preflatmono_are_loc}).
    Moreover, $\{\mathcal{T}_{X\setminus U}(X),\allowbreak \mathcal{T}_{X\setminus V}(X) \}$ also gives a formal Mayer--Vietoris square of $\mathcal{T}(X)$ in the sense of \cite[Definition 2.1]{Balmer/Favi:2007} (this follows from the intersection being zero and them intersecting properly). 
\end{remark}

We finish this section by showing that the above notion yields the familiar Mayer--Vietoris triangles.

\begin{lemma}\label{lem:MVtriangle}
    Let $\mathcal{T}$ be a $\mathcal{C}$-presheaf of triangulated categories with adjoints. Given a Zariski $\mathcal{T}$-square in $\mathcal{C}$
    \begin{displaymath}
        \begin{tikzcd}
            {W} & V \\
            U & X
            \arrow[from=1-1, to=1-2]
            \arrow[from=1-1, to=2-1]
            \arrow["j", from=1-2, to=2-2]
            \arrow["i"', from=2-1, to=2-2]
            \arrow["k"{description}, from=1-1, to=2-2]
        \end{tikzcd}
    \end{displaymath}
    there exists, for any $N\in\mathcal{T}(X)$, a (functorial) distinguished triangle
    \begin{equation}\label{eq:MVtriangle}
        N \to  
            i_\ast i^\ast N \oplus j_\ast j^\ast N \to   
            k_\ast k^\ast N \to N[1].
    \end{equation}
\end{lemma}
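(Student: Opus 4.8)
The plan is to construct the Mayer--Vietoris triangle by glueing the two localization triangles coming from the open immersions $i$ and $j$, using the vanishing condition \eqref{item:ZMV_square2} to identify the `third term' with $k_\ast k^\ast N$. First I would invoke \Cref{lem:preflatmono_are_loc} for both $i$ and $j$: this gives localization sequences $\mathcal{T}_{X\setminus U}(X)\to\mathcal{T}(X)\xrightarrow{i^\ast}\mathcal{T}(U)$ and $\mathcal{T}_{X\setminus V}(X)\to\mathcal{T}(X)\xrightarrow{j^\ast}\mathcal{T}(V)$, and in particular the counits $i^\ast i_\ast\to 1$ and $j^\ast j_\ast\to 1$ are isomorphisms, while the units $1\to i_\ast i^\ast$ and $1\to j_\ast j^\ast$ fit into distinguished triangles $C_U(N)\to N\to i_\ast i^\ast N\to C_U(N)[1]$ and $C_V(N)\to N\to j_\ast j^\ast N\to C_V(N)[1]$ with $C_U(N)\in\mathcal{T}_{X\setminus U}(X)$ and $C_V(N)\in\mathcal{T}_{X\setminus V}(X)$ (these are the `acyclization' functors of the Bousfield localizations).

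Next I would form the canonical map $N\to i_\ast i^\ast N\oplus j_\ast j^\ast N$ and take its cocone; call the resulting triangle $P(N)\to N\to i_\ast i^\ast N\oplus j_\ast j^\ast N\to P(N)[1]$. Using the octahedral axiom (or a direct diagram chase with the two triangles above), one sees that $P(N)$ sits in a distinguished triangle relating it to $C_U(N)$ and $C_V(N)$; more precisely the natural map $P(N)\to C_U(N)$ (resp.\ $\to C_V(N)$) induced by projection has cocone $C_V(N)$ (resp.\ $C_U(N)$). The point is then to compute $P(N)$ another way. I would apply the base-change/preflatness hypothesis: since the square is Cartesian and $i$, $j$ are $\mathcal{T}$-flat, we have $k_\ast k^\ast\cong i_\ast i^\ast j_\ast j^\ast\cong j_\ast j^\ast i_\ast i^\ast$ (compose the two base-change isomorphisms along the square, noting $k=i\circ(\text{top})=j\circ(\text{left})$). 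Applying $i_\ast i^\ast$ to the triangle $C_V(N)\to N\to j_\ast j^\ast N\to$ and using that $i_\ast i^\ast$ is exact, one identifies the cocone of $N\to i_\ast i^\ast N\oplus j_\ast j^\ast N$. Concretely, the cleanest route: show the composite $C_U(N)\to N\to j_\ast j^\ast N$ has cocone $C_U(N)\cap\text{stuff}$ — better, show $i^\ast C_V(N)=0$ forces (via condition \eqref{item:ZMV_square2} applied to $C_V(N)$, which lies in $\mathcal{T}_{X\setminus V}(X)$, being supported away from $V$) that... actually the vanishing I want is that the canonical map from the cocone $P(N)$ into, say, $i_\ast i^\ast N$ realizes $P(N)$ as $C_U(N)$ twisted: the key consequence of $\mathcal{T}_{X\setminus U}(X)\cap\mathcal{T}_{X\setminus V}(X)=0$ is that $C_U(j_\ast j^\ast N)=0$, i.e.\ $j_\ast j^\ast N$ already lies in the image of $i_\ast i^\ast$ up to the relevant localization, equivalently the natural map $j_\ast j^\ast N\to i_\ast i^\ast j_\ast j^\ast N=k_\ast k^\ast N$ becomes, after the appropriate triangle, what glues things together.

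The concrete endgame I would actually write: apply the exact functor $j_\ast j^\ast$ to the unit triangle $C_U(N)\to N\to i_\ast i^\ast N\to$ to get $j_\ast j^\ast C_U(N)\to j_\ast j^\ast N\to j_\ast j^\ast i_\ast i^\ast N\to$, and note $j_\ast j^\ast i_\ast i^\ast N\cong k_\ast k^\ast N$ by the base-change isomorphism. Now $j^\ast C_U(N)\in\mathcal{T}_{V\setminus W}(V)$ and simultaneously $C_U(N)\in\mathcal{T}_{X\setminus U}(X)$; the map $\mathcal{T}_{X\setminus U}(X)\to\mathcal{T}_{V\setminus W}(V)$ is an equivalence (by \Cref{rmk:MVsquare}, equivalent to condition \eqref{item:ZMV_square2}), so $j_\ast j^\ast C_U(N)\cong C_U(N)$ via the unit. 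Hence the triangle reads $C_U(N)\to j_\ast j^\ast N\to k_\ast k^\ast N\to C_U(N)[1]$. Splicing this with $C_U(N)\to N\to i_\ast i^\ast N\to C_U(N)[1]$ via the octahedral axiom on the composite $C_U(N)\to j_\ast j^\ast N$ (or rather on $C_U(N)\to N\to i_\ast i^\ast N$ and the above) produces exactly the distinguished triangle $N\to i_\ast i^\ast N\oplus j_\ast j^\ast N\to k_\ast k^\ast N\to N[1]$, with the maps being (unit, unit) and (difference of the two counits/base-change maps). Functoriality in $N$ is automatic since every functor and natural transformation used ($i_\ast i^\ast$, $j_\ast j^\ast$, $k_\ast k^\ast$, the units, the base-change iso) is functorial, and the octahedral construction can be performed naturally here because the relevant maps are canonical.

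The main obstacle I anticipate is bookkeeping the octahedron so that the final maps come out as the symmetric-looking $(1\to i_\ast i^\ast,\ 1\to j_\ast j^\ast)$ and $(i_\ast i^\ast\to k_\ast k^\ast,\ -(j_\ast j^\ast\to k_\ast k^\ast))$ rather than some asymmetric avatar, and checking that the identification $j_\ast j^\ast C_U(N)\cong C_U(N)$ is genuinely induced by the unit (so that it is compatible with the rest of the diagram) rather than just an abstract isomorphism — this is where condition \eqref{item:ZMV_square2}, via \Cref{lem:preflatmono_are_loc} and \Cref{rmk:MVsquare}, does the essential work. Everything else is a routine manipulation of Bousfield localization triangles and base change.
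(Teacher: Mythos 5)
Your argument is correct, and it is essentially the argument the paper delegates to its references: the paper's own ``proof'' is a one-line citation of \cite[5.9 (1)]{Hall/Rydh:2017}, \cite[Proposition 5.10]{Rouquier:2008} and \cite[Lemma 2.13]{Balmer/Favi:2007}, and what you write out (two Bousfield localization triangles from \Cref{lem:preflatmono_are_loc}, base change to identify $j_\ast j^\ast i_\ast i^\ast\cong k_\ast k^\ast$, condition \eqref{item:ZMV_square2} to kill the obstruction, then the octahedron/homotopy-cartesian-square packaging) is precisely the content of those lemmas. The one step where your justification is looser than it should be is the isomorphism $C_U(N)\xrightarrow{\ \sim\ } j_\ast j^\ast C_U(N)$: knowing that $j^\ast$ restricts to an equivalence $\mathcal{T}_{X\setminus U}(X)\to\mathcal{T}_{V\setminus W}(V)$ does not by itself make the unit invertible unless one also knows $j_\ast$ lands back in $\mathcal{T}_{X\setminus U}(X)$. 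The clean argument is direct: the cone of the unit on $M:=C_U(N)$ is $C_V(M)[1]$ with $C_V(M)\in\mathcal{T}_{X\setminus V}(X)$ by construction, and applying $i^\ast$ to the triangle $C_V(M)\to M\to j_\ast j^\ast M$ together with the base-change isomorphism $i^\ast j_\ast j^\ast M\cong (p_U)_\ast k^\ast M=0$ (since $i^\ast M=0$) shows $C_V(M)$ also lies in $\mathcal{T}_{X\setminus U}(X)$, hence vanishes by \eqref{item:ZMV_square2}. With that patch, the rest of your splicing and the functoriality claim go through exactly as you describe.
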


\begin{proof}
    Taking \Cref{rmk:cocoverings} into account there are, at least, three ways of showing this. 
    Either \cite[5.9 (1)]{Hall/Rydh:2017}, \cite[Proposition 5.10]{Rouquier:2008} or \cite[Lemma 2.13]{Balmer/Favi:2007} do the trick.
\end{proof}

\section{Boundedness conditions}
\label{sec:boundedness}

Next, comes the second crucial piece of the puzzle for making the arguments work; it allows us to `bound' the image of objects in a suitable fashion.

\begin{definition}\label{def:compact_boundedness}
    Let $F \colon \mathsf{T} \to \mathsf{S}$ be an exact functor between triangulated categories admitting small coproducts (usually compactly generated).
    We say
    \begin{enumerate}
        \item $F$ is \textbf{compactly bounded} if for each compact object $G\in\mathsf{T}^c$ there exits a compact object $H\in\mathsf{S}^c$ and an integer $n=n(G)\geq 0$ with $F(G)\in\overline{\langle H \rangle}_n$,
        \item $F$ is \textbf{strongly compactly bounded} if there exists an integer $n\geq0$ such that for each compact object $G\in\mathsf{T}^c$ there exits a compact object $H\in\mathsf{S}^c$ with $F(G)\in \overline{\langle H \rangle}_n$,
    \end{enumerate}
    and, if $\mathsf{T}$ and $\mathsf{S}$ are additionally equipped with $t$-structures,
    \begin{enumerate}[resume]
        \item $F$ is \textbf{coherently bounded} if for each object $G\in\mathsf{T}^b_c$ there exits an object $H\in\mathsf{S}^b_c$ and an integer $n\geq 0$ with $F(G)\in\overline{\langle H \rangle}_n$.
    \end{enumerate}
\end{definition}

It is worthwhile to observe that the conditions in \Cref{def:compact_boundedness} are closed under composition, we use this below without mention.

\begin{example}\label{ex:of_boundedness}
    Let $X$ be quasi-compact separated scheme and $i\colon U\to X$ an open immersion with $U$ quasi-compact.
    It is shown in \cite[Theorem 6.2]{Neeman:2021b} that $\mathbb{R}i_\ast$ (as functor on $D_{\operatorname{qc}}$) is compactly bounded.
    Moreover, using the tensor structure, one can show it is even strongly compactly bounded.
    Indeed, by compact boundedness of the pushforward there exists a perfect complex $P$ over $X$ and an integer $n\geq 0$ with $\mathbb{R}i_\ast \mathcal{O}_U\in\overline{\langle P \rangle}_n$. 
    Now, for any $G\in\operatorname{Perf}(U)$, there exists a $G^\prime\in\operatorname{Perf}(X)$ with $G$ a direct summand of $i^\ast G^\prime$, see e.g.\ \Cref{ex:presheaves_on_Zariski_site_satisfies_d} below.
    Consequently, tensoring $\mathbb{R}i_\ast \mathcal{O}_U$ with
    $G^\prime$ and applying the projection formula (see e.g.\ \cite[\href{https://stacks.math.columbia.edu/tag/08EU}{Tag 08EU}]{stacks-project}), one has $\mathbb{R}i_\ast G\in\overline{\langle P\otimes^{\mathbb{L}}_{\mathcal{O}_X} G^\prime \rangle}_n$. 
    As $H:=P\otimes^{\mathbb{L}}_{\mathcal{O}_X} G^\prime \in\operatorname{Perf}(X)$ and the integer $n$ does \emph{not} depend on $G$, and hence the claim follows. 
    Another class of strongly compactly bounded functores are, of course, given by morphisms whose pushforwards preserve perfect complexes, e.g.\ finite flat morphisms.

    Concerning coherently bounded morphisms, it is shown in \cite[Theorem 3.11]{Aoki:2021} that any morphism of finite type between separated Noetherian schemes is coherently bounded.
\end{example}

\begin{nonexample}
    One cannot expect the pushforward of any morphism of finite type between, e.g.\ Noetherian, schemes to be (strongly) compactly bounded.

    Indeed, let $f\colon Y\to X$ be a proper surjective morphism from a regular variety to a singular variety (e.g.\ $f$ could be a resolution of singularities or an alteration of $X$).
    Let $G$ be a classical generator of $\operatorname{Perf}(Y)$, this generator is strong by regularity of $Y$.
    Then, by \cite[Theorem 5.1]{Aoki:2021} or \cite[Theorem C]{Dey/Lank:2024}, $\mathbb{R}f_\ast G$ is a strong generator for $D^b_{\operatorname{coh}}(X)$. 
    Suppose that $\mathbb{R}f_\ast$ is compactly bounded.
    Then there would exist a perfect complex $H$ and integer $n\geq 0$ with $\mathbb{R}f_\ast G\in\overline{\langle H \rangle}_n$. 
    Consequently, by e.g.\ \cite[Lemma 2.14]{DeDeyn/Lank/ManaliRahul:2024a}, $D^b_{\operatorname{coh}}(X)=\langle H \rangle_n$ (potentially after increasing $n$) and so this also equals $\operatorname{Perf}(X)$ (as $H$ was perfect).
    However, this implies $X$ is regular 
    which is clearly a contradiction.
\end{nonexample}

In order to give more examples of (strongly) compactly bounded functors, we show that this condition can be checked `Zariski locally'.
In particular, this implies that the pushforward of any smooth morphism of quasi-compact separated schemes is compactly bounded (see \Cref{prop:smooth_compact_bounded} below).

From now on we use the following abuse of terminology. 
For a $\mathcal{C}$-presheaf $\mathcal{T}$ with adjoints and a morphism $f\colon X\to Y$ in $\mathcal{C}$, we say $f$ is compactly/coherently bounded when $f_\ast$ is.

\begin{proposition}[Local on target]\label{prop:comp_bound_local_target}
    Let $\mathcal{T}$ be a compactly generated $\mathcal{C}$-presheaf whose pullbacks preserve compacts.
    Assume, further, that $\mathcal{C}$ admits (non-empty) finite limits and $\mathcal{T}$-flat monomorphisms are compactly bounded\footnote{One can weaken this, of course. One only needs to require the necessary Cartesian squares to exist and the $\mathcal{T}$-flat monomorphisms that appear be compactly bounded.}. 
    Given a Zariski $\mathcal{T}$-square
            \begin{displaymath}
                \begin{tikzcd}
                    {W} & V \\
                    U & Y
                    \arrow[from=1-1, to=1-2]
                    \arrow[from=1-1, to=2-1]
                    \arrow["j", from=1-2, to=2-2]
                    \arrow["i"', from=2-1, to=2-2]
                    \arrow["k"{description}, from=1-1, to=2-2]
                \end{tikzcd}
            \end{displaymath}
    and a morphism $f \colon X \to Y$ in $\mathcal{C}$.
    Consider the following Cartesian squares
    \[
        \begin{tikzcd}[ampersand replacement=\&]
        {X_U} \& X \\
        U \& Y
        \arrow["{i_X}", from=1-1, to=1-2]
        \arrow["{f_U}"', from=1-1, to=2-1]
        \arrow["f", from=1-2, to=2-2]
        \arrow["i"', from=2-1, to=2-2]
        \end{tikzcd}\qquad\text{and}\qquad
        \begin{tikzcd}[ampersand replacement=\&]
        {X_V} \& X \\
        V \& Y\rlap{ .}
        \arrow["{j_X}", from=1-1, to=1-2]
        \arrow["{f_V}"', from=1-1, to=2-1]
        \arrow["f", from=1-2, to=2-2]
        \arrow["j"', from=2-1, to=2-2]
        \end{tikzcd}
    \]
    Then $f$ is (strongly) compactly bounded if and only if $f_U$ and $f_V$ are.
\end{proposition}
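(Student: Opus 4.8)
The plan is to prove the two implications separately: the forward one (`$f$ bounded $\Rightarrow f_U,f_V$ bounded') by base change, the backward one through the Mayer--Vietoris triangle \eqref{eq:MVtriangle} of \Cref{lem:MVtriangle} on $Y$. Two facts will be used throughout. First, since $\mathcal{T}$ is compactly generated and its pullbacks preserve compact objects, \emph{every} pushforward $g_\ast$ below commutes with small coproducts: the comparison $\coprod g_\ast E_\lambda\to g_\ast\coprod E_\lambda$ is an isomorphism, as one sees by applying $\operatorname{Hom}(C,-)$ with $C$ a compact generator of the source of $g^\ast$ and using that $g^\ast C$ is compact. Consequently an exact coproduct-preserving functor sends $\overline{\langle H\rangle}_n$ into $\overline{\langle g_\ast H\rangle}_n$ (or $\overline{\langle g^\ast H\rangle}_n$), and $H\in\overline{\langle H'\rangle}_m$ forces $\overline{\langle H\rangle}_n\subseteq\overline{\langle H'\rangle}_{nm}$; I carry out such bookkeeping silently. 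Second, the morphisms $i_X$, $j_X$, their joint pullback $k_X\colon X_W:=X\times_Y W\to X$, and $i,j,k\colon W\to Y$ (the last being the composite $W\to U\xrightarrow{i}Y$) are all $\mathcal{T}$-flat monomorphisms, hence compactly bounded by hypothesis and, by \Cref{lem:preflatmono_are_loc}, have invertible counits $(-)^\ast(-)_\ast\to 1$; moreover $\mathcal{T}$-preflatness gives base change isomorphisms $i^\ast f_\ast\cong(f_U)_\ast i_X^\ast$, $j^\ast f_\ast\cong(f_V)_\ast j_X^\ast$ and $k^\ast f_\ast\cong(f_W)_\ast k_X^\ast$, with $f_W\colon X_W\to W$ the base change of $f$ along $k$.

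First I would isolate a base-change step: if $g\colon A\to B$ is compactly bounded and $\ell\colon B'\to B$ is a $\mathcal{T}$-flat monomorphism with base change $g'\colon A'\to B'$ and other leg $\ell_A\colon A'\to A$, then $g'$ is compactly bounded. Indeed, $\ell_A$ is again a $\mathcal{T}$-flat monomorphism, hence compactly bounded with $\ell_A^\ast(\ell_A)_\ast\cong 1$, so for $G\in\mathcal{T}(A')^c$ one has $g'_\ast G\cong g'_\ast\ell_A^\ast(\ell_A)_\ast G\cong\ell^\ast g_\ast(\ell_A)_\ast G$; since $(\ell_A)_\ast G\in\overline{\langle P\rangle}_m$ and $g_\ast P\in\overline{\langle Q\rangle}_n$ for suitable compact $P,Q$, this object lies in $\overline{\langle\ell^\ast Q\rangle}_{nm}$ with $\ell^\ast Q$ compact. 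Taking $(g,\ell)$ to be $(f,i)$ and $(f,j)$ gives the forward implication, and $(f,k)$ shows $f_W$ is compactly bounded.

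For the converse, suppose $f_U$ and $f_V$ (hence $f_W$) are compactly bounded and fix $G\in\mathcal{T}(X)^c$. Feeding $N=f_\ast G$ into \eqref{eq:MVtriangle} gives a distinguished triangle whose middle term is $i_\ast i^\ast f_\ast G\oplus j_\ast j^\ast f_\ast G$ and whose third term is $k_\ast k^\ast f_\ast G$. By base change $i^\ast f_\ast G\cong(f_U)_\ast(i_X^\ast G)$ with $i_X^\ast G$ compact, so $i^\ast f_\ast G\in\overline{\langle H_U\rangle}_{n_U}$ for some compact $H_U\in\mathcal{T}(U)$; pushing forward and using compact boundedness of $i$ gives $i_\ast i^\ast f_\ast G\in\overline{\langle H'_U\rangle}_{n_U m_U}$ for some compact $H'_U\in\mathcal{T}(Y)$, and symmetrically for the $j$- and $k$-terms (the latter via $k^\ast f_\ast G\cong(f_W)_\ast k_X^\ast G$). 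With $H:=H'_U\oplus H'_V\oplus H'_W\in\mathcal{T}(Y)^c$ and $N_0$ the largest of the three exponents, the middle and third terms of the triangle both lie in $\overline{\langle H\rangle}_{N_0}$, and rotating the triangle exhibits $f_\ast G$ as an extension of a shift of one of them by a shift of the other; hence $f_\ast G\in\overline{\langle H\rangle}_{2N_0}$, so $f$ is compactly bounded.

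The strong version follows by the same arguments with every exponent above chosen independently of $G$; this goes through provided the $\mathcal{T}$-flat monomorphisms in play are \emph{strongly} compactly bounded (as in the examples of interest, cf.\ \Cref{ex:of_boundedness}), so that $m_U,m_V,m_W$ and the exponent supplied by the base-change step become uniform. I expect the one real difficulty to be the point this proposition exists to address: the ``local terms'' $i_\ast i^\ast f_\ast G$, $j_\ast j^\ast f_\ast G$, $k_\ast k^\ast f_\ast G$ of the Mayer--Vietoris triangle need not be compact even when $G$ is, and it is precisely compact boundedness of the $\mathcal{T}$-flat monomorphisms --- together with the coproduct-preservation of all pushforwards in sight --- that forces them back into $\overline{\langle H\rangle}_n$ for a compact $H$ on $Y$. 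The remainder is routine manipulation of the $\overline{\langle-\rangle}_n$ filtration.
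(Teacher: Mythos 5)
Your proof is correct and takes essentially the same route as the paper's: the forward implication by exploiting that $i_X$ is again a compactly bounded $\mathcal{T}$-flat monomorphism with invertible counit (so $(f_U)_\ast\cong i^\ast(i\circ f_U)_\ast(i_X)_\ast$ up to the bookkeeping you describe), and the converse by feeding $f_\ast G$ into the Mayer--Vietoris triangle of \Cref{lem:MVtriangle}, rewriting the local terms via base change, and bounding them using compact boundedness of $i\circ f_U$, $j\circ f_V$, $k\circ f_W$ together with coproduct-preservation of the pushforwards. Your closing caveat---that the ``strongly'' version requires the $\mathcal{T}$-flat monomorphisms themselves to be \emph{strongly} compactly bounded so that the exponents become uniform in $G$---is a point the paper's proof passes over silently (it simply asserts the composites are ``(strongly) compactly bounded''), and it holds in the intended applications such as \Cref{cor:comp_bound_local_schemes}.
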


\begin{proof}    
    First, assume $f$ is compactly bounded.
    As $i\circ f_U=f\circ i_X$ is (strongly) compactly bounded and $i^\ast i_\ast =1$ (and pullbacks preserve compacts), it follows that $f_U$ is compactly bounded.
    Similar reasoning applies to $f_V$.

    Conversely, assume $f_U$ and $f_V$ are (strongly) compactly bounded and consider the commutative diagram whose squares are Cartesian
    \[
        \begin{tikzcd}[ampersand replacement=\&]
            {X_W} \& X_U  \& X \\
            W \& U \& Y\rlap{ .}
            \arrow["{}", from=1-1, to=1-2]
            \arrow["{f_W}"', from=1-1, to=2-1]
            \arrow["f_U", from=1-2, to=2-2]
            \arrow["", from=2-1, to=2-2]
            \arrow["{i_X}"', from=1-2, to=1-3]
            \arrow["{f}", from=1-3, to=2-3]
            \arrow["i", from=2-2, to=2-3]
            \arrow["k"', bend right=15, from=2-1, to=2-3]
            \arrow["k_X", bend left=15, from=1-1, to=1-3]
        \end{tikzcd}
    \]
    By the previous paragraph $f_W$ is (strongly) compactly bounded.
    Take  $G\in\mathcal{T}^c(X)$ arbitrary. 
    By $\mathcal{T}$-(pre)flatness of $i$, $j$ and $k$ together with \Cref{lem:MVtriangle} we have the following distinguished triangle
    \begin{equation}\label{eq:comp_bound_local_target}
        f_\ast G \to (i \circ f_{U})_\ast i_X^\ast G \oplus (j \circ f_{V})_\ast j_X^\ast G \to (k \circ f_{W})_\ast k_X^\ast G \to f_\ast G[1].
    \end{equation}
    By assumption $i \circ f_U$, $j\circ f_V$ and $k\circ f_W$ are (strongly) compactly bounded. 
    Consequently, we can bound the pushforwards of $i_X^\ast G$, $j_X^\ast G$ and $k_X^\ast G$; and, hence, also of $f^\ast G$ using the triangle \eqref{eq:comp_bound_local_target}.  
\end{proof}

\begin{proposition}[Local on source]\label{prop:comp_bound_local_source}
    Let $\mathcal{T}$ be a compactly generated $\mathcal{C}$-presheaf whose pullbacks preserve compacts.
    Assume, further, that $\mathcal{T}$-flat monomorphisms are compactly bounded\footnote{Again, one can weaker this, only requiring the $\mathcal{T}$-flat monomorphisms that appear be compactly bounded.}. 
    Given a Zariski $\mathcal{T}$-square
            \begin{displaymath}
                \begin{tikzcd}
                    {W} & V \\
                    U & X
                    \arrow[from=1-1, to=1-2]
                    \arrow[from=1-1, to=2-1]
                    \arrow["j", from=1-2, to=2-2]
                    \arrow["i"', from=2-1, to=2-2]
                    \arrow["k"{description}, from=1-1, to=2-2]
                \end{tikzcd}
            \end{displaymath}
    and a morphism $f \colon X \to Y$ in $\mathcal{C}$.
    Then $f$ is (strongly) compactly bounded if and only if $f \circ i$ and $f \circ j$ are.
\end{proposition}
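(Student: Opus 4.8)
The plan is to imitate the proof of \Cref{prop:comp_bound_local_target}; it is in fact slightly easier here, since the Zariski $\mathcal{T}$-square already sits over the source $X$ of $f$, so no auxiliary base change is needed. The two ingredients are, as there, the closure of the (strongly) compactly bounded functors under composition (combined with the running hypothesis that $\mathcal{T}$-flat monomorphisms are compactly bounded) and the Mayer--Vietoris triangle of \Cref{lem:MVtriangle} attached to the square. I would first record that, writing $h\colon W\to U$ for the projection, $h$ is the base change of the $\mathcal{T}$-flat monomorphism $j$ along $i$ and is therefore itself a $\mathcal{T}$-flat monomorphism, and that $k = i\circ h$.

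For the forward implication: if $f$ is (strongly) compactly bounded then, since $i$ and $j$ are $\mathcal{T}$-flat monomorphisms and hence compactly bounded, the composites $f\circ i$ and $f\circ j$ are (strongly) compactly bounded. For the converse, assume $f\circ i$ and $f\circ j$ are (strongly) compactly bounded; then so is $f\circ k = (f\circ i)\circ h$, using that $h$ is a $\mathcal{T}$-flat monomorphism. Now fix $G\in\mathcal{T}^c(X)$; applying \Cref{lem:MVtriangle} with $N = G$ and then the exact functor $f_\ast$ produces the distinguished triangle
\begin{equation*}
    f_\ast G \longrightarrow (f\circ i)_\ast i^\ast G \oplus (f\circ j)_\ast j^\ast G \longrightarrow (f\circ k)_\ast k^\ast G \longrightarrow f_\ast G[1].
\end{equation*}
Because the pullbacks of $\mathcal{T}$ preserve compacts, $i^\ast G$, $j^\ast G$ and $k^\ast G$ are compact, so (strong) compact boundedness of $f\circ i$, $f\circ j$ and $f\circ k$ supplies compact objects $H_1, H_2, H_3\in\mathcal{T}^c(Y)$ and integers $n_1, n_2, n_3\ge 0$ with the two terms of the triangle besides $f_\ast G$ lying in $\overline{\langle H_1\oplus H_2\rangle}_{\max(n_1,n_2)}$ and $\overline{\langle H_3\rangle}_{n_3}$ respectively; in the strongly bounded case the $n_\ell$ may be chosen independently of $G$. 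Rotating the triangle exhibits $f_\ast G$ as a one-step extension of these two objects, so $f_\ast G\in\overline{\langle H_1\oplus H_2\oplus H_3\rangle}_{m}$ for an explicit $m$ depending only on $n_1, n_2, n_3$. Since $H_1\oplus H_2\oplus H_3$ is compact and $G$ was arbitrary, this shows $f$ is (strongly) compactly bounded.

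The only thing that takes a moment is the middle ($k$-)term: one has to spot that $k$ factors through the $\mathcal{T}$-flat monomorphism $h$ obtained by base change, so that the composition closure and the hypothesis on $\mathcal{T}$-flat monomorphisms apply to $f\circ k$; and, for the strong version, one has to note that the only object-dependent choices are the compact objects $H_1, H_2, H_3$, the lengths $n_1, n_2, n_3$ (hence $m$) remaining uniform. Beyond that the argument is entirely formal, just as in \Cref{prop:comp_bound_local_target}.
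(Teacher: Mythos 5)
Your proposal is correct and follows the paper's own (much terser) argument exactly: the forward direction by closure under composition with the compactly bounded $\mathcal{T}$-flat monomorphisms $i_\ast$, $j_\ast$, and the converse by noting $f\circ k=(f\circ i)\circ h$ for the $\mathcal{T}$-flat monomorphism $h\colon W\to U$ and then pushing the Mayer--Vietoris triangle of \Cref{lem:MVtriangle} forward along $f_\ast$. Your explicit identification of $k=i\circ h$ is precisely what the paper's phrase ``by the previous paragraph'' is gesturing at, so nothing is missing.
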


\begin{proof}
    One direction is trivial as (strongly) compact boundedness is closed under composition.
    
    For the other direction, note that by the previous paragraph $f\circ k$ is (strongly) compactly bounded.
    Thus, applying $f_\ast$ to the distinguished triangle \eqref{eq:MVtriangle} obtained from \Cref{lem:MVtriangle}, gives a triangle one can use to bound the pushforward of $f_\ast$.
\end{proof}

\begin{corollary}\label{cor:comp_bound_local_schemes}
    Let $f \colon X \to Y$ be a morphism of quasi-compact separated schemes, $X = U^\prime \cup V^\prime$ and $Y=U\cup V$ be covers of quasi-compact opens.
    Then the following are equivalent
    \begin{enumerate}
        \item $f$ is (strongly) compactly bounded,
        \item\label{item:local_schemes_target} the restrictions of $f$ on the target, $f_U\colon f^{-1}(U)\to U$ and $f_V\colon f^{-1}(V)\to V$, are (strongly) compactly bounded,
        \item\label{item:local_schemes_source} the restrictions of $f$ on the source, $f|_{U^\prime}\colon U^\prime \to Y$ and $f|_{V^\prime}\colon V^\prime\to Y$, are (strongly) compactly bounded.
    \end{enumerate}
\end{corollary}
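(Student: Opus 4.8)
The plan is to deduce this directly from \Cref{prop:comp_bound_local_target,prop:comp_bound_local_source}, applied to the presheaf $\mathcal{T}=D_{\operatorname{qc}}$ on (an essentially small version of) the category $\mathcal{C}$ of quasi-compact separated schemes. First I would record that $\mathcal{T}=D_{\operatorname{qc}}$ satisfies the standing hypotheses of both propositions: $D_{\operatorname{qc}}(Z)$ is compactly generated for every $Z$ in $\mathcal{C}$, and the derived pullbacks $\mathbb{L}g^{\ast}$ are left adjoints — hence commute with small coproducts — and carry perfect complexes to perfect complexes, i.e.\ preserve compacts. The remaining hypotheses concern only the \emph{$\mathcal{T}$-flat monomorphisms that actually occur} in the two propositions (here I use the weakenings indicated in their footnotes, which is needed since a general $\mathcal{T}$-flat monomorphism — e.g.\ a localization — is not obviously compactly bounded): by \Cref{ex:T-flatness_in_AG} these are precisely open immersions, as every map in sight is finitely presented, and by \Cref{ex:of_boundedness} the pushforward along an open immersion with quasi-compact source into a quasi-compact separated scheme is (strongly) compactly bounded.

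The only slightly delicate point, and the reason for the quasi-compactness hypotheses, is checking that all the sources of these open immersions are quasi-compact. The opens $U,V,U^{\prime},V^{\prime}$ are quasi-compact by assumption; $U\cap V$ and $U^{\prime}\cap V^{\prime}$ are quasi-compact because the inclusions $U\hookrightarrow Y$ and $U^{\prime}\hookrightarrow X$ are quasi-compact morphisms ($Y$ and $X$ being quasi-separated); and $f^{-1}(U)$, $f^{-1}(V)$, $f^{-1}(U\cap V)$ are quasi-compact because $f$ is quasi-compact (as $X$ is quasi-compact and $Y$ is quasi-separated). In particular all the fibre products occurring below — being preimages under $f$ of opens — exist inside $\mathcal{C}$, so the footnote weakening of the finite-limit assumption in \Cref{prop:comp_bound_local_target} applies.

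Finally I would assemble the two Zariski $\mathcal{T}$-squares. The cover $Y=U\cup V$ gives the Cartesian square with corners $U\cap V$, $V$, $U$, $Y$; it satisfies \eqref{item:ZMV_square1} since $U\hookrightarrow Y$ and $V\hookrightarrow Y$ are $\mathcal{T}$-flat monomorphisms, and \eqref{item:ZMV_square2} since an object of $D_{\operatorname{qc}}(Y)$ that restricts to $0$ on both $U$ and $V$ is $0$. Similarly $X=U^{\prime}\cup V^{\prime}$ gives a Zariski $\mathcal{T}$-square with target $X$. Applying \Cref{prop:comp_bound_local_target} to the first square and $f\colon X\to Y$ — noting $X\times_{Y}U=f^{-1}(U)$ (and similarly over $V$) — yields $(1)\Leftrightarrow(2)$; applying \Cref{prop:comp_bound_local_source} to the second square and $f$ — noting that $f$ precomposed with $U^{\prime}\hookrightarrow X$ is $f|_{U^{\prime}}$ (and similarly for $V^{\prime}$) — yields $(1)\Leftrightarrow(3)$. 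Hence all three are equivalent, and in both the plain and the strong form, since the two propositions treat these simultaneously. The main (though entirely routine) obstacle is the quasi-compactness bookkeeping of the second paragraph; the rest is a direct appeal to the two propositions.
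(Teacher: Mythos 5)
Your proof is correct and takes essentially the same route as the paper: the paper's proof is a one-line appeal to \Cref{prop:comp_bound_local_target,prop:comp_bound_local_source} together with the observation from \Cref{ex:of_boundedness} that pushforward along quasi-compact open immersions into quasi-compact separated schemes is (strongly) compactly bounded. You simply spell out the hypothesis-checking (compact generation of $D_{\operatorname{qc}}$, the quasi-compactness bookkeeping, and the verification of the Zariski-square axioms) that the paper leaves implicit.
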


\begin{proof}
    This follows immediately from \Cref{prop:comp_bound_local_target,prop:comp_bound_local_source}, noting that by \Cref{ex:of_boundedness} pushing forward along quasi-compact open immersions (to quasi-compact separated schemes) is (strongly) compactly bounded.
\end{proof}

\begin{corollary}\label{cor:comp_bound_affine_local}
    Let $f\colon X \to Y$ be a morphism of quasi-compact separated schemes. 
    Suppose there exist affine open covers $Y=\cup_{i} U_i$ and $f^{-1}(U_i)=\cup_{j} V_j$ such that the restricted morphisms $V_j\to U_i$ are (strongly) compactly bounded, then $f$ is (strongly) compactly bounded.
\end{corollary}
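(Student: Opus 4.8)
The plan is to deduce the statement from \Cref{cor:comp_bound_local_schemes}\eqref{item:local_schemes_source} (`local on source') after passing to a suitable finite affine open cover of $X$.

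First I would note that for each $i$ and each affine open $V$ in the given cover of $f^{-1}(U_i)$, the morphism $V\to Y$ factors as the given morphism $V\to U_i$ followed by the open immersion $U_i\hookrightarrow Y$. By hypothesis $V\to U_i$ is (strongly) compactly bounded, and by \Cref{ex:of_boundedness} so is the pushforward along the open immersion $U_i\hookrightarrow Y$ (here $U_i$ is affine, hence quasi-compact, and $Y$ is quasi-compact separated); as these conditions are stable under composition, $V\to Y$ is (strongly) compactly bounded. Since $\{U_i\}$ covers $Y$, the totality of these opens $V$ forms an affine open cover of $X=f^{-1}(Y)=\bigcup_i f^{-1}(U_i)$, and by quasi-compactness of $X$ we may extract a finite subcover $X=Z_1\cup\cdots\cup Z_N$ with each $Z_k$ one of the $V$'s; in particular each restriction $f|_{Z_k}\colon Z_k\to Y$ is (strongly) compactly bounded.

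It remains to upgrade \Cref{cor:comp_bound_local_schemes}\eqref{item:local_schemes_source} from two-term to finite covers of the source, which is an immediate induction on $N$: viewing $X=(Z_1\cup\cdots\cup Z_{N-1})\cup Z_N$ as a cover by two quasi-compact opens of the quasi-compact separated scheme $X$, \Cref{cor:comp_bound_local_schemes} reduces the (strong) compact boundedness of $f$ to that of its restrictions to the source over $Z_1\cup\cdots\cup Z_{N-1}$ and over $Z_N$; the latter is $f|_{Z_N}$, while the former follows from the inductive hypothesis, its further restrictions to the $Z_k$ with $k<N$ being exactly the $f|_{Z_k}$. (Alternatively, one could first invoke \Cref{cor:comp_bound_local_schemes}\eqref{item:local_schemes_target} to reduce to the morphisms $f^{-1}(U_i)\to U_i$ and then argue on the source.) Once \Cref{cor:comp_bound_local_schemes} is in hand the argument is essentially bookkeeping; the only step needing care is the extraction of the finite subcover, which is where quasi-compactness of $X$ enters.
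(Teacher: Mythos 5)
Your proof is correct. It rests on the same two ingredients as the paper's: reduction to finite covers by quasi-compactness, and induction using \Cref{cor:comp_bound_local_schemes}. The only organizational difference is how you combine the two layers of covers. The paper runs two nested inductions: first over $f^{-1}(U_i)=\cup_j V_j$ using the source-local clause to get that each $f^{-1}(U_i)\to U_i$ is (strongly) compactly bounded, then over $Y=\cup_i U_i$ using the target-local clause. You instead flatten everything into a single finite affine cover of $X$ by composing each $V_j\to U_i$ with the (strongly) compactly bounded pushforward along $U_i\hookrightarrow Y$, and then run one induction on the source only. Both work; your route uses only clause \eqref{item:local_schemes_source} at the cost of invoking closure under composition and the boundedness of open-immersion pushforwards one extra time, while the paper's keeps the two covers separate and so also uses clause \eqref{item:local_schemes_target}. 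Your handling of the quasi-compactness of the partial unions $Z_1\cup\cdots\cup Z_{N-1}$ (needed for the two-term corollary to apply) is the right point to be careful about, and you address it.
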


\begin{proof}
    By quasi-compactness we can assume the covers are finite.
    Consequently, by first inducting over $f^{-1}(U_i)=\cup_{j} V_j$ and using \Cref{cor:comp_bound_local_schemes} yields that $f^{-1}(U_i) \to U_i$ is (strongly) compactly bounded.
    Thus, inducting over $Y=\cup_{i} U_i$ and again using \Cref{cor:comp_bound_local_schemes}  gives us what we want.   
\end{proof}

\begin{lemma}\label{lem:std_smooth_comp_bound}
    A standard smooth morphism of affine schemes (in the sense of \cite[\href{https://stacks.math.columbia.edu/tag/01V5}{Tag 01V5}]{stacks-project}) is compactly bounded.
\end{lemma}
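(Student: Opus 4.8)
The plan is to factor the given morphism into two pieces, each handled by tools already available, and then to invoke closure of compact boundedness under composition. I would write the standard smooth ring map as $R\to S$ with presentation $S=R[x_1,\dots,x_n]/(f_1,\dots,f_c)$ where $g:=\det(\partial f_j/\partial x_i)_{1\le i,j\le c}$ is a unit in $S$, and set $Y:=\operatorname{Spec}(R)$, $X:=\operatorname{Spec}(S)$ and $R':=R[x_{c+1},\dots,x_n]$. Then the morphism $f\colon X\to Y$ factors as
\begin{displaymath}
X\xrightarrow{\ h\ }\operatorname{Spec}(R')=\mathbb{A}^{n-c}_{Y}\xrightarrow{\ p\ }Y,
\end{displaymath}
with $p$ the structure morphism of affine $(n-c)$-space and $h$ corresponding to the ring map $R'\to S$. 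As compact boundedness is closed under composition, it suffices to treat $p$ and $h$ separately.

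For $p$: it is affine, so $\mathbb{R}p_\ast\colon D_{\operatorname{qc}}(\mathbb{A}^{n-c}_Y)=D(R')\to D(R)=D_{\operatorname{qc}}(Y)$ is restriction of scalars along $R\to R'$, which is exact, and $R'$ is free as an $R$-module. Hence every finitely generated projective $R'$-module lies, viewed as an $R$-module, in $\overline{\langle\mathcal{O}_Y\rangle}_1$. Given a compact (perfect) object $G$ on $\mathbb{A}^{n-c}_Y$, I would represent it by a bounded complex of finitely generated projective $R'$-modules with $n(G)$ nonzero terms; its brutal truncation filtration, which the exact functor $\mathbb{R}p_\ast$ transports to a filtration of $\mathbb{R}p_\ast G$ with the same number of layers, exhibits $\mathbb{R}p_\ast G$ as an iterated extension of $n(G)$ shifts of finitely generated projective $R'$-modules, so $\mathbb{R}p_\ast G\in\overline{\langle\mathcal{O}_Y\rangle}_{n(G)}$ by the usual subadditivity of the subcategories $\overline{\langle-\rangle}_m$ under extensions (cf.\ \cite{Rouquier:2008}). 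Since $\mathcal{O}_Y$ is compact, this shows $p$ is compactly bounded.

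For $h$: the chosen presentation exhibits $R'\to S$ as again standard smooth, now with exactly as many variables ($x_1,\dots,x_c$) as relations ($f_1,\dots,f_c$) and with the same invertible Jacobian minor $g$; it is therefore smooth of relative dimension $0$, i.e.\ étale --- it is flat and of finite presentation, and it is unramified because $\Omega_{S/R'}=0$, the conormal-to-differentials map being given by the invertible Jacobian matrix over $S$. By the structure theory of étale ring maps \cite[\href{https://stacks.math.columbia.edu/tag/00UE}{Tag 00UE}]{stacks-project} there are $s_1,\dots,s_m\in S$ generating the unit ideal with each $R'\to S_{s_i}$ standard étale, say $S_{s_i}\cong (R'[t]/(q_i))_{r_i}$ with $q_i$ monic. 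Then $\operatorname{Spec}(R'[t]/(q_i))\to\operatorname{Spec}(R')$ is finite flat, hence (strongly) compactly bounded by \Cref{ex:of_boundedness}, while $\operatorname{Spec}(S_{s_i})=\operatorname{Spec}((R'[t]/(q_i))_{r_i})\hookrightarrow\operatorname{Spec}(R'[t]/(q_i))$ is a quasi-compact open immersion of affine schemes, hence compactly bounded by \Cref{ex:of_boundedness} as well. Composing, each $\operatorname{Spec}(S_{s_i})\to\operatorname{Spec}(R')$ is compactly bounded; as $\{\operatorname{Spec}(S_{s_i})\}_i$ is an affine open cover of $X$ and $\{\operatorname{Spec}(R')\}$ a trivial affine open cover of the target, \Cref{cor:comp_bound_affine_local} then gives that $h$ is compactly bounded.

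The standard smooth hypothesis enters only in producing the factorisation through affine space and in recognising $h$ as étale; the rest is bookkeeping with the local criteria, the delicate points being to keep every scheme in sight affine (so that \Cref{cor:comp_bound_affine_local} applies) and to read off étaleness of $h$ from the Jacobian condition. Alternatively, one can bypass the étale structure theory entirely: since $g$ is a unit in $S$, the morphism $f$ factors as $X=\operatorname{Spec}(S)\hookrightarrow D(g)\hookrightarrow\mathbb{A}^n_Y\to Y$, where $D(g)\subseteq\mathbb{A}^n_Y$ is the distinguished affine open on which $g$ is invertible; the first map is a closed immersion of affines smooth over $Y$, hence a regular immersion, so $\mathcal{O}_X$ is perfect over $\mathcal{O}_{D(g)}$ and the first map is (strongly) compactly bounded, the second is a quasi-compact open immersion (so compactly bounded by \Cref{ex:of_boundedness}), and the third is handled exactly as $p$ above.
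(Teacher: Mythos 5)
Your proof is correct, but it takes a genuinely different route from the paper's. You factor the standard smooth map $R\to S$ through $R'=R[x_{c+1},\dots,x_n]$, handle the affine-space projection by hand (restriction of scalars along a free extension plus brutal truncation), recognise $R'\to S$ as \'etale via the Jacobian criterion, and then reduce via the structure theory of \'etale ring maps to finite flat morphisms and quasi-compact open immersions, gluing with \Cref{cor:comp_bound_affine_local}. The paper instead argues entirely in commutative algebra: it uses the relative global complete intersection structure of a standard smooth algebra to show that $f_1,\dots,f_c$ is locally a regular sequence, deduces (via the local-to-global theorem of Avramov--Iyengar--Lipman, with a Noetherian approximation step for general $R$) that $S$ is perfect as a complex over $R[x_1,\dots,x_n]$, hence lies in some $\overline{\langle R\rangle}_n$, and concludes using $\operatorname{Perf}(S)=\langle S\rangle$. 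Your approach buys a cleaner conceptual reduction and leans on the locality results already established in the section, at the cost of importing the \'etale structure theorem; the paper's proof is more self-contained and avoids any covering argument, which is why it needs the Noetherian approximation step that your argument sidesteps. Your alternative route at the end (factoring through the regular closed immersion $\operatorname{Spec}(S)\hookrightarrow D(g)\subseteq\mathbb{A}^n_Y$) is in fact the closest in spirit to the paper's, since both ultimately rest on $S$ being perfect over the polynomial ring; there the general fact that a closed immersion of smooth $Y$-schemes is Koszul-regular replaces the paper's explicit regular-sequence and approximation arguments. All three arguments establish only compact (not strong compact) boundedness, with the bounding integer depending on the object, which is exactly what the lemma asserts.
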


\begin{proof}
    A standard smooth morphism of affine schemes corresponds, by definition, to a ring morphism $R \to S \colonequals R[x_1, \ldots, x_n]/(f_1, \ldots, f_c)$ with $\det (\partial f_i/\partial x_j)_{i,j}$ invertible in $S$.

    First, assume $R$ is Noetherian.
    Let $\mathfrak{p}$ be a prime of $R[x_1, \dots, x_n]$.
    As $S$ is a relative global complete intersection over $R$ \cite[\href{https://stacks.math.columbia.edu/tag/00T7}{Tag 00T7}]{stacks-project}, it follows by \cite[\href{https://stacks.math.columbia.edu/tag/00SV}{Tag 00SV}]{stacks-project} that $f_1, \ldots, f_c$ is a regular sequence in $R[x_1, \ldots , x_ n]_{\mathfrak p}$.
    Consequently, $S_{\mathfrak p}$ is a perfect complex over $R[x_1, \ldots, x_n]_{\mathfrak p}$.
    Thus, by \cite[Theorem 4.1]{Avramov/Iyengar/Lipman:2010}, $S$ is a perfect complex over $R[x_1, \ldots, x_n]$.
    
    Next, by \cite[\href{https://stacks.math.columbia.edu/tag/00SU}{Tag 00SU}]{stacks-project}, there exists some finite type $\mathbb{Z}$-subalgebra $R^\prime\subseteq R$ with $f_1,\ldots,f_c\in R^\prime[x_1,\ldots, x_n]$ and $S^\prime:= R^\prime[x_1, \dots x_n]/(f_1, \ldots, f_c)$  a relative global complete intersection over $R^\prime$ (in particular, it is flat over $R^\prime$ \cite[\href{https://stacks.math.columbia.edu/tag/00SW}{Tag 00SW}]{stacks-project}).   
    By the previous paragraph $S^\prime$ is perfect over $R^\prime[x_1, \ldots, x_n]$ and consequently $S=S^\prime\otimes_{R^\prime} R$ is perfect over $R[x_1, \ldots ,x_n]$.
    
    Thus $S$ has finite projective dimension as an $R$-module and therefore clearly lived inside some $\overline{\langle R\rangle}_n$.
    As $\operatorname{Perf}(S)=\langle S\rangle$, this shows $\operatorname{Spec}(S)\to \operatorname{Spec}(R)$ is compactly bounded.
\end{proof}

\begin{proposition}\label{prop:smooth_compact_bounded}
    Any smooth morphism of quasi-compact separated schemes is compactly bounded.
\end{proposition}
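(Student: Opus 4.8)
The plan is to reduce to the affine, standard smooth case handled in \Cref{lem:std_smooth_comp_bound} and then spread compact boundedness over an affine open cover of the source by means of the local-on-source criterion \Cref{cor:comp_bound_local_schemes}. So let $f\colon X\to Y$ be a smooth morphism of quasi-compact separated schemes.

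First I would invoke the local structure theory of smooth morphisms \cite[\href{https://stacks.math.columbia.edu/tag/01V7}{Tag 01V7}]{stacks-project}: every point of $X$ admits an affine open neighbourhood $V\subseteq X$ mapping into an affine open $U\subseteq Y$ such that the induced morphism $V\to U$ is \emph{standard} smooth. As $X$ is quasi-compact, finitely many such $V$, say $V_1,\dots,V_n$ with $V_k\to U_k$ standard smooth, cover $X$. By \Cref{lem:std_smooth_comp_bound} each $V_k\to U_k$ is compactly bounded, and each open immersion $U_k\hookrightarrow Y$ is quasi-compact (as $U_k$ is affine and $Y$ quasi-compact separated), hence (strongly) compactly bounded by \Cref{ex:of_boundedness}. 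Since compact boundedness is stable under composition, the restriction $f|_{V_k}\colon V_k\hookrightarrow X\xrightarrow{f}Y$, which factors as $V_k\to U_k\hookrightarrow Y$, is compactly bounded for every $k$.

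It remains to glue. Writing $X_m:=V_1\cup\dots\cup V_m$, each $X_m$ is a quasi-compact open (hence separated) subscheme of $X$, and one shows by induction on $m$ that $f|_{X_m}\colon X_m\to Y$ is compactly bounded: the case $m=1$ is the previous paragraph, and for the inductive step one applies \Cref{cor:comp_bound_local_schemes}\eqref{item:local_schemes_source} to the cover $X_{m+1}=X_m\cup V_{m+1}$ (together with the trivial cover $Y=Y\cup Y$), using that $f|_{X_m}$ and $f|_{V_{m+1}}$ are compactly bounded. For $m=n$ this yields that $f$ is compactly bounded. (When $f$ happens to be surjective one may instead arrange affine open covers $Y=\bigcup_iU_i$ and $f^{-1}(U_i)=\bigcup_jV_{ij}$ with every $V_{ij}\to U_i$ standard smooth and appeal to \Cref{cor:comp_bound_affine_local} directly.)

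The only genuine subtlety, and the reason I work locally on the source rather than on the target, is that \cite[\href{https://stacks.math.columbia.edu/tag/01V7}{Tag 01V7}]{stacks-project} only produces a standard smooth chart after shrinking \emph{both} the source and the target, so in general $f^{-1}(U)\to U$ need not be covered by morphisms that are standard smooth over $U$ itself. Everything else is routine bookkeeping, the essential input---finite projective dimension of a standard smooth algebra, and hence boundedness---having already been extracted in \Cref{lem:std_smooth_comp_bound}.
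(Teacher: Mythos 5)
Your proof is correct and follows essentially the same route as the paper: reduce to standard smooth morphisms of affine schemes via \cite[\href{https://stacks.math.columbia.edu/tag/01V7}{Tag 01V7}]{stacks-project} and \Cref{lem:std_smooth_comp_bound}, then spread compact boundedness over a finite cover using the Zariski-locality results of \Cref{sec:boundedness}. The only (minor) difference is that you glue on the source with the target $Y$ fixed via \Cref{cor:comp_bound_local_schemes}\eqref{item:local_schemes_source} rather than citing \Cref{cor:comp_bound_affine_local} directly, which neatly handles the bookkeeping issue you identify (the charts of Tag 01V7 shrink both source and target) that the paper's one-line proof leaves implicit.
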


\begin{proof}
    By \cite[\href{https://stacks.math.columbia.edu/tag/01V7}{Tag 01V7}]{stacks-project} any smooth morphism is locally a standard smooth morphism of affine schemes. 
    Hence, we are done by the combination of \Cref{cor:comp_bound_affine_local} and \Cref{lem:std_smooth_comp_bound}.
\end{proof}

Moreover, for separated schemes of finite type over `small' rings one can show that any flat morphism is compactly bounded. 
For this recall the following result.

\begin{lemma}[{\cite[Seconde Partie, Corollarie 3.3.2]{Raynaud/Gruson:1971} or \cite[Corollary 3.13]{Simson:1977}}]\label{lem:proj_dim_flat} 
The projective dimension of any flat module over a ring of cardinality bounded by $\aleph_{n}$, for some positive integer $n$, is bounded above by $n+1$.
\end{lemma}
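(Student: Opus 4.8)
The plan is to split the proof into two steps: a reduction, by a transfinite dévissage along pure submodules, to the case of flat modules of cardinality at most $\aleph_n$, and then a treatment of that case using the Govorov--Lazard presentation of flat modules together with a bound on the cohomological dimension of the indexing poset.

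For the reduction I will use two facts valid over an arbitrary ring: a pure submodule of a flat module is flat with flat quotient, and the class of modules of projective dimension $\le n+1$ is closed under transfinite extensions --- that is, under continuous well-ordered filtrations whose successive quotients lie in the class (the standard Auslander--Eklof argument, which comes down to the vanishing of $\operatorname{Ext}^{n+2}_R(-,N)$ along such a filtration, for each module $N$). Given $R$ with $|R|\le\aleph_n$ and a flat $R$-module $M$, well-order the elements of $M$ and build, by a closing-off process, a continuous chain $0=M_0\subseteq M_1\subseteq\cdots$ with union $M$: at each successor step one adjoins the next element of $M$ and then enlarges to a pure submodule, which the usual iteration accomplishes by adjoining at most $\max(\aleph_0,|R|)=\aleph_n$ further elements. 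Then every $M_\alpha$ is pure in $M$, hence flat, and each $M_{\alpha+1}/M_\alpha$ is flat of cardinality $\le\aleph_n$. By the closure property just recalled, it suffices to show that a flat module of cardinality at most $\aleph_n$ has projective dimension at most $n+1$.

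So let $M$ be flat with $|M|\le\aleph_n$. By Govorov--Lazard, $M\cong\varinjlim_{i\in I}F_i$ with each $F_i$ finitely generated free, and by the usual construction (whose objects are finitely generated free modules equipped with a map to $M$) one may take $I$ to be a directed poset with $|I|\le\max(\aleph_0,|M|)=\aleph_n$. For any $R$-module $N$ one has $\operatorname{RHom}_R(M,N)\simeq\operatorname{R}\varprojlim_{i\in I}\operatorname{RHom}_R(F_i,N)\simeq\operatorname{R}\varprojlim_{i\in I}\operatorname{Hom}_R(F_i,N)$, the last step because the $F_i$ are free; equivalently there is a spectral sequence
\[
    E_2^{p,q}={\varprojlim}^{(p)}_{I}\operatorname{Ext}^q_R(F_i,N)\ \Longrightarrow\ \operatorname{Ext}^{p+q}_R(M,N)
\]
which, again since the $F_i$ are free, collapses to $\operatorname{Ext}^p_R(M,N)\cong{\varprojlim}^{(p)}_{I}\operatorname{Hom}_R(F_i,N)$. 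Hence $\operatorname{pd}_RM\le n+1$ will follow once we know that ${\varprojlim}^{(p)}_{I}$ vanishes for $p>n+1$ on every inverse system indexed by $I$; in other words, that the cohomological dimension of a directed poset of cardinality at most $\aleph_n$ is at most $n+1$.

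This cohomological statement is the crux, and I expect it to be the main obstacle; it is a theorem of Mitchell, anticipated by Jensen and Osofsky in their work on ${\varprojlim}^{(i)}$. It is proved by induction on $n$: for $n=0$ a countable directed poset admits a cofinal chain of order type $\le\omega$, so only ${\varprojlim}^{(0)}$ and ${\varprojlim}^{(1)}$ can be nonzero (Milnor) and the cohomological dimension is $\le1$; for the inductive step one writes $I$ as the union of a continuous chain of length $\omega_n$ of directed sub-posets each of cardinality $\le\aleph_{n-1}$ (via a directed-closure argument parallel to the module-theoretic one above), and then runs a Mayer--Vietoris / \v{C}ech spectral sequence over this chain, combining the inductive bound $n$ on each piece with the contribution $1$ of a chain of that length. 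Granting this, the reductions above yield $\operatorname{pd}_RM\le n+1$, which is the assertion. (One can also bypass ${\varprojlim}$-cohomology and argue directly with telescope resolutions along the dévissage, in the spirit of Raynaud--Gruson; the essential difficulty is unchanged.)
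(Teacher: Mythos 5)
The paper offers no proof of this lemma at all: it is stated as a black box with citations to Raynaud--Gruson and to Simson, so there is nothing internal to compare against. Your proposal is an accurate reconstruction of the standard proof from that literature. The two halves are exactly the right ones: the Kaplansky-style d\'evissage of a flat module into a continuous chain of pure submodules with flat quotients of cardinality at most $\aleph_n$, combined with Auslander's lemma on transfinite extensions, correctly reduces to the small case; and for a flat $M$ with $|M|\le\aleph_n$ the identification $\operatorname{Ext}^p_R(M,N)\cong{\varprojlim}^{(p)}_I\operatorname{Hom}_R(F_i,N)$ is legitimate (the simplicial bar resolution of a filtered colimit of free modules is a projective resolution, and applying $\operatorname{Hom}(-,N)$ yields precisely the cochain complex computing ${\varprojlim}^{(p)}$), so everything does come down to the vanishing of ${\varprojlim}^{(p)}_I$ for $p\ge n+2$ over a directed poset with $|I|\le\aleph_n$. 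You are right to flag that last statement as the crux: it is the Goblot--Mitchell theorem, and your induction sketch for it (cofinal $\omega$-chain plus Milnor's $\varprojlim^1$-sequence in the countable case, then a continuous exhaustion of length $\omega_n$ by smaller directed subposets in the inductive step) is the correct shape, though as written it is a roadmap rather than a proof --- the spectral-sequence bookkeeping in the inductive step is the genuinely delicate part of Mitchell's argument. Given that the paper itself treats the lemma as a citation, deferring to Goblot--Mitchell at that point is entirely reasonable, and the remainder of your argument is complete and correct.
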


\begin{proposition}
    Let $R$ be any ring of cardinality bounded by $\aleph_n$, for some positive integer $n$, and let $Y$ be a separated scheme of finite type over $R$. 
    Then any flat, quasi-compact, and separated morphism $f \colon X \to Y$ is compactly bounded.
\end{proposition}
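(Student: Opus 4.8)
The plan is to reduce everything, via \Cref{cor:comp_bound_affine_local}, to the affine case, and then to observe that a flat morphism $\operatorname{Spec} S\to\operatorname{Spec} A$ with $|A|\le\aleph_n$ is compactly bounded essentially by unwinding definitions together with \Cref{lem:proj_dim_flat}. First I would choose the covers. Since $Y$ is of finite type over $R$ it is quasi-compact and admits a finite affine open cover $Y=\bigcup_i U_i$ with $U_i=\operatorname{Spec} A_i$ and each $A_i$ a finitely generated $R$-algebra; such an $A_i$ is a quotient of some $R[x_1,\ldots,x_m]$, whose cardinality is at most $\max(|R|,\aleph_0)\le\aleph_n$ (here $n\ge 1$), so $|A_i|\le\aleph_n$. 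As $f$ is quasi-compact and separated, each preimage $f^{-1}(U_i)$ is a quasi-compact separated scheme, hence admits a finite affine open cover $f^{-1}(U_i)=\bigcup_j V_{ij}$ with $V_{ij}=\operatorname{Spec} S_{ij}$. Each composite $V_{ij}\hookrightarrow f^{-1}(U_i)\to U_i$ is flat, being an open immersion followed by the base change $f^{-1}(U_i)\to U_i$ of the flat morphism $f$, so $S_{ij}$ is a flat $A_i$-module.

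Next I would bound the restricted morphisms $V_{ij}\to U_i$. Pushforward along this affine morphism is restriction of scalars along $A_i\to S_{ij}$, an exact functor $D(S_{ij})\to D(A_i)$, so it suffices to control the image of $S_{ij}$ itself: since $\operatorname{Perf}(S_{ij})=\langle S_{ij}\rangle=\bigcup_{m}\langle S_{ij}\rangle_m$, any $G\in\operatorname{Perf}(S_{ij})$ lies in $\langle S_{ij}\rangle_m$ for some $m$, and exactness pushes this through to show the restriction of $G$ lies in $\overline{\langle (S_{ij})|_{A_i}\rangle}_m$. Now \Cref{lem:proj_dim_flat} applies: because $|A_i|\le\aleph_n$ and $S_{ij}$ is $A_i$-flat, one has $\operatorname{pd}_{A_i}(S_{ij})\le n+1$, hence $(S_{ij})|_{A_i}\in\overline{\langle A_i\rangle}_{n+2}$ and therefore the restriction of $G$ lies in $\overline{\langle A_i\rangle}_{m(n+2)}$. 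Taking $H=A_i\in\operatorname{Perf}(A_i)$, this exhibits $V_{ij}\to U_i$ as compactly bounded in the sense of \Cref{def:compact_boundedness}.

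Finally, feeding the covers $Y=\bigcup_i U_i$ and $f^{-1}(U_i)=\bigcup_j V_{ij}$ into \Cref{cor:comp_bound_affine_local} yields that $f$ is compactly bounded. I expect no genuine obstacle here: the substantive input is entirely concentrated in \Cref{lem:proj_dim_flat} (flat $\Rightarrow$ finite projective dimension over a ring of bounded cardinality), and the only thing to be careful about is the cardinality bookkeeping ensuring the affine charts of $Y$ still have cardinality $\le\aleph_n$. Note that this argument produces only compact, not \emph{strong} compact, boundedness, since the integer $m$ depends on $G$; this is consistent with the statement, which claims only compact boundedness.
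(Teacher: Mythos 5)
Your proposal is correct and is exactly the argument the paper intends: the paper's proof is simply ``Immediate from \Cref{cor:comp_bound_affine_local} and \Cref{lem:proj_dim_flat},'' and your write-up faithfully unpacks this — finite affine covers with the cardinality bookkeeping, flatness of the restricted affine morphisms, finite projective dimension of $S_{ij}$ over $A_i$ via \Cref{lem:proj_dim_flat}, and the resulting bound $\overline{\langle A_i\rangle}_{m(n+2)}$ exactly as in the paper's template argument for \Cref{lem:std_smooth_comp_bound}. No issues.
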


\begin{proof}
    Immediate from \Cref{cor:comp_bound_affine_local} and \Cref{lem:proj_dim_flat}. 
\end{proof}

\section{Zariski descent}
\label{sec:Zariski}

We start by introducing increasing assumptions that ensure the presheaves of triangulated categories we consider behave suitably well.

\begin{hypothesis*}
    Let $\mathcal{T}$ be  $\mathcal{C}$-presheaf of triangulated categories.
    We say $\mathcal{T}$ satisfies
    \begin{enumerate}[label={(}\alph*{)}]
        \item\label[hypothesis]{hyp:1} if it is compactly generated and pullbacks preserve compact objects (equivalently the pushforwards preserve coproducts),
        \item\label[hypothesis]{hyp:2} if, additionally, it admits a $t$-structure, pullbacks are t-exact and $\mathcal{T}(U)^b_c$ is a thick subcategory for all $U\in\mathcal{C}$,
        \item\label[hypothesis]{hyp:3} if, additionally, $\mathcal{T}(U)^c \subseteq\mathcal{T}(U)^b$, i.e.\ the compacts are bounded with respect to the $t$-structure.
    \end{enumerate}
\end{hypothesis*}

\begin{remark}
    For example, $\mathcal{T}(U)^b_c$ being thick holds whenever there exists a compact generator $G\in\mathcal{T}(U)$ and integer $A>0$ with $\operatorname{Hom}(G[A],\mathcal{T}^{\leq 0} (U)\big )=0$, see \cite[Proposition 0.19]{Neeman:2021c}.
    In particular, this holds when $\mathcal{T}(U)$ is approximable.

    The containment $\mathcal{T}(U)^c \subseteq\mathcal{T}(U)^b$ is equivalent to $\mathcal{T}(U)^c \subseteq\mathcal{T}(U)^b_c$, so one can make sense of their quotient.
    However, this is far from automatic and there are counterexamples.
    For example, in the stable homotopy category of topological spectra the sphere spectrum is a compact generator, but not bounded.
\end{remark}

\begin{example}\label{ex:presheaves_on_Zariski_site_satisfy_abc}
    Continuing \Cref{ex:presheaves_on_Zariski_site}, it is well-known that the $X_{Zar}$-presheaf $\mathcal{T}:=D_{\operatorname{qc}}$ satisfies \Cref{hyp:1,hyp:2,hyp:3}.
    Indeed, equipping $D_{\operatorname{qc}}(-)$ with the standard $t$-structure $(D_{\operatorname{qc}}^{\leq 0} (-), D_{\operatorname{qc}}^{\geq 0}(-))$ (these are indeed subpresheaves by flatness) one has
    \begin{displaymath}
        \mathcal{T}(-)^c=\operatorname{Perf}(-)\quad\text{and}\quad\mathcal{T}(-)^b_c=D^b_{\operatorname{coh}}(-).   
    \end{displaymath}
\end{example}

The hypotheses are necessary to ensure we get well-behaved subpresheaves, as in the algebro-geometric setting, which the following lemma shows.

\begin{lemma}
    Let $\mathcal{T}$ be a $\mathcal{C}$-presheaf. 
    Then $\mathcal{T}^c$, $\mathcal{T}^b$ and $\mathcal{T}^b_c$, defined on objects by respectively 
    \begin{displaymath}
        U\mapsto \mathcal{T}(U)^c,\quad U\mapsto \mathcal{T}(U)^b, \quad\text{and}\quad U\mapsto \mathcal{T}(U)^b_c,
    \end{displaymath}
    are subpresheaf of $\mathcal{T}$ when the latter satisfies the appropriate \ref{hyp:1}, \ref{hyp:2} or \ref{hyp:3}.
\end{lemma}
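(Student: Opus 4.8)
The plan is to reduce the assertion to three preservation properties of the pullback functors. A subpresheaf of $\mathcal{T}$ is the datum of a full subcategory of $\mathcal{T}(U)$ for every $U$ together with the restricted structure maps; since $\mathcal{T}(U)^c$, $\mathcal{T}(U)^b$ and $\mathcal{T}(U)^b_c$ are full (triangulated) subcategories of $\mathcal{T}(U)$ under the running hypotheses, and since $2$-functoriality is automatically inherited once the pullbacks restrict, the only thing to check is that for every morphism $f\colon U\to V$ in $\mathcal{C}$ the pullback $f^\ast_{\mathcal{T}}\colon\mathcal{T}(V)\to\mathcal{T}(U)$ carries $\mathcal{T}(V)^c$ into $\mathcal{T}(U)^c$, $\mathcal{T}(V)^b$ into $\mathcal{T}(U)^b$, and $\mathcal{T}(V)^b_c$ into $\mathcal{T}(U)^b_c$, respectively.

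The first of these is precisely part of \Cref{hyp:1}, so there is nothing to prove. For the second, I would use that under \Cref{hyp:2} the pullback $f^\ast$ is $t$-exact; hence $f^\ast(\mathcal{T}(V)^{\leq n})\subseteq\mathcal{T}(U)^{\leq n}$ and $f^\ast(\mathcal{T}(V)^{\geq n})\subseteq\mathcal{T}(U)^{\geq n}$ for every integer $n$, and taking unions over $n$ gives $f^\ast(\mathcal{T}(V)^{-})\subseteq\mathcal{T}(U)^{-}$ and $f^\ast(\mathcal{T}(V)^{+})\subseteq\mathcal{T}(U)^{+}$, whence $f^\ast$ preserves the bounded parts. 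For the third, given $E\in\mathcal{T}(V)^{-}_c$ and an integer $n>0$, pick a distinguished triangle $D\to E\to F\to D[1]$ with $D\in\mathcal{T}(V)^c$ and $F\in\mathcal{T}(V)^{\leq -n}$; applying the triangulated functor $f^\ast$ produces a distinguished triangle $f^\ast D\to f^\ast E\to f^\ast F\to (f^\ast D)[1]$ with $f^\ast D\in\mathcal{T}(U)^c$ (by \Cref{hyp:1}) and $f^\ast F\in\mathcal{T}(U)^{\leq -n}$ (by $t$-exactness). Thus $f^\ast E\in\mathcal{T}(U)^{-}_c$, and intersecting with the bounded part already handled shows $f^\ast$ preserves $\mathcal{T}(-)^b_c=\mathcal{T}(-)^{-}_c\cap\mathcal{T}(-)^b$.

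There is no real obstacle here—the argument is a short diagram-and-bookkeeping exercise—but the one point that merits care is the $\mathcal{T}^b_c$ case, where one must use \emph{both} that pullbacks preserve compactness \emph{and} that they are $t$-exact, so as to control the two outer terms of the approximating triangle simultaneously. The role of \Cref{hyp:2} (in particular thickness of $\mathcal{T}(U)^b_c$) and of \Cref{hyp:3} is only to ensure that the target full subcategories are themselves well-behaved (e.g.\ so that the quotient $\mathcal{T}(U)^b_c/\mathcal{T}(U)^c$ makes sense), not in the preservation step itself.
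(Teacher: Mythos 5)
Your argument is correct and matches the paper's proof: both reduce the claim to the pullbacks preserving compacts (for $\mathcal{T}^c$), being $t$-exact (for $\mathcal{T}^b$), and then applying the pullback to the approximating triangle defining $\mathcal{T}^b_c$, using both properties to control the outer terms. No substantive difference.
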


\begin{proof}
    Let $f\colon X \to Y$ be a morphism in $\mathcal{C}$. 
    By assumption $f^\ast \colon \mathcal{T}(Y) \to \mathcal{T}(X)$ preserves compacts and is $t$-exact.
    Hence, clearly, $\mathcal{T}^c$ and $\mathcal{T}^b$ are subfunctors.
    To check that $\mathcal{T}^b_c$ is a subfunctor, take $F\in \mathcal{T}^b_c (X)$. 
    By definition this means that, for each integer $m>0$, there exist an $E\in\mathcal{T}^c (Y)$ and a morphism $E \to F$ with cone in $\mathcal{T}^{\leq -m}(Y)$. 
    By $t$-exactness it follows that the $\operatorname{cone}(f^\ast E \to f^\ast F)\in\mathcal{T}^{\leq -m}(X)$. As $f^\ast E$ is compact, this shows $f^\ast F\in\mathcal{T}^b_c (U)$.
\end{proof}

A last hypothesis.

\setcounter{HypCounter}{3}
\begin{hypothesis}\label{hyp:4}
    Let $\mathcal{T}$ be a $\mathcal{C}$-presheaf with $t$-structure. 
    We say a morphism $i\colon U\to X$ satisfies \Cref{hyp:4} if $i^\ast\colon \mathcal{T}(X)\to \mathcal{T}(U)$ restricts to essentially dense\footnote{I.e.\ essentially surjective up to direct summands.} functors
    \begin{displaymath}
            \mathcal{T}^b_c(X) \xrightarrow{\  i^\ast \ } \mathcal{T}^b_c (U)\quad\text{and}\quad
            \mathcal{T}^c (X) \xrightarrow{\  i^\ast \ }  \mathcal{T}^c(U).
    \end{displaymath}
\end{hypothesis}

\begin{example}\label{ex:presheaves_on_Zariski_site_satisfies_d}
    Again, \Cref{hyp:4} holds in the setting of \Cref{ex:presheaves_on_Zariski_site}, one even obtains localizations.
    The one for $\operatorname{Perf}$ is well-known and the one for $D^b_{\operatorname{coh}}$ follows by e.g.\ \cite[Lemma 2.2]{Chen:2010} or \cite[Theorem 4.4]{Elagin/Lunts/Schnurer:2020}.
\end{example}

We are almost ready to state and prove the main theorem, but first a few lemmas concerning lifting generation along Verdier quotients.
For these, recall some notions introduced in \cite{Avramov/Buchweitz/Iyengar/Miller:2010}. Let $\mathsf{T}$ be a triangulate category and $\mathsf{S}\subseteq\mathsf{T}$ be subcategory. 
We say $E\in \mathsf{T}$ is \textbf{finitely built} by $\mathsf{S}$ if $E\in\langle \mathsf{S} \rangle$. The \textbf{level of $E$ with respect to $\mathsf{S}$},  denoted $\operatorname{level}^{\mathsf{S}} (E)$, is the smallest $n\geq 0$ for which $E\in\langle \mathsf{S} \rangle_n$.

\begin{lemma}\label{lem:subcategory_to_object_generation}
    Let $\mathsf{T}$ be a triangulated category and $\mathsf{S}\subseteq\mathsf{T}$ be a subcategory. 
    For each $E\in\langle \mathsf{S} \rangle$, there exists an $S\in\langle \mathsf{S} \rangle_1$ with $\operatorname{level}^{\mathsf{S}}(E) = \operatorname{level}^S (E)$.
\end{lemma}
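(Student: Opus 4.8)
My plan is to establish separately the two inequalities relating $\operatorname{level}^{\mathsf{S}}(E)$ and $\operatorname{level}^{S}(E)$, one of which will hold for \emph{every} $S\in\langle\mathsf{S}\rangle_1$ while the other dictates the choice of $S$. The basic tool is a soft monotonicity statement for the operations $\langle-\rangle_m$: if $\mathsf{A}\subseteq\langle\mathsf{B}\rangle_1$, then $\langle\mathsf{A}\rangle_m\subseteq\langle\mathsf{B}\rangle_m$ for every $m\ge 0$. This follows by a routine induction on $m$ straight from the recursive definition, using that $\langle\mathsf{B}\rangle_m$ is closed under $\operatorname{add}$ and, for $m\ge 2$, contains $\operatorname{cone}\phi$ for each $\phi\in\operatorname{Hom}(\langle\mathsf{B}\rangle_{m-1},\langle\mathsf{B}\rangle_1)$. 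Applying it with $\mathsf{A}=\{S\}$ for any $S\in\langle\mathsf{S}\rangle_1$ gives $\langle S\rangle_m\subseteq\langle\mathsf{S}\rangle_m$ for all $m$, hence $\operatorname{level}^{\mathsf{S}}(E)\le\operatorname{level}^{S}(E)$; and applying it whenever one subcategory sits inside another yields the evident monotonicity $\langle\mathsf{A}\rangle_m\subseteq\langle\mathsf{B}\rangle_m$ for $\mathsf{A}\subseteq\mathsf{B}$. It therefore remains to produce a single $S\in\langle\mathsf{S}\rangle_1$ with $E\in\langle S\rangle_n$, where $n$ denotes $\operatorname{level}^{\mathsf{S}}(E)$; then $\operatorname{level}^{S}(E)\le n$ and we are done.

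The idea for producing $S$ is that a fixed object of $\langle\mathsf{S}\rangle_n$ is built from only finitely many objects of $\mathsf{S}$. I would make this precise by proving, by induction on $n$, that every $E\in\langle\mathsf{S}\rangle_n$ already lies in $\langle\mathsf{S}_0\rangle_n$ for some finite set of objects $\mathsf{S}_0$ of $\mathsf{S}$. For $n\le 1$ this is immediate: an object of $\operatorname{add}\mathsf{S}$ is, by definition, a direct summand of a finite coproduct of shifts of objects of $\mathsf{S}$. For $n\ge 2$, write $E$ as a direct summand of $\bigoplus_{j}\operatorname{cone}(\phi_j)[a_j]$ with $\phi_j\colon X_j\to Y_j$, $X_j\in\langle\mathsf{S}\rangle_{n-1}$ and $Y_j\in\langle\mathsf{S}\rangle_1$; the induction hypothesis supplies a finite $\mathsf{S}_0^{(j)}\subseteq\mathsf{S}$ with $X_j\in\langle\mathsf{S}_0^{(j)}\rangle_{n-1}$, and each $Y_j$ is visibly a direct summand of a finite coproduct of shifts of objects of $\mathsf{S}$; let $\mathsf{S}_0$ be the union of these finitely many finite sets. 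Then $X_j\in\langle\mathsf{S}_0\rangle_{n-1}$ and $Y_j\in\langle\mathsf{S}_0\rangle_1$ by monotonicity, so each $\operatorname{cone}(\phi_j)$ lies in $\langle\mathsf{S}_0\rangle_n$ by definition, and hence so does $E$ since $\langle\mathsf{S}_0\rangle_n$ is closed under shifts, finite coproducts and direct summands.

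To conclude, take $n=\operatorname{level}^{\mathsf{S}}(E)$, let $\mathsf{S}_0$ be the finite set produced above, and set $S\colonequals\bigoplus_{T\in\mathsf{S}_0}T$. Since $\mathsf{S}_0$ is finite and $\langle\mathsf{S}\rangle_1=\operatorname{add}\mathsf{S}$ is closed under finite coproducts, $S\in\langle\mathsf{S}\rangle_1$; and each $T\in\mathsf{S}_0$ is a direct summand of $S$, so $\mathsf{S}_0\subseteq\langle S\rangle_1$ and the monotonicity statement gives $\langle\mathsf{S}_0\rangle_m\subseteq\langle S\rangle_m$ for all $m$. In particular $E\in\langle\mathsf{S}_0\rangle_n\subseteq\langle S\rangle_n$, so $\operatorname{level}^{S}(E)\le n=\operatorname{level}^{\mathsf{S}}(E)$, which together with the inequality from the first paragraph yields the claimed equality. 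The only step requiring any care is the induction in the second paragraph — keeping the passage to a finite subcategory compatible with the recursive definition of $\langle-\rangle_n$ — and even that is purely formal; the rest is bookkeeping.
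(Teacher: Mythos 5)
Your proof is correct, and it takes a somewhat different route from the paper's. The paper argues by induction on $n=\operatorname{level}^{\mathsf S}(E)$ directly along the defining tower of triangles: it writes $A\to E\oplus E'\to B$ with $A\in\langle\mathsf S\rangle_n$ and $B\in\langle\mathsf S\rangle_1$, applies the inductive hypothesis to $A$ to get $S'$, sets $S:=S'\oplus B$, and obtains the reverse inequality from the submultiplicativity $\operatorname{level}^{\mathsf S}(E)\le\operatorname{level}^{\mathsf S}(S)\cdot\operatorname{level}^{S}(E)$. You instead decouple the two inequalities: the lower bound $\operatorname{level}^{\mathsf S}(E)\le\operatorname{level}^{S}(E)$ comes from the monotonicity $\langle S\rangle_m\subseteq\langle\mathsf S\rangle_m$ (essentially the same submultiplicativity fact, since $\operatorname{level}^{\mathsf S}(S)=1$), and the upper bound comes from an intermediate finiteness statement --- any $E\in\langle\mathsf S\rangle_n$ already lies in $\langle\mathsf S_0\rangle_n$ for a finite $\mathsf S_0\subseteq\mathsf S$ --- followed by collapsing $\mathsf S_0$ to the single object $\bigoplus_{T\in\mathsf S_0}T$. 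Your induction is therefore on the tower depth of a \emph{witness} for $E\in\langle\mathsf S\rangle_n$ rather than on the level itself, and it sidesteps the paper's parenthetical care about $\operatorname{level}^{\mathsf S}(A)$ being exactly $n$. What your version buys is a cleaner, reusable devissage lemma (finitely many objects of $\mathsf S$ suffice at the same level) and an $S$ that is an honest finite direct sum of shifts-and-summands data drawn from $\mathsf S$ itself; what the paper's version buys is brevity, producing the generator on the fly without the bookkeeping of finite subsets. Both arguments are purely formal and both are valid.
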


\begin{proof}
    When $E=0$ there is nothing to prove, so we assume from now on that this is not the case.
    As $0\neq E\in\langle\mathsf{S}\rangle$, $n:=\operatorname{level}^{\mathsf{S}}(E)>0$ is finite and we prove the claim by induction on this.

    If $n=1$, i.e.\ $E\in\langle\mathsf{S}\rangle_1$, then $S=E$ satisfies the claim.
    So, suppose the claim holds for all objects of $\operatorname{level}\leq n$ and that $\operatorname{level}^{\mathsf{S}} (E) = n+1$.
    By assumption there exists a distinguished triangle 
    \begin{equation}\label{eq:disttria}
        A \to E \oplus E^\prime \to B \to A[1]
    \end{equation}
    with $A\in\langle \mathsf{S} \rangle_n$ and  $B\in\langle \mathsf{S} \rangle_1$. 
    Hence, the induction hypothesis yields an $S^\prime\in \langle\mathsf{S}\rangle_1$ with $\operatorname{level}^{\mathsf{S}} (A) = \operatorname{level}^{S^\prime} (A)$
    (and note that $\operatorname{level}^{\mathsf{S}} (A)$ must equal $n$ as any smaller value would contradict $\operatorname{level}^{\mathsf{S}} (E)=n+1$). 
    Let $S := S^\prime \oplus B$; clearly $S\in\langle \mathsf{S} \rangle_1$ and the triangle \eqref{eq:disttria} gives us $\operatorname{level}^S (E)\leq n+1$. However, as $\operatorname{level}^{\mathsf{S}} (S)=1$, we also have
    \begin{displaymath}
        \begin{aligned}
            n+1 &= \operatorname{level}^{\mathsf{S}} (E) \\&\leq \operatorname{level}^{\mathsf{S}} (S) \operatorname{level}^S (E) \\&= \operatorname{level}^S (E).
        \end{aligned}
    \end{displaymath}
    That is, the level of $E$ with respect to $S$ is exactly $n+1$ as was needed.
\end{proof}

\begin{lemma}\label{lem:finitely_building_verdier_quotient}
    Let $\mathsf{T}$ be a triangulated category, $\mathsf{K}\subseteq\mathsf{T}$ be a thick triangulated subcategory and denote the Verdier quotient by $q:\mathsf{T}\to\mathsf{T}/\mathsf{K}$.
    If $E$ and $G$ are objects in $\mathsf{T}$ with $q(E)\in\langle q(G) \rangle_n$, for some $n\geq 1$, then $E\in\langle \{G\oplus K\mid K\in \mathsf{K}\} \rangle_{6n-3}$.
\end{lemma}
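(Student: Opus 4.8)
The plan is to peel off one layer of the building at a time and translate the finite building in the Verdier quotient back into the original category, paying a constant cost at each layer. More precisely, I would argue by induction on $n$. The content of a single step is the following: given a distinguished triangle in $\mathsf{T}/\mathsf{K}$ witnessing $q(E)\in\langle q(G)\rangle_n$, say $A'\to q(E)\oplus E'\to B'\to A'[1]$ with $A'\in\langle q(G)\rangle_{n-1}$ and $B'\in\langle q(G)\rangle_1$, one has to lift this triangle (or rather the morphism underlying it) to $\mathsf{T}$. The standard fact about Verdier quotients is that any morphism $A'\to B'$ in $\mathsf{T}/\mathsf{K}$ is represented by a roof $A'\leftarrow \tilde A\to B'$ in $\mathsf{T}$ where the first leg has cone in $\mathsf{K}$; completing the second leg to a triangle in $\mathsf{T}$ and comparing with the given triangle in the quotient, together with the octahedral axiom, lets one conclude that $E$ (possibly after adding a summand, which is harmless since we allow $G\oplus K$) is finitely built from a lift of $A'$, a lift of $B'$, and an object of $\mathsf{K}$, with the number of cones used bounded by a fixed small constant.

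First I would set up the base case $n=1$: if $q(E)\in\langle q(G)\rangle_1=\operatorname{add}\{q(G)\}$, then $q(E)$ is a summand of a finite shifted sum $\bigoplus q(G)[a_i]$, so there is an object $P\in\mathsf{T}$ with $E$ a summand of $P$ and $q(P)\cong\bigoplus q(G)[a_i]$; the cone of a suitable map $\bigoplus G[a_i]\to P$ lies in $\mathsf{K}$, whence $P$, and therefore $E$, lies in $\langle\{G\oplus K\mid K\in\mathsf{K}\}\rangle_m$ for some small $m$ (one cone to correct the quotient-isomorphism, so $m\le 3$ should do, which matches $6\cdot 1-3$). Then for the inductive step I would take the triangle above, use \Cref{lem:subcategory_to_object_generation} if convenient to replace $\langle q(G)\rangle_{n-1}$ and $\langle q(G)\rangle_1$ by levels with respect to single objects (this is not strictly necessary but cleans up the summand bookkeeping), lift the connecting morphism by a roof, and run the octahedron. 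By induction the lift of the $(n-1)$-term sits in $\langle\{G\oplus K\}\rangle_{6(n-1)-3}=\langle\{G\oplus K\}\rangle_{6n-9}$, the lift of the $1$-term sits in $\langle\{G\oplus K\}\rangle_3$, and the error object from the roof lies in $\mathsf{K}\subseteq\langle\{G\oplus K\}\rangle_1$; assembling these through the (at most three) cones produced by the octahedral comparison, and using subadditivity of levels under cones, gives $E\in\langle\{G\oplus K\}\rangle_{6n-9+3+\text{const}}$, and one checks the constant works out to make this $\le 6n-3$.

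The main obstacle I expect is the precise bookkeeping of the constant $6$ — i.e.\ verifying that a single descent step really costs at most $6$ (which is what forces the recursion $c(n)\le c(n-1)+6$ with $c(1)\le 3$). This requires being careful about (i) how many distinguished triangles the octahedral axiom actually produces when comparing the lifted triangle with the quotient triangle and its roof, (ii) the summand-closure subtleties hidden in $\operatorname{add}$ versus honest cones, and (iii) the fact that passing from $\langle q(G)\rangle_1$ to an actual object in $\mathsf{T}$ mapping onto it already costs a cone against $\mathsf{K}$. A secondary point to handle cleanly is that the extra summand $E'$ appearing because $\langle q(G)\rangle_n$ is only closed under summands must be absorbed into the $G\oplus K$ generating set without inflating the level; this is why the statement is phrased with $G\oplus K$ rather than with $G$ and $\mathsf{K}$ separately. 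Everything else is a routine, if slightly tedious, diagram chase with triangles.
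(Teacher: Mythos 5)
Your proposal is correct and follows essentially the same route as the paper: induction on $n$, with the base case handled by a roof (costing the constant $3=6\cdot1-3$) and the inductive step by lifting the defining triangle to $\mathsf{T}$ and comparing the lifted middle term with $E\oplus E'$ via another roof, yielding the recursion $c(n)\le c(n-1)+6$. One small imprecision: in your base case a quotient-isomorphism is in general only represented by a two-legged roof (both cones in $\mathsf{K}$), not by a single morphism $\bigoplus G[a_i]\to P$ with cone in $\mathsf{K}$, but the roof version gives exactly the bound $3$ you state, so nothing breaks.
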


\begin{proof}
    We prove the claim by induction on $n$. 
    
    Suppose $n=1$, i.e.\ $q(E)\in\langle q(G) \rangle_1$. 
    Then there exists an object $E^\prime\in\mathsf{T}$ and an isomorphism $q(E\oplus E^\prime) = \oplus_{i\in I} q(G) [s_i]$ (for some finite set $I$). 
    This isomorphism corresponds to a roof 
    \begin{displaymath}
        E\oplus E^\prime \xleftarrow{\quad f\quad} F\xrightarrow{\quad g\quad}\bigoplus_{i\in I} q(G) [s_i]\quad\text{in $\mathsf{T}$},
    \end{displaymath}
    where $\operatorname{cone}(f)$, $\operatorname{cone}(g)\in\mathsf{K}$. 
    Thus, $F$ belongs to $\langle G \oplus \mathsf{K} \rangle_2$, and so $E$ is in $\langle G \oplus \mathsf{K} \rangle_3$.

    Next, suppose the claim holds for all integers $\leq n$ and $q(E)\in\langle q(G) \rangle_{n+1}$. 
    Then, by definition, there exists a distinguished triangle
    \begin{equation}\label{eq:finitely_building_verdier_quotient}
        q(A) \to q(E \oplus E^\prime) \to q(B) \to q(A)[1]
    \end{equation}
    with $q(A)\in\langle q(G) \rangle_n$ and $q(B)\in\langle q(G) \rangle_1$.
    Moreover, there exists a distinguished triangle
    \begin{displaymath}
        A^\prime \to E^{\prime\prime} \to B^\prime \to A^\prime[1]\quad\text{ (in $\mathsf{T}$)}
    \end{displaymath}
    whose image under the quotient functor $q \colon \mathsf{T} \to \mathsf{T}/\mathsf{K}$ is isomorphism to the triangle $\eqref{eq:finitely_building_verdier_quotient}$.
    Consequently, $q(A^\prime)=q(A)\in\langle q(G) \rangle_n$ and $q(B^\prime)=q(B)\in\langle q(G) \rangle_1$ and the induction hypothesis yields $A^\prime\in\langle G \oplus \mathsf{K} \rangle_{6n-3}$ and $B^\prime\in\langle G \oplus \mathsf{K} \rangle_3$. 
    Thus, ${E}^{\prime\prime}\in\langle G \oplus \mathsf{K} \rangle_{6n+1}$ and a similar argument with a roof, corresponding to $q(E\oplus E^\prime)=q(E^{\prime\prime})$, shows $E\in\langle G \oplus \mathsf{K} \rangle_{6n+3}$.
\end{proof}

The following now follows immediately from the previous two lemmas. 

\begin{lemma}\label{lem:finitely_build_a_la_verdier_quotient_refined}
    Let $\mathsf{T}$ be a triangulated category, $\mathsf{K}\subseteq\mathsf{T}$ be a thick subcategory and denote the Verdier quotient by $q:\mathsf{T}\to\mathsf{T}/\mathsf{K}$.
    If $E$ and $G$ are objects of $\mathsf{T}$ such that $q(E)$ belongs to $\langle q(G) \rangle_n$, for some $n\geq 1$, then there exists $K\in\langle \mathsf{K} \rangle_1$ with $E\in\langle G\oplus K \rangle_{6n-3}$.
\end{lemma}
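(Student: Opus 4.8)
The plan is to simply chain \Cref{lem:finitely_building_verdier_quotient} and \Cref{lem:subcategory_to_object_generation}; no new ideas are needed, which is why the statement is phrased as an immediate consequence. Set $\mathsf{S}:=\{G\oplus K'\mid K'\in\mathsf{K}\}$. From $q(E)\in\langle q(G)\rangle_n$ and \Cref{lem:finitely_building_verdier_quotient} we get $E\in\langle\mathsf{S}\rangle_{6n-3}$, so in particular $\operatorname{level}^{\mathsf{S}}(E)\leq 6n-3$.

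Next I would apply \Cref{lem:subcategory_to_object_generation} to the subcategory $\mathsf{S}$: it produces a single object $S\in\langle\mathsf{S}\rangle_1=\operatorname{add}\mathsf{S}$ with $\operatorname{level}^{S}(E)=\operatorname{level}^{\mathsf{S}}(E)\leq 6n-3$, that is, $E\in\langle S\rangle_{6n-3}$.

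It then remains to realize $S$ inside $\langle G\oplus K\rangle_1$ for a suitable single object $K\in\langle\mathsf{K}\rangle_1$. Since $S\in\operatorname{add}\mathsf{S}$, it is a direct summand of a finite coproduct $\bigoplus_{i}(G\oplus K_i)[s_i]$ with $K_i\in\mathsf{K}$, and this coproduct splits as $\big(\bigoplus_i G[s_i]\big)\oplus K$ where $K:=\bigoplus_i K_i[s_i]\in\operatorname{add}\mathsf{K}=\langle\mathsf{K}\rangle_1$. As $\bigoplus_i G[s_i]\in\operatorname{add}\{G\}$, we have $\big(\bigoplus_i G[s_i]\big)\oplus K\in\operatorname{add}\{G,K\}=\operatorname{add}\{G\oplus K\}$, hence $S\in\langle G\oplus K\rangle_1$ and therefore $\langle S\rangle_{6n-3}\subseteq\langle G\oplus K\rangle_{6n-3}$. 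Combined with the previous paragraph this gives $E\in\langle G\oplus K\rangle_{6n-3}$, as desired.

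The only step requiring any care is the last paragraph's bookkeeping with $\operatorname{add}$: that $\operatorname{add}$ absorbs the finitely many shifts and coproducts occurring in an element of $\langle\mathsf{S}\rangle_1$, and that $\operatorname{add}\{G,K\}=\operatorname{add}\{G\oplus K\}$ because $G$ and $K$ are summands of $G\oplus K$ while $G\oplus K$ is a coproduct of the two. This is exactly where the direct-summand-closedness built into the $\operatorname{add}$ convention of \Cref{sec:recol} is used, and it is also the reason the conclusion must allow $K\in\langle\mathsf{K}\rangle_1$ rather than $K\in\mathsf{K}$.
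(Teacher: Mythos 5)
Your proof is correct and follows exactly the route the paper intends: the paper states that the lemma ``follows immediately from the previous two lemmas,'' and your argument is precisely that chaining of \Cref{lem:finitely_building_verdier_quotient} and \Cref{lem:subcategory_to_object_generation}, with the final $\operatorname{add}$-bookkeeping spelled out correctly.
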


We can now finally state and prove the main theorem.

\begin{theorem}\label{thm:generators_presheaf}
    Let $\mathcal{T}$ be $\mathcal{C}$-presheaf of triangulated categories, 
    \begin{displaymath}
        \begin{tikzcd}
            {W} & V \\
            U & X
            \arrow[from=1-1, to=1-2]
            \arrow[from=1-1, to=2-1]
            \arrow["j", from=1-2, to=2-2]
            \arrow["i"', from=2-1, to=2-2]
            \arrow["k"{description}, from=1-1, to=2-2]
        \end{tikzcd}
    \end{displaymath}
    be a Zariski square in $\mathcal{C}$ and suppose either
    \begin{enumerate}
        \item\label{item:generators_presheaf1} \Cref{hyp:1} holds, $\mathcal{S}=\mathcal{T}^c$ and $i_\ast$, $j_\ast$ and $k_\ast$ are compactly bounded,
        \item\label{item:generators_presheaf2} \Cref{hyp:2} holds, $\mathcal{S}=\mathcal{T}^b_c$, $i_\ast$, $j_\ast$ and $k_\ast$ are coherently bounded and $\mathcal{T}^{\geq 0}(X)$ is closed under coproducts,
        \item\label{item:generators_presheaf3} \Cref{hyp:3} holds, $\mathcal{S}=\mathcal{T}^b_c/\mathcal{T}^c$, $i_\ast$, $j_\ast$ and $k_\ast$ are both coherently and strongly\footnote{In fact, the `strongly' is not needed for the claim concerning classical generators below.} compactly bounded and $\mathcal{T}^{\geq 0}(X)$ is closed under coproducts. 
    \end{enumerate}
    If $\mathcal{S}(U)$, $\mathcal{S}(V)$ and $\mathcal{S}(W)$ admit strong (resp.\ classical) generators, then $\mathcal{S}(X)$ admits a strong (resp.\ classical) generator.
    
    Furthermore, suppose the morphisms in the square satisfy \Cref{hyp:4}.
    Then the converse also holds and the condition on $W$ is automatic, i.e.\ $\mathcal{S}(U)$ and $\mathcal{S}(V)$ admit strong (resp.\ classical) generators if and only if $\mathcal{S}(X)$ admits a strong (resp.\ classical) generator.       
\end{theorem}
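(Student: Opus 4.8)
The plan is to deduce all three cases of the forward implication from a single template built on the Mayer--Vietoris triangle of \Cref{lem:MVtriangle}, and then to read off the converse from \Cref{hyp:4}. First I would fix strong (resp.\ classical) generators $G_U,G_V,G_W$ of $\mathcal{S}(U),\mathcal{S}(V),\mathcal{S}(W)$; as $\mathcal{T}$ is compactly generated it admits adjoints, so every $C\in\mathcal{T}(X)$ sits in a triangle
\[
    C \to i_\ast i^\ast C \oplus j_\ast j^\ast C \to k_\ast k^\ast C \to C[1].
\]
Take $C$ in $\mathcal{S}(X)$ --- in case \eqref{item:generators_presheaf3}, a lift $C\in\mathcal{T}^b_c(X)$ of a given object of $\mathcal{S}(X)=\mathcal{T}^b_c(X)/\mathcal{T}^c(X)$. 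The restrictions $i^\ast C,j^\ast C,k^\ast C$ lie in the relevant local categories (the subpresheaf property), hence are finitely built by the local generators, with a uniform step count when the latter are strong; in case \eqref{item:generators_presheaf3} one first applies \Cref{lem:finitely_build_a_la_verdier_quotient_refined} to upgrade $q_U(i^\ast C)\in\langle q_U(G_U)\rangle_m$ to an honest building $i^\ast C\in\langle G_U\oplus K_U\rangle_{6m-3}$ in $\mathcal{T}^b_c(U)$ with $K_U\in\mathcal{T}^c(U)$, and similarly at $V$ and $W$. Since $i_\ast,j_\ast,k_\ast$ are triangulated they carry a building in $m$ steps to a building in $m$ steps, so feeding the results into the triangle exhibits $C$ as finitely built, inside $\mathcal{T}(X)$, from $i_\ast G_U,j_\ast G_V,k_\ast G_W$ (and, in case \eqref{item:generators_presheaf3}, also from $i_\ast K_U,j_\ast K_V,k_\ast K_W$), in a number of steps uniform in $C$ when the local generators are strong. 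Now compact, resp.\ coherent, boundedness of $i_\ast$ produces $H_i$ in $\mathcal{T}^c(X)$, resp.\ in $\mathcal{T}^b_c(X)$, with $i_\ast G_U\in\overline{\langle H_i\rangle}_{n_i}$ (and in case \eqref{item:generators_presheaf3}, strong compact boundedness also produces $H_i'\in\mathcal{T}^c(X)$ with $i_\ast K_U\in\overline{\langle H_i'\rangle}_{n_0}$ in a number of steps $n_0$ not depending on $K_U$), and likewise at $V,W$; substituting and setting $H:=H_i\oplus H_j\oplus H_k$ and $H':=H_i'\oplus H_j'\oplus H_k'\in\mathcal{T}^c(X)$, we obtain $C\in\overline{\langle H\rangle}_{\tilde N}$ (resp.\ $C\in\overline{\langle H\oplus H'\rangle}_{\tilde N}$ in case \eqref{item:generators_presheaf3}) with $H$ fixed and $\tilde N$ uniform in the strong case.

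To conclude each case, one passes from the `big' building $\overline{\langle-\rangle}_{\tilde N}$ (arbitrary coproducts) to an honest finite building $\langle-\rangle_{N'}$ for $C$. In case \eqref{item:generators_presheaf1} this is the classical fact that a compact object lying in $\overline{\langle H\rangle}_{\tilde N}$, $H$ compact, lies in $\langle H\rangle_{N'}$; in cases \eqref{item:generators_presheaf2} and \eqref{item:generators_presheaf3} it is the analogue for objects of $\mathcal{T}^b_c(X)$, valid because $\mathcal{T}^{\geq 0}(X)$ is closed under coproducts, so that truncation commutes with coproducts and the building can be cut down to a bounded window --- this is the sole role of that hypothesis, compare \cite{Neeman:2021b,DeDeyn/Lank/ManaliRahul:2024a}. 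As $\mathcal{T}^b_c(X)$ is thick (\Cref{hyp:2}) this building lives inside it, so in case \eqref{item:generators_presheaf3} one may apply $q_X$; since $H'\in\ker(q_X)$ this leaves $q_X(C)\in\langle q_X(H)\rangle_{N'}$. In every case $H$ (resp.\ $q_X(H)$) lies in $\mathcal{S}(X)$ while $\langle H\rangle\subseteq\mathcal{S}(X)$ is automatic, so $\mathcal{S}(X)=\langle H\rangle_{N'}$ (resp.\ $\langle q_X(H)\rangle_{N'}$) exhibits a strong generator; dropping the uniformity of $N'$ gives the classical statement, for which plain rather than strong compact boundedness already suffices in case \eqref{item:generators_presheaf3}, matching the footnote.

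For the converse, assume $\mathcal{S}(X)=\langle G\rangle_N$ (resp.\ $\langle G\rangle$) and that the morphisms in the square satisfy \Cref{hyp:4}. The functor $i^\ast$ --- or, in case \eqref{item:generators_presheaf3}, the functor $\mathcal{S}(X)\to\mathcal{S}(U)$ it induces on Verdier quotients, which exists since $i^\ast$ respects $\mathcal{T}^c$ and $\mathcal{T}^b_c$ --- is triangulated, so carries $\langle G\rangle_N$ into $\langle i^\ast G\rangle_N$, and by \Cref{hyp:4} it is essentially dense onto $\mathcal{S}(U)$; since $\langle-\rangle_N$ is closed under direct summands, every object of $\mathcal{S}(U)$ lies in $\langle i^\ast G\rangle_N$, whence $\mathcal{S}(U)=\langle i^\ast G\rangle_N$, and likewise for $V$. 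As $k$ factors through $i$, essential density of $k^\ast$ forces that of the pullback along $W\to U$, so the same argument shows $\mathcal{S}(W)$ is generated, which is why the hypothesis on $W$ in the first part is automatic; combined with the forward implication, this yields the stated equivalence.

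The step I expect to be the main obstacle is this `big-to-small' reduction: everything else is bookkeeping with the Mayer--Vietoris triangle and with triangulated functors, but cutting $\overline{\langle H\rangle}_{\tilde N}$ down to $\langle H\rangle_{N'}$ genuinely requires the compactness and $t$-structure hypotheses. The second delicate point, specific to case \eqref{item:generators_presheaf3}, is arranging that the object-dependent error term produced by (strong) compact boundedness lands in $\ker(q_X)$, so that it disappears on passing to the singularity-type quotient and the resulting generator is independent of the chosen object.
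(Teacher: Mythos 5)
Your proposal is correct and follows essentially the same route as the paper's proof: Mayer--Vietoris triangle, lifting generation through the Verdier quotient via \Cref{lem:finitely_build_a_la_verdier_quotient_refined}, (strong) compact/coherent boundedness applied to the fixed local generators (with the uniformity of strong compact boundedness reserved for the varying compact error terms in case \eqref{item:generators_presheaf3}), and the big-to-small reduction of \cite[Lemma 2.14]{DeDeyn/Lank/ManaliRahul:2024a}, with the converse read off from \Cref{hyp:4}. The only cosmetic difference is that the paper phrases the conclusion of case \eqref{item:generators_presheaf3} as $\langle G\oplus\mathcal{T}^c(X)\rangle_l=\mathcal{T}^b_c(X)$ rather than explicitly applying $q_X$ at the end, which is equivalent to what you do.
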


\begin{proof}
    We prove the claims concerning strong generators, the analogous claims for classical generators follows by allowing $n=\infty$ in the $\langle\dots\rangle_n$'s below.
    Moreover, we only prove case \eqref{item:generators_presheaf3} as the other two are similar.

    Thus, let $\mathcal{S}=\mathcal{T}^b_c/\mathcal{T}^c$ and let $q_?\colon \mathcal{T}^b_c(?)\to \mathcal{S}(?)$ denote the various quotient functor.
    Suppose $\mathcal{S}(U)$, $\mathcal{S}(V)$ and $\mathcal{S}(V)$ admit respective strong generators.
    To show $\mathcal{S}(X)$ is strongly generated, it is equivalent to show there exists a $G\in\mathcal{T}^b_c(X)$ and integer $l$ with $\langle G \oplus \mathcal{T}^c(X) \rangle_l = \mathcal{T}^b_c(X)$.

    Let $E\in\mathcal{T}^b_c(X)$ be arbitrary. 
    Denote the strong generator of $\mathcal{S}(U)$ by $q_U(G_U)$ and suppose it generates with $n^\prime-1$ cones.
    As $q_U(i^\ast E)\in \langle q_U(G_U)\rangle_{n^\prime}$, it follows by \Cref{lem:finitely_build_a_la_verdier_quotient_refined} that there exists $C_U\in\mathcal{T}^c(U)$ with $i^\ast E \in \langle G_U \oplus C_U \rangle_n$ where $n= 6n^\prime-3$.
    Next, note that by the boundedness assumptions we can find a $G\in\mathcal{T}^b_c(X)$, $C\in \mathcal{T}^c(X)$ and an integer $m$ (that does not depend on $E$!) such that
    \begin{displaymath}
        i_\ast G_U\in\overline{\langle G\rangle}_m\quad\text{and}\quad
        i_\ast C_U\in\overline{\langle C\rangle}_m.
    \end{displaymath}
    It follows that 
    \begin{displaymath}
        i_\ast i^\ast E \in \langle i_\ast G_U \oplus i_\ast C_U \rangle_n\subseteq \overline{\langle G \oplus C \rangle}_{nm}.
    \end{displaymath}
    Furthermore, adding summands to $G$, $C$ and increasing the integers if necessary, we may assume the same holds for $j$ and $k$.
    To finish, it suffices to observe that, by \Cref{lem:MVtriangle}, there is a distinguished triangle
    \begin{displaymath}
        E \to i_\ast i^\ast E \oplus  j_\ast j^\ast E \to k_\ast k^\ast E\to E [1].
    \end{displaymath}    
    Hence, with $l=2nm$ independent of $E$, we have $E\in\overline{\langle G \oplus C \rangle}_l$.
    It follows, by e.g.\ \cite[Lemma 2.14]{DeDeyn/Lank/ManaliRahul:2024a}, that $E\in\langle G\oplus C\rangle_{l}\subseteq \langle G \oplus \mathcal{T}^c(X) \rangle_l$.

    For the converse direction assume \Cref{hyp:4} holds.
    Clearly, the induced functor $i^\ast \colon \mathcal{S}(X) \to \mathcal{S}(U)$ is also essentially dense.
    So, if $G$ is a strong generator for $\mathcal{S}(X)$, then $i^\ast G$ is a strong generator for $\mathcal{S}(U)$.
    A similar reasoning holds for $j$. 
    Additionally, this also implies that $\mathcal{S}(W)$ automatically admits a strong generator whenever either $\mathcal{S}(U)$ or $\mathcal{S}(V)$ do.
\end{proof}

\begin{remark}\label{rmk:extending_to_non_zariski}
    It is worthwhile to observe that in the first part of \Cref{thm:generators_presheaf} one can replace the condition of the square being Zarsiki, by the condition that it induces a triangle as in \Cref{lem:MVtriangle}. 
    Such triangles exist more generally; e.g.\ for any Mayer--Vietoris square as in \cite{Hall/Rydh:2017}, see \cite[Proposition 5.9 (1)]{Hall/Rydh:2017}; in particular, this holds for `\'{e}tale neighborhoods' \cite[Example 5.6]{Hall/Rydh:2017}.
\end{remark}

\begin{remark}\label{rmk:square_thm_weakening}
    Of course, \Cref{thm:generators_presheaf} holds more generally for subpresheaves of $\mathcal{T}$ different from $\mathcal{T}^c$ and $\mathcal{T}^b_c$, granted the necessary assumptions are in place so that the proof goes through.
\end{remark}

The following, to the best of our knowledge, is new (e.g. tells us existence of strong generation for the singularity category is Zariski local).

\begin{corollary}\label{cor:descent_Dsg}
    Let $X$ is a separated Noetherian scheme. Then $D_{\operatorname{sg}}(X)$ admits a strong (resp.\ classical) generator if and only if there is an open cover $X=\cup_i U_i$ with each $D_{\operatorname{sg}}(U_i)$ admitting a strong (resp.\ classical) generator.
\end{corollary}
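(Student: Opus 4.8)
The plan is to deduce this from \Cref{thm:generators_presheaf}\eqref{item:generators_presheaf3} applied to the presheaf $\mathcal{T} := D_{\operatorname{qc}}$ on the small Zariski site of the relevant scheme, for which $\mathcal{S} := \mathcal{T}^b_c/\mathcal{T}^c = D^b_{\operatorname{coh}}/\operatorname{Perf} = D_{\operatorname{sg}}$ by \Cref{ex:presheaves_on_Zariski_site_satisfy_abc}. First I would verify the standing hypotheses of that case. By \Cref{ex:presheaves_on_Zariski_site_satisfy_abc} the presheaf $D_{\operatorname{qc}}$ on $X_{Zar}$ satisfies \Cref{hyp:3} with the standard $t$-structure, and $D_{\operatorname{qc}}^{\geq 0}(X)$ is closed under coproducts since cohomology commutes with coproducts in $\operatorname{QCoh}(X)$. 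By \Cref{ex:presheaves_on_Zariski_site_satisfies_d} every open immersion satisfies \Cref{hyp:4}. Finally, for an open cover $X = U\cup V$ with $W = U\cap V$, the resulting square is a Zariski $\mathcal{T}$-square in the sense of \Cref{def:ZMV_square}: the inclusions $i$, $j$ are $D_{\operatorname{qc}}$-flat monomorphisms by \Cref{ex:T-flatness_in_AG}, and if $F\in D_{\operatorname{qc}}(X)$ restricts to zero on both $U$ and $V$ then $F$ is supported on $(X\setminus U)\cap(X\setminus V) = X\setminus(U\cup V) = \varnothing$, hence $F = 0$, which is condition \eqref{item:ZMV_square2}. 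Moreover, since $X$ is Noetherian every open is quasi-compact, so the pushforwards $i_\ast$, $j_\ast$, $k_\ast$ along the three (quasi-compact) open immersions into the separated Noetherian scheme $X$ are both strongly compactly bounded and coherently bounded by \Cref{ex:of_boundedness}.

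With the hypotheses in place, I would first reduce an arbitrary open cover to one by two opens. As $X$ is Noetherian, hence quasi-compact, we may pass to a finite subcover $X = U_1\cup\cdots\cup U_n$ and induct on $n$. The cases $n\le 1$ are trivial; for $n = 2$, \Cref{thm:generators_presheaf}\eqref{item:generators_presheaf3} applied to the Zariski square on $X = U_1\cup U_2$ with $W = U_1\cap U_2$ gives a strong (resp.\ classical) generator of $D_{\operatorname{sg}}(X)$ once $D_{\operatorname{sg}}(U_1)$ and $D_{\operatorname{sg}}(U_2)$ have one (the hypothesis on $W$ being automatic by the ``furthermore'' clause, since \Cref{hyp:4} holds for open immersions). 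For $n > 2$, set $U := U_1\cup\cdots\cup U_{n-1}$ and $V := U_n$: the scheme $U$ is again separated Noetherian, and it is covered by $U_1,\dots,U_{n-1}$, so the inductive hypothesis supplies a strong (resp.\ classical) generator of $D_{\operatorname{sg}}(U)$; one then applies the $n = 2$ case to $X = U\cup V$.

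For the converse (``only if'') direction, if $D_{\operatorname{sg}}(X)$ admits a strong (resp.\ classical) generator $G$, then for any open $U\subseteq X$ the restriction functor $i^\ast\colon D_{\operatorname{sg}}(X)\to D_{\operatorname{sg}}(U)$ is essentially dense -- this is the ``furthermore'' clause of \Cref{thm:generators_presheaf}, or directly \Cref{ex:presheaves_on_Zariski_site_satisfies_d} passed to Verdier quotients -- whence $i^\ast G$ is a strong (resp.\ classical) generator of $D_{\operatorname{sg}}(U)$. Thus any open cover of $X$ works, and we are done.

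The argument is almost entirely bookkeeping, since the substantive content -- Mayer--Vietoris triangles, coherent boundedness of finite-type morphisms, strong compact boundedness of open immersions, and the lifting lemmas across Verdier quotients -- is already packaged inside \Cref{thm:generators_presheaf}. The only point I expect to require a moment's care is the boundedness of $k_\ast$, as $W = U\cap V$ need not be affine; but $W$ is still a quasi-compact separated open of $X$, so \Cref{ex:of_boundedness} applies verbatim, or one notes that $k$ factors as the composite of the quasi-compact open immersions $W\hookrightarrow U\hookrightarrow X$ and invokes closure of the boundedness conditions under composition.
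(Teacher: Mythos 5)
Your proposal is correct and is precisely the argument the paper intends: apply \Cref{thm:generators_presheaf}\eqref{item:generators_presheaf3} to the $X_{Zar}$-presheaf $D_{\operatorname{qc}}$ (using \Cref{ex:presheaves_on_Zariski_site_satisfy_abc,ex:presheaves_on_Zariski_site_satisfies_d} for \Cref{hyp:3,hyp:4}, \Cref{ex:of_boundedness} for the boundedness of the pushforwards along the quasi-compact open immersions, and the ``furthermore'' clause for the converse and the condition on $W$), reducing to a finite subcover by Noetherianity and inducting. The paper's proof is a one-line citation of exactly these ingredients, so you have simply spelled out the same route in full detail.
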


\begin{proof}
    Apply the theorem, by inducting over a finite subcover, to the $X_{Zar}$-presheaf $\mathcal{T}=D_{\operatorname{qc}}$ taking note of $\mathcal{T}^b_c/\mathcal{T}^c=D_{\operatorname{sg}}$, \Cref{ex:presheaves_on_Zariski_site_satisfy_abc,ex:presheaves_on_Zariski_site_satisfies_d}.
\end{proof}

Moreover, \Cref{thm:generators_presheaf} allows us to recover the previous results in the literature that were proven along the same lines.

\begin{corollary}[{\cite[Theorem D]{Lank:2024}}]\label{cor:lank2024}
   Let $X$ be a separated Noetherian scheme. Then $D^b_{\operatorname{coh}}(X)$ admits a strong generator if and only if $D^b_{\operatorname{coh}}(U)$ admits a strong generator for each affine open $U\subseteq X$.
\end{corollary}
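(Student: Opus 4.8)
The plan is to deduce this from \Cref{thm:generators_presheaf} \eqref{item:generators_presheaf2}, applied to the $X_{Zar}$-presheaf $\mathcal{T}:=D_{\operatorname{qc}}$, for which $\mathcal{S}=\mathcal{T}^b_c=D^b_{\operatorname{coh}}$ by \Cref{ex:presheaves_on_Zariski_site_satisfy_abc}. First I would check that the required hypotheses are in place. By \Cref{ex:presheaves_on_Zariski_site_satisfy_abc,ex:presheaves_on_Zariski_site_satisfies_d}, the presheaf $D_{\operatorname{qc}}$ satisfies \Cref{hyp:2} and the inclusions in $X_{Zar}$ satisfy \Cref{hyp:4}; moreover $D_{\operatorname{qc}}^{\geq 0}(X)$ is closed under coproducts as it is cut out by a vanishing condition on cohomology sheaves, and open immersions are $\mathcal{T}$-flat monomorphisms by \Cref{ex:T-flatness_in_AG}. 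Finally, any open immersion between separated Noetherian schemes is of finite type, hence coherently bounded by \Cref{ex:of_boundedness}; this controls the pushforwards $i_\ast$, $j_\ast$, $k_\ast$ of the maps in any Zariski $\mathcal{T}$-square built from two opens of $X$.

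The next point is that, for opens $U,V\subseteq X$ with $X=U\cup V$, the square with corners $U\cap V$, $U$, $V$, $X$ is a Zariski $\mathcal{T}$-square: condition \eqref{item:ZMV_square1} is the previous paragraph, and condition \eqref{item:ZMV_square2} holds because an object of $D_{\operatorname{qc}}(X)$ vanishing on both $U$ and $V$ is supported on $X\setminus(U\cup V)=\varnothing$, hence is zero. With this in hand the forward implication is immediate: for an affine open $U\subseteq X$ the restriction $D^b_{\operatorname{coh}}(X)\to D^b_{\operatorname{coh}}(U)$ is a Verdier localization by \Cref{ex:presheaves_on_Zariski_site_satisfies_d}, in particular essentially dense, so it carries a strong generator to a strong generator (exactly as in the converse part of the proof of \Cref{thm:generators_presheaf}).

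For the converse I would argue by induction on the size of a cover. Since $X$ is Noetherian it is quasi-compact, so we may choose a finite affine open cover $X=U_1\cup\dots\cup U_n$; by hypothesis each $D^b_{\operatorname{coh}}(U_i)$ admits a strong generator. The case $n=1$ is trivial. For the inductive step put $U:=U_1\cup\dots\cup U_{n-1}$ and $V:=U_n$. Every affine open of $U$ is an affine open of $X$, so the induction hypothesis applies to $U$ and yields a strong generator for $D^b_{\operatorname{coh}}(U)$, while $D^b_{\operatorname{coh}}(V)$ has one by assumption. Applying \Cref{thm:generators_presheaf} \eqref{item:generators_presheaf2} to the Zariski $\mathcal{T}$-square attached to $(U,V)$---whose hypothesis on $W=U\cap V$ is automatic because \Cref{hyp:4} holds---then produces a strong generator for $D^b_{\operatorname{coh}}(X)$, completing the induction.

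I do not expect a genuine obstacle: the real content sits in \Cref{thm:generators_presheaf} together with the fact, due to Aoki, that finite type morphisms of separated Noetherian schemes are coherently bounded, and everything else is the bookkeeping of the induction plus the (standard) verification that $D_{\operatorname{qc}}$ on $X_{Zar}$ satisfies \Cref{hyp:1,hyp:2,hyp:4}. The only point requiring a word of care is that the intermediate union $U_1\cup\dots\cup U_{n-1}$ need not be affine, but this is harmless since \Cref{thm:generators_presheaf} only asks for strong generators on the members of the cover, and these are inherited from the affine opens $U_i$ contained in that union.
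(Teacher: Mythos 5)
Your proposal is correct and follows exactly the route the paper intends (cf.\ the proof of \Cref{cor:descent_Dsg}): apply \Cref{thm:generators_presheaf}\eqref{item:generators_presheaf2} to the $X_{Zar}$-presheaf $D_{\operatorname{qc}}$ with $\mathcal{S}=\mathcal{T}^b_c=D^b_{\operatorname{coh}}$, using Aoki's coherent boundedness for the open immersions and inducting over a finite affine subcover, with \Cref{hyp:4} handling both the converse and the condition on the intersections. No gaps.
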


\begin{remark}\label{rmk:lank2024classical}
    One can, of course, replace strong generator by classical generator in \Cref{cor:lank2024}.
\end{remark}

\begin{corollary}[{\cite[Theorem 0.5]{Neeman:2021b} \& \cite[Corollary 7]{Stevenson:2025}}]\label{cor:Neeman2021}
    Let $X$ be a quasi-compact separated scheme. 
    Then $\operatorname{Perf}(X)$ admits a strong generator if and only if $X$  can be covered by open affine subschemes $\operatorname{Spec}(R_i)$ with each $\operatorname{Perf}(R_i)$ admitting a strong generator.
    When $X$ is coherent (i.e. $\mathcal{O}_X$ is a coherent sheaf), the latter is equivalent with  each $R_i$ being of finite weak global dimension.
    Additionally, when $X$ is Noetherian this is equivalent with each $R_i$ being of finite global dimension.
\end{corollary}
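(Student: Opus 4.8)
The statement splits into a formal ``Zariski-local'' reduction and an affine input, and I would handle these in turn.

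For the Zariski-local part, the plan is to apply case \eqref{item:generators_presheaf1} of \Cref{thm:generators_presheaf} to the $X_{Zar}$-presheaf $\mathcal{T}=D_{\operatorname{qc}}$. By \Cref{ex:presheaves_on_Zariski_site_satisfy_abc} this presheaf satisfies \Cref{hyp:1} with $\mathcal{T}^c=\operatorname{Perf}$; by \Cref{ex:presheaves_on_Zariski_site_satisfies_d} the morphisms of a Zariski square satisfy \Cref{hyp:4}; and by \Cref{ex:of_boundedness} the pushforward along a quasi-compact open immersion between quasi-compact separated schemes is (strongly) compactly bounded, so the boundedness hypotheses on $i_\ast$, $j_\ast$, $k_\ast$ are automatic for every Zariski square of quasi-compact separated schemes. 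Thus the full force of \Cref{thm:generators_presheaf}, including its converse, is available for such squares.

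Next I would run the induction. Given an affine open cover of $X$, quasi-compactness lets me pass to a finite subcover $X=\operatorname{Spec}(R_1)\cup\dots\cup\operatorname{Spec}(R_n)$, and I induct on $n$, the case $n=1$ being trivial. For the step, set $U=\operatorname{Spec}(R_n)$ and $V=\operatorname{Spec}(R_1)\cup\dots\cup\operatorname{Spec}(R_{n-1})$; since $X=U\cup V$ and $X$ is separated, the square with corners $U\cap V$, $U$, $V$, $X$ is a Zariski square of quasi-compact separated schemes, and each intersection $\operatorname{Spec}(R_i)\cap\operatorname{Spec}(R_n)$ is again affine, so $U\cap V$ carries an affine cover by $n-1$ affine opens. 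Moreover, by \Cref{hyp:4} (for $D_{\operatorname{qc}}$, restriction along an open immersion is even a Verdier localization up to summands), the restriction of a strong generator of $\operatorname{Perf}(\operatorname{Spec}(R_i))$ is a strong generator of $\operatorname{Perf}(\operatorname{Spec}(R_i)\cap\operatorname{Spec}(R_n))$, so the inductive hypothesis applies to both $V$ and $U\cap V$; the forward implication of \Cref{thm:generators_presheaf} then yields a strong generator of $\operatorname{Perf}(X)$. Conversely, if $\operatorname{Perf}(X)$ has a strong generator then, since \Cref{hyp:4} holds, its restriction to any affine open $\operatorname{Spec}(R_i)$ of a cover is again a strong generator of $\operatorname{Perf}(R_i)$, so no induction is needed for that direction. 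The classical-generator statement is obtained verbatim, allowing $n=\infty$ in the $\langle\,\cdot\,\rangle_n$'s.

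It remains to recognize, for an affine scheme $\operatorname{Spec}(R)$, when $\operatorname{Perf}(R)$ admits a strong generator; since $\operatorname{Perf}(R)=\langle R\rangle$, this asks for an integer $N$ with $\operatorname{Perf}(R)=\langle R\rangle_{N+1}$. For $R$ coherent I would invoke \cite[Corollary 7]{Stevenson:2025}, which identifies this with $R$ having finite weak global dimension; for $R$ Noetherian one may use \cite[Theorem 0.5]{Neeman:2021b} directly, or deduce it from the coherent case using that over a Noetherian ring the flat and projective dimensions of finitely generated modules agree, so weak global dimension coincides with global dimension. This affine equivalence is the only genuinely non-formal ingredient—everything above it is the Mayer--Vietoris formalism of \Cref{thm:generators_presheaf} together with the routine verification of its hypotheses for $D_{\operatorname{qc}}$ and the bookkeeping of quasi-compactness and separatedness—so I expect the main obstacle to lie entirely in that homological-algebra input, which I would treat as a citation rather than reprove.
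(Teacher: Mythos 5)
Your proposal is correct and follows essentially the same route as the paper: the first equivalence is the formal application of \Cref{thm:generators_presheaf}\eqref{item:generators_presheaf1} to $\mathcal{T}=D_{\operatorname{qc}}$ on $X_{Zar}$ (with \Cref{ex:presheaves_on_Zariski_site_satisfy_abc,ex:presheaves_on_Zariski_site_satisfies_d,ex:of_boundedness} supplying the hypotheses and an induction over a finite subcover), which the paper dismisses as ``clear'', and the affine characterizations are the same citations (weak global dimension via Stevenson in the coherent case, and the Noetherian case via the coincidence of weak and global dimension or a direct reference). No gaps.
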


\begin{proof}
    The first statement is clear.
    When $R$ is coherent $\operatorname{Perf}(R)$ admitting a strong generator is equivalent to $R$ having finite weak global dimension by \cite[Theorem 6]{Stevenson:2025}.
    Additionally, when $R$ is Noetherian $\operatorname{Perf}(R)$ admitting a strong generator is equivalent to $R$ having finite global dimension by \cite[Corollary 4.3.13]{Letz:2020} or \cite[Proposition 10]{Krause:2024}.
    Alternatively, it is well-known that weak global dimension equals global dimension for Noetherian rings.
\end{proof}

\begin{remark}
    Using \Cref{cor:Neeman2021} and (proof of) \Cref{prop:smooth_compact_bounded} one can show for any smooth covering $\{X_i\to X\}$ of a quasi-compact separated scheme $X$, $\operatorname{Perf}(X)$ admits a strong generator if and only if every $\operatorname{Perf}(X_i)$ does.
    Of course, as being regular (:=locally Noetherian with regular stalks) is smooth local, one does not gain a lot.
\end{remark}

Lastly, it is worthwhile to note that \Cref{thm:generators_presheaf} also applies in a mild noncommutative set-up and recovers \cite[Theorem B]{DeDeyn/Lank/ManaliRahul:2024a} and \cite[Proposition 5.6]{DeDeyn/Lank/ManaliRahul:2024b}.

\section{Variations on a theme}
\label{sec:variations}

\subsection{Big versions}
\label{subsec:big}

Recall that a strong $\oplus$-generator (or a `big generator') in a triangulated category $\mathsf{T}$ admitting small coproducts is an object $G\in\mathsf{T}$ satisfying $\mathsf{T}=\overline{\langle G \rangle}_n$ for some $n\geq 0$. 
There are straightforward generalizations of \Cref{thm:generators_presheaf} \eqref{item:generators_presheaf1} and \eqref{item:generators_presheaf2} to the big setting, where one changes `$\mathcal{S}(X)$ admits a strong generator' to `$\mathcal{T}(X)$ admits strong $\oplus$-generator from $\mathcal{S}(X)$', etc.
Moreover, the added flexibility of working on the big level allows one to remove some hypotheses in the statement.

\begin{theorem}\label{thm:big_generators_presheaf}
    Let $\mathcal{T}$ be $\mathcal{C}$-presheaf of triangulated categories, 
    \begin{displaymath}
        \begin{tikzcd}
            {W} & V \\
            U & X
            \arrow[from=1-1, to=1-2]
            \arrow[from=1-1, to=2-1]
            \arrow["j", from=1-2, to=2-2]
            \arrow["i"', from=2-1, to=2-2]
            \arrow["k"{description}, from=1-1, to=2-2]
        \end{tikzcd}
    \end{displaymath}
    be a Zariski square in $\mathcal{C}$ and suppose either
    \begin{enumerate}
        \item\label{item:big_generators_presheaf1} \Cref{hyp:1} holds, $\mathcal{S}=\mathcal{T}^c$ and $i_\ast$, $j_\ast$ and $k_\ast$ are compactly bounded,
        \item\label{item:big_generators_presheaf2} \Cref{hyp:2} holds, $\mathcal{S}=\mathcal{T}^b_c$, $i_\ast$, $j_\ast$ and $k_\ast$ are coherently bounded,
    \end{enumerate}
    Then $\mathcal{T}(U)$ and $\mathcal{T}(V)$ admit strong $\oplus$-generators from respectively $\mathcal{S}(U)$ and $\mathcal{S}(V)$ if and only if $\mathcal{T}(X)$ admits a strong $\oplus$-generator from $\mathcal{S}(X)$.
\end{theorem}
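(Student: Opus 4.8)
The plan is to rerun the Mayer--Vietoris argument of \Cref{thm:generators_presheaf}, but carried out entirely at the ``big'' level inside the coproduct closures $\overline{\langle-\rangle}_n$. Two things then simplify: one never has to descend from $\overline{\langle-\rangle}$ to $\langle-\rangle$, so the Verdier-quotient lemmas and the passage recorded after \Cref{thm:generators_presheaf} are not needed (and neither is the closure of $\mathcal{T}^{\ge 0}(X)$ under coproducts); and the converse direction becomes essentially formal, which is why the hypothesis on $W$ and \Cref{hyp:4} drop out of the statement.

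\emph{Forward direction.} Assume $\mathcal{T}(U)=\overline{\langle G_U\rangle}_{n_U}$ and $\mathcal{T}(V)=\overline{\langle G_V\rangle}_{n_V}$ with $G_U\in\mathcal{S}(U)$, $G_V\in\mathcal{S}(V)$. First I would note that the edge $W\to U$ of the Zariski square, being a pullback of the $\mathcal{T}$-flat monomorphism $j$, is a $\mathcal{T}$-preflat monomorphism, so by \Cref{lem:preflatmono_are_loc} its pullback is a localization, in particular essentially surjective; as this pullback also commutes with coproducts (\Cref{hyp:1}) and preserves compacts (resp.\ is $t$-exact, with $\mathcal{T}^b_c$ a subpresheaf), the object $G_W:=(W\to U)^\ast G_U$ lies in $\mathcal{S}(W)$ and satisfies $\mathcal{T}(W)=\overline{\langle G_W\rangle}_{n_U}$. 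Next, fixing an arbitrary $N\in\mathcal{T}(X)$, I would feed it into the Mayer--Vietoris triangle of \Cref{lem:MVtriangle},
\[
 N \to i_\ast i^\ast N \oplus j_\ast j^\ast N \to k_\ast k^\ast N \to N[1].
\]
Since the pushforwards are triangulated and preserve coproducts by \Cref{hyp:1}, one has $i_\ast\bigl(\overline{\langle G_U\rangle}_{n_U}\bigr)\subseteq\overline{\langle i_\ast G_U\rangle}_{n_U}$, and likewise for $j$ and $k$. Now the relevant boundedness hypothesis---compact boundedness of $i_\ast,j_\ast,k_\ast$ in case \eqref{item:big_generators_presheaf1}, coherent boundedness in case \eqref{item:big_generators_presheaf2}---applies to $G_U\in\mathcal{S}(U)$ and produces $H_i\in\mathcal{S}(X)$ together with an integer $m_i$ \emph{independent of $N$} with $i_\ast G_U\in\overline{\langle H_i\rangle}_{m_i}$, and similarly $H_j,m_j$ and $H_k,m_k$. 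Taking $G:=H_i\oplus H_j\oplus H_k\in\mathcal{S}(X)$ (using that $\mathcal{S}(X)$ is closed under finite sums), each of the two right-hand terms of the triangle lies in $\overline{\langle G\rangle}_{M}$ for $M:=\max\{n_U m_i,\,n_V m_j,\,n_U m_k\}$, whence $N\in\overline{\langle G\rangle}_{2M}$ by additivity of levels along a triangle. As $N$ was arbitrary and $2M$ does not depend on it, $G$ is a strong $\oplus$-generator of $\mathcal{T}(X)$ from $\mathcal{S}(X)$.

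\emph{Converse direction.} Since $i\colon U\to X$ is a $\mathcal{T}$-flat monomorphism, \Cref{lem:preflatmono_are_loc} shows $i^\ast$ is a localization, hence essentially surjective, and by \Cref{hyp:1} it commutes with coproducts. So if $\mathcal{T}(X)=\overline{\langle G\rangle}_n$ with $G\in\mathcal{S}(X)$, every object of $\mathcal{T}(U)$ is isomorphic to some $i^\ast N$ with $N\in\overline{\langle G\rangle}_n$, and therefore $\mathcal{T}(U)=\overline{\langle i^\ast G\rangle}_n$ with $i^\ast G\in\mathcal{S}(U)$; symmetrically for $V$.

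\emph{Main obstacle.} There is no genuinely hard step---the argument is a bookkeeping of generation levels---so the proof proposal is mostly a matter of checking that the ingredients still line up. The two points that require care are: that the boundedness hypotheses are applied to the \emph{big} local generators $G_U,G_V$, which is legitimate precisely because a strong $\oplus$-generator ``from $\mathcal{S}$'' is by definition an object of $\mathcal{S}$, the domain on which compact/coherent boundedness is asserted; and that the pushforwards commute with coproducts, so a bound $\overline{\langle G_U\rangle}_{n_U}$ is transported to $\overline{\langle i_\ast G_U\rangle}_{n_U}$ with no loss in level---this is the content of \Cref{hyp:1} and is exactly what lets the argument run at the big level with fewer hypotheses than in \Cref{thm:generators_presheaf}.
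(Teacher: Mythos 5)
Your proposal is correct and follows essentially the same route as the paper, whose proof of \Cref{thm:big_generators_presheaf} consists precisely of the instruction to rerun the Mayer--Vietoris argument of \Cref{thm:generators_presheaf} at the level of $\overline{\langle-\rangle}_n$, together with the observation (which you also make, via $G_W=(W\to U)^\ast G_U$ and \Cref{lem:preflatmono_are_loc}) that essential surjectivity of the pullbacks renders both the condition on $W$ and \Cref{hyp:4} unnecessary. Your accounting of why the Verdier-quotient lemmas and the closure of $\mathcal{T}^{\geq 0}(X)$ under coproducts drop out is exactly the intended simplification.
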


\begin{proof}
    Copy and suitably adjust the proof of \Cref{thm:generators_presheaf}.
    Note that the localization of \Cref{lem:preflatmono_are_loc} (really just the essential surjectivity of the pullback) gives that, e.g., $\mathcal{T}(U)$ admitting a strong $\oplus$-generator from $\mathcal{S}(U)$ implies that $\mathcal{T}(W)$ admits a strong $\oplus$-generator from $\mathcal{S}(W)$ (hence \Cref{hyp:4} is unnecessary).
\end{proof}

In particular, this recovers \cite[Theorem A]{DeDeyn/Lank/ManaliRahul:2024a} and \cite[Corollary 5.7]{DeDeyn/Lank/ManaliRahul:2024b}.

\subsection{Beyond Zariski}
\label{subsec:topologies}

There are ways of dealing with strong generation for more exotic topologies than the Zariski topology (in the algebro-geometric setting at least where `more exotic topology' makes sense), as was done in \cite{Aoki:2021,DeDeyn/Lank/ManaliRahul:2024b}.
We briefly sketch this approach here, the following is the crucial definition (we state it in the language of tensor triangular geometry, see \cite[Definition 3.18]{Mathew:2016} for the `higher' version).
Moreover, observe that the definition makes sense for any `big' tensor triangulated category, i.e.\ those admitting arbitrary coproducts.
From now on, `tt' stands for either tensor triangulated or tensor triangular.

\begin{definition}[{\cite[Proposition 3.15 \& Definition 3.16]{Balmer:2016}}]\label{def:descendable_object}
    A commutative monoid $A$ in a tt-category $(\mathsf{T},\otimes,\mathbf{1})$ is \textbf{nil-faithful}, or \textbf{descendable}, if the smallest thick tensor-ideal containing $A$, denoted $\operatorname{thick}^\otimes(A)$, equals $\mathsf{T}$.
\end{definition}

The article \cite{Balmer:2016} works in the `small' setting (so with $\mathcal{T}$ essentially small), with \emph{separable} monoids and a slightly stronger notion of triangulated category; however, these are not needed for Proposition 3.15 in loc.\ cit.
(A reason for the latter two assumptions in loc.\ cit.\ is to make sense of the \emph{triangulated} category of $A$-modules, which we do not need). 
For completeness we give those characterizations from Proposition 3.15 in loc.\ cit.\ that we require.

\begin{lemma}\label{lem:descendable_object}
    Let $A$ and $\mathsf{T}$ be as in \Cref{def:descendable_object}.
    The following are equivalent
    \begin{enumerate}
        \item\label{item:des1} $A$ is descendable, i.e.\ $\operatorname{thick}^\otimes(A)=\mathsf{T}$,
        \item\label{item:des2} in some (and hence in every) distinguished triangle 
        \begin{displaymath}
            J\xrightarrow{\quad \xi\quad} \mathbf{1} \xrightarrow{\quad \eta\quad} A \xrightarrow{\quad \zeta \quad} J[1],
        \end{displaymath}
        where $\eta$ is the unit of the monoid $A$, the morphism $\xi$ is $\otimes$-nilpotent, i.e.\ there exists some $n\geq 0$ with $\xi^{\otimes n}=0$,
        \item\label{item:des3} if $f$ is a morphism in $\mathsf{T}$ and $A\otimes f=0$, then $f$ is $\otimes$-nilpotent.
    \end{enumerate}
\end{lemma}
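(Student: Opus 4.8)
The plan is to prove the cycle of implications $\eqref{item:des1}\Rightarrow\eqref{item:des2}\Rightarrow\eqref{item:des3}\Rightarrow\eqref{item:des1}$, which is the standard shape for such equivalences and the one used in \cite{Balmer:2016}. Throughout I would work with the fixed distinguished triangle $J\xrightarrow{\xi}\mathbf{1}\xrightarrow{\eta}A\xrightarrow{\zeta}J[1]$, and use repeatedly that tensoring with an object (or a morphism) is exact and that $\otimes$-nilpotence of $\xi$ is independent of the chosen triangle (any two such triangles are isomorphic, compatibly with $\eta$, so the ideals generated by the respective $J$'s agree and the nilpotence orders match).

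For $\eqref{item:des1}\Rightarrow\eqref{item:des2}$, the key observation is that the morphism $\xi^{\otimes n}\colon J^{\otimes n}\to\mathbf{1}$ can be analyzed via the octahedral axiom: its cone is built from $A$-modules, or more precisely one shows by induction on $n$ that $\operatorname{cone}(\xi^{\otimes n})\in\langle A\rangle_n$ as a thick $\otimes$-ideal, i.e.\ in $\operatorname{thick}^\otimes(A)_n$. Indeed, $\operatorname{cone}(\xi)=A$ lies in $\operatorname{thick}^\otimes(A)_1$, and the filtration of $\xi^{\otimes(n+1)}$ as a composite $J^{\otimes(n+1)}\xrightarrow{\xi\otimes\mathbf{1}^{\otimes n}}J^{\otimes n}\xrightarrow{\xi^{\otimes n}}\mathbf{1}$ together with the octahedron on this composite shows $\operatorname{cone}(\xi^{\otimes(n+1)})$ sits in a triangle with outer terms $\operatorname{cone}(\xi)\otimes J^{\otimes n}\in\operatorname{thick}^\otimes(A)_1$ and $\operatorname{cone}(\xi^{\otimes n})\in\operatorname{thick}^\otimes(A)_n$, hence lies in $\operatorname{thick}^\otimes(A)_{n+1}$. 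Now if $\operatorname{thick}^\otimes(A)=\mathsf{T}$ then $\mathbf{1}\in\operatorname{thick}^\otimes(A)_n$ for some $n$; I would then argue that this forces $\xi^{\otimes n}$ (or $\xi^{\otimes(n+1)}$, absorbing a constant) to be zero, using the elementary fact that if $\mathbf{1}\in\operatorname{thick}^\otimes(A)_n$ then the identity of $\mathbf{1}$, and hence any morphism out of a tensor power of $J$ that dies after $\otimes A$ applied $n$ times, is killed — concretely, $\xi$ becomes zero after $\otimes A$ (since $A\otimes\xi$ is split by $A\otimes\eta$, as $\eta$ is the unit), so $\xi^{\otimes n}$ is zero whenever $\mathbf{1}$ is $n$-fold built from $A$-modules. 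This last bookkeeping step, getting the precise nilpotence exponent to match the building length, is the one place requiring care.

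For $\eqref{item:des2}\Rightarrow\eqref{item:des3}$: suppose $\xi^{\otimes n}=0$ and let $f\colon x\to y$ satisfy $A\otimes f=0$. From the triangle, $A\otimes f=0$ means $f$ factors through $J[1]\otimes x$ via $\zeta\otimes\mathbf{1}_x$; equivalently there is $g$ with $f=(\xi[1]\otimes\mathbf{1}_y)\circ(\text{something})$ — more precisely, $A\otimes f = 0$ forces $f$ to factor as $x\xrightarrow{\zeta\otimes\mathbf{1}_x} J[1]\otimes x \xrightarrow{h} y$ for some $h$. Iterating, $f^{\otimes n}$ factors through $(\zeta^{\otimes n})[\,n\,]\otimes(\cdots)$, and since $\zeta^{\otimes n}$ sits in a triangle involving $\xi^{\otimes n}[\,?\,]$ one extracts that $f^{\otimes n}$ factors through a map built from $\xi^{\otimes n}=0$, hence $f^{\otimes n}=0$. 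I would set this up by the clean lemma that $A\otimes f=0$ implies $f$ is a retract-up-to-shift of $\xi\otimes(-)$-type maps, then multiply $n$ copies.

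For $\eqref{item:des3}\Rightarrow\eqref{item:des1}$: apply $\eqref{item:des3}$ to $f=\xi$ itself. Since $A\otimes\xi=0$ (because $A\otimes\eta$ is a split monomorphism with the multiplication as retraction, so in the triangle $A\otimes J\xrightarrow{A\otimes\xi}A\to A\otimes A$ the first map is zero), $\xi$ is $\otimes$-nilpotent, say $\xi^{\otimes n}=0$. Then by the induction from the first part, $\operatorname{cone}(\xi^{\otimes n})\in\operatorname{thick}^\otimes(A)$; but $\xi^{\otimes n}=0$ means $\operatorname{cone}(\xi^{\otimes n})\cong J^{\otimes n}[1]\oplus\mathbf{1}$, so in particular $\mathbf{1}\in\operatorname{thick}^\otimes(A)$, whence $\operatorname{thick}^\otimes(A)=\mathsf{T}$ since it is a thick $\otimes$-ideal containing the unit. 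The main obstacle is really just the careful octahedral/induction argument of the first implication — tracking how the thick-ideal building length of $\operatorname{cone}(\xi^{\otimes n})$ grows and translating a statement about $\mathbf{1}$ being built from $A$ back into vanishing of a power of $\xi$; everything else is formal manipulation of the defining triangle and the split (co)unit.
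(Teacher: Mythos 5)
Your proposal is correct in outline and is in substance the proof of \cite[Proposition 3.15]{Balmer:2016} that the paper's own proof simply cites (observing only that essential smallness, separability of $A$ and the stronger triangulated structure are not needed), so you are reconstructing exactly the intended argument: the octahedral induction giving $\operatorname{cone}(\xi^{\otimes n})\in\operatorname{thick}^\otimes(A)$, the splitting $A\otimes\xi=0$ coming from the unit, and the ghost-type bookkeeping relating the building length of $\mathbf{1}$ to the nilpotence exponent of $\xi$. The one slip is in (2)$\Rightarrow$(3): since $(\eta\otimes 1_y)\circ f=(A\otimes f)\circ(\eta\otimes 1_x)=0$ by naturality, the morphism $f$ factors through $\xi\otimes 1_y\colon J\otimes y\to y$ (not out of $x$ into $J[1]\otimes x$ as written, which does not typecheck), and then $f^{\otimes n}$ factors through $\xi^{\otimes n}\otimes 1_{y^{\otimes n}}=0$ exactly as you intend.
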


\begin{proof}
    The required parts of \cite[Proposition 3.15]{Balmer:2016} work verbatim (and importantly, essential smallness, separability of $A$ or the stronger notion of triangulated are not needed). 
\end{proof}

The main algebro-geometric example is the following.

\begin{example}\label{ex:hcover}
    Recall that a morphism $f\colon Y\to X$ between Noetherian schemes is called an \textbf{h cover} if it is of finite type and every base change is submersive, i.e.\ surjective and $U \subseteq X$ is open or closed if and only if $f^{-1}(U )\subseteq Y$ is so. 
    In particular fppf, and so in particular smooth and \'etale, covers are h covers.
    By \cite[Proposition 11.25]{Bhatt/Scholze:2017}, for such a morphism $f$, the direct image $\mathbb{R}f_\ast \mathcal{O}_Y$ is descendable when viewed as a commutative monoid in $D_{\operatorname{qc}}(X)$.
\end{example}

The way one can leverage these descendable objects is through the following two results, originally due to Aoki \cite[Proposition 4.4 and Corollary 4.5]{Aoki:2021}.
We simply sketch the arguments, in tt-language, for convenience of the reader.
A `relative' version, for a tt-category acting on some other triangulated category, was used in \cite{DeDeyn/Lank/ManaliRahul:2024b}.

\begin{proposition}\label{prop:aoki_descent}
    Let $A$ and $\mathsf{T}$ be as in \Cref{def:descendable_object}.
    Suppose $\mathsf{S}\subseteq\mathsf{T}$ is a thick triangulated subcategory closed under the action of $A$.
    Then there is an integer $n\geq 0$ such that
    \begin{displaymath}
        \mathsf{S}= \langle\{ A \otimes s\mid s\in \mathsf{S} \} \rangle_n.
    \end{displaymath}
\end{proposition}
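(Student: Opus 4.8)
The plan is to use the characterization of descendability from \Cref{lem:descendable_object} \eqref{item:des2}, namely that in a distinguished triangle $J\xrightarrow{\xi}\mathbf{1}\xrightarrow{\eta}A\xrightarrow{\zeta}J[1]$ the morphism $\xi$ is $\otimes$-nilpotent, say $\xi^{\otimes n}=0$. The guiding idea is that the cofibre sequence built by smashing together $n$ copies of this triangle lets one write $\mathbf{1}$ as a finite iterated extension of $A$-modules, with the number of extension steps controlled by $n$, and then apply this to an arbitrary $s\in\mathsf{S}$ by tensoring.

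First I would set up the ``Koszul-type'' filtration. Tensoring the triangle $J\to\mathbf{1}\to A$ with itself $n$ times and using octahedral axioms, one obtains a finite tower $0=X_0\to X_1\to\cdots\to X_n=\mathbf{1}$ (or a dual tower) whose successive cones are of the form $A\otimes J^{\otimes i}$ for $0\le i<n$ — the point being that the composite $J^{\otimes n}\to\mathbf{1}$ is $\xi^{\otimes n}=0$, so the tower actually terminates at $\mathbf{1}$ rather than at $J^{\otimes n}$. This is exactly the standard argument that $\mathbf{1}\in\operatorname{thick}^\otimes(A)$ made quantitative: it shows $\mathbf{1}\in\langle A\otimes J^{\otimes 0},\ldots,A\otimes J^{\otimes(n-1)}\rangle$ with at most $n$ cones, i.e. $\mathbf{1}\in\langle\{A\otimes t\mid t\in\mathsf{T}\}\rangle_n$. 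I would be a little careful about whether the natural bound is $n$ or $n+1$ depending on indexing conventions for $\langle-\rangle_n$; since the statement only asserts existence of \emph{some} integer, this is harmless, but I would state the tower explicitly to pin it down.

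Next, given any $s\in\mathsf{S}$, tensor the whole tower with $s$: this is an exact functor, so it preserves distinguished triangles and the ``number of cones'' bound. Thus $s=\mathbf{1}\otimes s$ lies in $\langle\{A\otimes t\otimes s\mid t\in\mathsf{T}\}\rangle_n$. Now I invoke the hypothesis that $\mathsf{S}$ is thick and closed under the action of $A$: for any $t\in\mathsf{T}$, the object $A\otimes t\otimes s = A\otimes(t\otimes s)$ need not have $t\otimes s\in\mathsf{S}$, but I can instead rewrite things using that the building objects appearing are $A\otimes J^{\otimes i}\otimes s$, and $J^{\otimes i}\otimes s$ — here is the subtlety — is not obviously in $\mathsf{S}$ either. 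The clean fix: observe $A\otimes J^{\otimes i}\otimes s \in \langle A\otimes s'\mid s'\in\mathsf{S}\rangle_1$ is what we want, and indeed $A\otimes(J^{\otimes i}\otimes s)$ — we want to land in $A\otimes\mathsf{S}$, so we need $J^{\otimes i}\otimes s\in\mathsf{S}$. Since $\mathsf{S}$ is a \emph{thick tensor-ideal}-like object only under the action of $A$, not of all of $\mathsf{T}$, the correct move is to note that each $A\otimes J^{\otimes i}$ is itself in $\operatorname{thick}^\otimes(A)=\mathsf{T}$ trivially, but more usefully: run the filtration argument so that the successive cones are $A\otimes(\text{something})\otimes s$ where that something can be absorbed — concretely, use that $A\otimes J^{\otimes i}\otimes s$ is a summand-free iterated cone of copies of $A\otimes s$ shifted, because $J$ sits in a triangle with $\mathbf{1}$ and $A$ and one can re-expand. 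I expect \textbf{this bookkeeping — ensuring every building block is literally of the form $A\otimes s'$ with $s'\in\mathsf{S}$, and that the cone-count $n$ is uniform in $s$ — to be the main obstacle}, and I would resolve it by proving the sharper statement that $\mathbf{1}\in\langle A,A\otimes J,\ldots\rangle$ and then tensoring with $s$ and repeatedly using the triangle defining $J$ to replace each $A\otimes J^{\otimes i}\otimes s$ by an iterated extension of shifts of $A\otimes s$ (legitimate since $A\otimes s\in A\otimes\mathsf{S}$ and $\mathsf{S}$ closed under the $A$-action and shifts), at the cost of enlarging $n$ to some $n'$ depending only on $n$. The uniformity of $n'$ is immediate because the whole construction of the tower is independent of $s$; only the final tensoring depends on $s$.
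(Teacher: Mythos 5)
Your proposal is correct and follows essentially the same route as the paper's (sketched) proof: both run the filtration coming from $\xi^{\otimes n}=0$, realizing $\operatorname{cone}(\xi^{\otimes n})\otimes s=s\oplus(J^{\otimes n}\otimes s)[1]$ as an $n$-step extension of the objects $A\otimes J^{\otimes i}\otimes s$ and then using thickness to extract $s$. The subtlety you flag is real and is glossed over in the paper's sketch: $A\otimes J^{\otimes i}\otimes s=A\otimes(J^{\otimes i}\otimes s)$ is not literally of the form $A\otimes s'$ with $s'\in\mathsf{S}$, since closure of $\mathsf{S}$ under the $A$-action does not give $J^{\otimes i}\otimes s\in\mathsf{S}$; your fix---re-expanding each $J$ via the triangle $J\to\mathbf{1}\to A$ tensored with $A\otimes J^{\otimes(i-1)}\otimes s$, so that the building blocks become objects $A^{\otimes k}\otimes s=A\otimes(A^{\otimes(k-1)}\otimes s)\in A\otimes\mathsf{S}$---is valid and yields a uniform (if exponentially worse, roughly $2^{n}-1$) bound, which suffices since the statement only asserts existence of some integer.
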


\begin{proof}[Sketch of Proof]
    Let $S:=\{ A \otimes s\mid s\in \mathsf{S} \}$, one shows by induction, for any $s\in \mathsf{S}$ and integer $i$, that $\operatorname{cone}(\xi^{\otimes i})\otimes s \in \langle S\rangle_i$ (where $\xi$ is defined as in \Cref{lem:descendable_object}).
    Hence, if $n$ is the smallest integer for which $\xi^{\otimes n}=0$, we see that $\operatorname{cone}(\xi^{\otimes n})=\operatorname{cone}(0)=\mathbf{1}\oplus J^{\otimes n}$.
    Consequently, $s\in \langle S\rangle_n$.
\end{proof}

\begin{corollary}
    Let $f^\ast\colon\mathsf{S}\to\mathsf{T}$ be a strongly monoidal functor between tt-categories admitting a right adjoint $f_\ast$ with $f^\ast\dashv f_\ast$ satisfying the projection formula, e.g.\ $\mathsf{S}$ and $\mathsf{T}$ are both rigidly-compactly generated tt-categories.
    Suppose $\mathsf{S}=\overline{\langle S\rangle}_r$, for some integer $r$ and subset $S\subseteq \mathsf{S}$, and $f_\ast(\mathbf{1}_{\mathsf{S}})$ is descendable.
    Then $\mathsf{T}=\overline{\langle f_\ast S\rangle}_m$ for some integer $m$.
\end{corollary}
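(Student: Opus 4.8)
The plan is to combine the descent result \Cref{prop:aoki_descent} with the projection formula to transport a big generator from $\mathsf{S}$ to $\mathsf{T}$. First I would set $A := f_\ast(\mathbf{1}_{\mathsf{S}})$, which by hypothesis is a descendable commutative monoid in $\mathsf{T}$. Note that since $f^\ast$ is strongly monoidal, $\mathsf{T}$ is naturally a module over (the image of) $\mathsf{S}$, but more to the point, $\mathsf{T}$ is a tt-category in its own right and $A$ sits inside it; the thick triangulated subcategory $\mathsf{T}$ itself is trivially closed under the action of $A$, so \Cref{prop:aoki_descent} yields an integer $n\geq 0$ with $\mathsf{T} = \langle\{A\otimes t \mid t\in\mathsf{T}\}\rangle_n$. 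Actually one wants the \emph{big} version of this statement (closure under coproducts), but since the proof of \Cref{prop:aoki_descent} only uses cones of $\xi^{\otimes i}$ and the fact that $\operatorname{cone}(\xi^{\otimes n})$ splits off $\mathbf 1$, the same argument gives $\mathsf{T} = \overline{\langle\{A\otimes t\mid t\in\mathsf{T}\}\rangle}_n$ for suitable $n$; I would either invoke this directly or note it follows by the same induction.

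Next I would use the projection formula to rewrite $A\otimes t = f_\ast(\mathbf 1_{\mathsf S})\otimes t \cong f_\ast(f^\ast t)$ for every $t\in\mathsf{T}$. Hence $\mathsf{T} = \overline{\langle\{f_\ast f^\ast t \mid t\in\mathsf{T}\}\rangle}_n = \overline{\langle f_\ast(\mathsf{S}')\rangle}_n$ where $\mathsf{S}'$ denotes the image of $f^\ast$ (or rather the objects of the form $f^\ast t$). Now I bring in the hypothesis $\mathsf{S} = \overline{\langle S\rangle}_r$: since $f^\ast$ is a triangulated functor commuting with coproducts (it has a right adjoint, hence preserves coproducts), it maps $\overline{\langle S\rangle}_r$ into $\overline{\langle f^\ast S\rangle}_r$. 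Thus every object $f^\ast t$ lies in $\overline{\langle f^\ast S\rangle}_r$, and applying the triangulated, coproduct-preserving functor $f_\ast$ gives $f_\ast f^\ast t \in \overline{\langle f_\ast f^\ast S\rangle}_r$. (Here $f_\ast$ preserves coproducts because $f^\ast$ is strongly monoidal between rigidly-compactly generated categories, so $f_\ast$ is itself exact and coproduct-preserving; in the general stated hypotheses one should add or already have this, or argue via the projection formula that $f_\ast f^\ast$ commutes with coproducts.) Note $f_\ast f^\ast S = A\otimes S$, and by the projection formula again each $A\otimes s$ with $s\in S$ lies in $\overline{\langle f_\ast s\rangle}_1$ if we simply observe $A\otimes s = f_\ast f^\ast s$; but what we actually want is a bound in terms of $f_\ast S$. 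So instead I would run the comparison the other way: write $f_\ast f^\ast s$ and relate it to $f_\ast s$.

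Let me reorganize the final step, as this is the main obstacle. The cleanest route: from $\mathsf{T} = \overline{\langle\{f_\ast f^\ast t\}\rangle}_n$ and $f^\ast t \in \overline{\langle f^\ast S\rangle}_r$, I get $f_\ast f^\ast t \in f_\ast\big(\overline{\langle f^\ast S\rangle}_r\big) \subseteq \overline{\langle f_\ast f^\ast S\rangle}_r$, whence $\mathsf{T} = \overline{\langle f_\ast f^\ast S\rangle}_{nr}$ by the standard "building in $n$ steps out of things built in $r$ steps" estimate $\overline{\langle\overline{\langle Y\rangle}_r\rangle}_n \subseteq \overline{\langle Y\rangle}_{nr}$. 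Finally, by the projection formula, $f_\ast f^\ast s \cong f_\ast(\mathbf 1_{\mathsf S}) \otimes s$... no — $f_\ast f^\ast s \cong f_\ast(f^\ast\mathbf 1_{\mathsf S}\otimes f^\ast s)$? That is not quite $A\otimes$ something in $\mathsf T$. The correct identity is: for $s \in \mathsf S$, projection formula gives $f_\ast f^\ast s$ is a module-theoretic construction; what is genuinely true and sufficient is $f^\ast s \in \overline{\langle f^\ast S\rangle}_1$ trivially, so the only object in $\mathsf{T}$ we need to "build from $f_\ast S$" is of the form $f_\ast f^\ast s$, and $f_\ast f^\ast s = A \otimes s'$ only makes sense if $s$ lives in $\mathsf T$. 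The resolution is that $S \subseteq \mathsf S$, and $f^\ast$ applied to $S$ lands in $\mathsf T$, and we should take as our generating set $f_\ast S := \{f_\ast s : s \in S\}$ — here $f_\ast s \in \mathsf T$ directly. Then $f_\ast f^\ast(f_\ast s) \cong (f_\ast\mathbf 1)\otimes(f_\ast s) = A \otimes f_\ast s$ by projection formula, so the set $\{A \otimes t : t \in \mathsf T\}$ restricted to $t = f_\ast s$ recovers $A \otimes f_\ast s$. Combining: $\mathsf T = \overline{\langle\{A\otimes t\}_{t\in\mathsf T}\rangle}_n$ and each $t\in\mathsf T$ satisfies $t = f_\ast f^\ast$(something)$\in \overline{\langle f_\ast f^\ast(\overline{\langle S\rangle}_r)\rangle}_n \subseteq \overline{\langle\{f_\ast f^\ast s\}_{s\in S}\rangle}_{nr} = \overline{\langle\{A\otimes f_\ast s\}_{s\in S}\rangle}_{nr} \subseteq \overline{\langle\{f_\ast s\}_{s\in S}\rangle}_{nr}$, the last inclusion because $A\otimes(-)$ preserves the relevant building (tensoring with a fixed object is triangulated and coproduct-preserving, so $A\otimes f_\ast s \in \overline{\langle f_\ast s\rangle}_1$ requires $A\otimes f_\ast s$ to be built from $f_\ast s$ — which needs $A$ to be "small", i.e. one actually needs $A\otimes X \in \overline{\langle X \otimes \text{(generators of }\mathsf T)\rangle}$, not from $X$ alone). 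The honest fix is to note $A$ is descendable hence $\mathbf 1_{\mathsf T} \in \operatorname{thick}^\otimes(A) = \overline{\langle A\rangle}^{\otimes\text{-ideal}}$, so $X = \mathbf 1\otimes X$ is built from $A\otimes X$, giving $\mathsf{T} = \overline{\langle\{A\otimes f_\ast s : s\in S\}\rangle}_m = \overline{\langle\{f_\ast s : s\in S\}\rangle}_m$ with $m = nr$ (up to a harmless constant), since each $A\otimes f_\ast s \in \overline{\langle f_\ast s\rangle}_{?}$ fails but each $f_\ast s \in \overline{\langle A\otimes f_\ast s\rangle}_n$ holds by descendability applied with the object $f_\ast s$. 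Thus set $m := n\cdot r\cdot n$ or simply "some integer $m$" and conclude $\mathsf T = \overline{\langle f_\ast S\rangle}_m$. The main obstacle, then, is bookkeeping the constants correctly and making sure every functor in sight ($f^\ast$, $f_\ast$, $A\otimes(-)$) is triangulated and preserves coproducts so that the $\overline{\langle-\rangle}_n$ estimates compose; I expect this to be routine once one commits to tracking it via the inequality $\overline{\langle\overline{\langle Y\rangle}_a\rangle}_b \subseteq \overline{\langle Y\rangle}_{ab}$ and the descendability of $A$ at each individual object.
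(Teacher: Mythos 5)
Your opening moves are exactly right and coincide with the paper's: set $A=f_\ast\mathbf{1}_{\mathsf{S}}$, apply \Cref{prop:aoki_descent} to $\mathsf{T}$ itself to get $\mathsf{T}=\langle\{A\otimes t\mid t\in\mathsf{T}\}\rangle_n$, and use the projection formula to rewrite $A\otimes t\cong f_\ast f^\ast t$. (The small version of the proposition already suffices, since $\langle-\rangle_n\subseteq\overline{\langle-\rangle}_n$; no ``big'' variant is needed.) The gap is in the final step, and it is created by applying $f^\ast$ to the generating set $S$. Mind the directions: for the statement to typecheck one needs $f_\ast\colon\mathsf{S}\to\mathsf{T}$ and $f^\ast\colon\mathsf{T}\to\mathsf{S}$ (the corollary's first line has this garbled, which likely triggered your confusion). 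Then for $t\in\mathsf{T}$ the object $f^\ast t$ already lives in $\mathsf{S}=\overline{\langle S\rangle}_r$, so applying the exact, coproduct-preserving $f_\ast$ gives $f_\ast f^\ast t\in\overline{\langle f_\ast S\rangle}_r$ directly, and $\mathsf{T}\subseteq\overline{\langle f_\ast S\rangle}_{nr}$ follows from $\overline{\langle\overline{\langle Y\rangle}_r\rangle}_n\subseteq\overline{\langle Y\rangle}_{nr}$. That is the whole proof.

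Instead you route through $\overline{\langle f_\ast f^\ast S\rangle}_r$ and are then forced to compare $f_\ast f^\ast s$ with $f_\ast s$ --- a problem that does not exist and that your subsequent attempts do not resolve. In particular, the two patches you propose are both false in general: $A\otimes f_\ast s\notin\overline{\langle f_\ast s\rangle}_1$ (tensoring with a fixed non-invertible object does not preserve building from a single object), and ``$f_\ast s\in\overline{\langle A\otimes f_\ast s\rangle}_n$ by descendability'' is not what descendability gives --- the induction in \Cref{prop:aoki_descent} builds $X$ out of the objects $A\otimes J^{\otimes i}\otimes X$, i.e.\ out of the full collection $\{A\otimes W\mid W\in\mathsf{T}\}$, not out of the single object $A\otimes X$. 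So the concluding chain of inclusions, as written, does not close; it is repaired by deleting the detour through $f^\ast S$.
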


\begin{proof}[Sketch of Proof]
    By the projection formula, for $t\in\mathsf{T}$, $f_\ast\mathbf{1}_S\otimes t = f_\ast(\mathbf{1}_S\otimes f^\ast t)=f_\ast f^\ast t$.
    Hence, with $n$ as in \Cref{prop:aoki_descent}, $\mathsf{T}= \langle f_\ast\mathbf{1}_S \otimes \mathsf{T} \rangle_n = \langle f_\ast \mathsf{S} \rangle_n = \overline{\langle S \rangle}_{nr}$.
\end{proof}

\subsection{Re:Stacks}
\label{subsec:stacks}

This section briefly discuss some applications to algebraic stacks. 
Our conventions for algebraic stacks follow those of \cite{stacks-project} unless explicitly mentioned otherwise. 
In particular, representable means `representable by schemes' and we always say `representable by algebraic spaces' if so. 
We use usual letters ($U$, $X$, $Y$, etc.) to refer to schemes and algebraic spaces, and curly ones ($\mathcal U$, $\mathcal X$, $\mathcal Y$, etc.) to refer to algebraic stacks. 
In addition, we refer to \cite{Hall/Rydh:2017} for background on derived categories and functors of algebraic stacks; in particular, we follow them and define, for a morphism of algebraic stacks $f\colon \mathcal{X}\to \mathcal{Y}$, the derived pushforward $\mathbb{R}f_\ast:=\mathbb{R}(f_{\operatorname{qc}})_\ast\colon D_{\operatorname{qc}}(\mathcal{X}) \to D_{\operatorname{qc}}(\mathcal{Y})$ as the right adjoint to the pullback functor $\mathbb{L}f^\ast:=\mathbb{L}(f_{\operatorname{qc}})^\ast\colon D_{\operatorname{qc}}(\mathcal{Y})\to D_{\operatorname{qc}}(\mathcal{X})$ (which is subtle). 

A first useful fact is the following.

\begin{lemma}\label{lem:stacks_representable}
    Let $f\colon \mathcal{Z} \to \mathcal{Y}$ and $g\colon \mathcal{Y} \to \mathcal{X}$ be morphism of algebraic stacks. 
    If $g\circ f$ is representable by algebraic spaces, then so is $f$.
\end{lemma}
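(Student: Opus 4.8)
The plan is to reduce the statement to a diagonal-morphism criterion. Recall that a morphism $h\colon \mathcal{A}\to \mathcal{B}$ of algebraic stacks is representable by algebraic spaces if and only if its diagonal $\Delta_h\colon \mathcal{A}\to \mathcal{A}\times_{\mathcal{B}}\mathcal{A}$ is a monomorphism (equivalently, the fibre products of $\mathcal{A}$ with schemes/algebraic spaces over $\mathcal{B}$ are algebraic spaces); more usefully here, one can test this after a base change along a morphism from an algebraic space, or characterize it via the relative inertia. So the first step is to pick the right characterization: $f\colon \mathcal{Z}\to\mathcal{Y}$ is representable by algebraic spaces precisely when $\Delta_f$ is a monomorphism of algebraic stacks. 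I would then aim to show $\Delta_f$ is a monomorphism using that $\Delta_{g\circ f}$ is.

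The key steps, in order. First, write down the factorization of diagonals: there is a canonical commutative diagram relating $\Delta_f\colon \mathcal{Z}\to \mathcal{Z}\times_{\mathcal{Y}}\mathcal{Z}$ and $\Delta_{g\circ f}\colon \mathcal{Z}\to \mathcal{Z}\times_{\mathcal{X}}\mathcal{Z}$, namely $\Delta_{g\circ f}$ factors as $\mathcal{Z}\xrightarrow{\Delta_f}\mathcal{Z}\times_{\mathcal{Y}}\mathcal{Z}\xrightarrow{\ \iota\ }\mathcal{Z}\times_{\mathcal{X}}\mathcal{Z}$, where $\iota$ is the natural map induced by $\mathcal{Y}\to\mathcal{X}$. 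Second, invoke the general principle that if a composite $v\circ u$ is a monomorphism then $u$ is a monomorphism (this is formal: monomorphisms in any category are the morphisms $u$ with $u\circ a=u\circ b\Rightarrow a=b$, and this is inherited by the left factor). Third, conclude that $\Delta_f$ is a monomorphism, hence $f$ is representable by algebraic spaces. Alternatively — and this may be cleaner to write in the \texttt{stacks-project} framework the authors are using — one argues directly: to check $f$ is representable by algebraic spaces, take any morphism $T\to \mathcal{Y}$ from a scheme (or algebraic space) $T$ and show $\mathcal{Z}\times_{\mathcal{Y}} T$ is an algebraic space; but $\mathcal{Z}\times_{\mathcal{Y}}T = \mathcal{Z}\times_{\mathcal{X}}(\mathcal{Y}\times_{\mathcal{X}}\cdots)$ — more precisely, composing $T\to\mathcal{Y}\xrightarrow{g}\mathcal{X}$ gives $\mathcal{Z}\times_{\mathcal{Y}}T = (\mathcal{Z}\times_{\mathcal{X}}T)\times_{(\mathcal{Y}\times_{\mathcal{X}}T)} T$, and $\mathcal{Z}\times_{\mathcal{X}}T$ is an algebraic space by hypothesis on $g\circ f$, so $\mathcal{Z}\times_{\mathcal{Y}}T$ is an algebraic space over the algebraic space $\mathcal{Z}\times_{\mathcal{X}}T$, hence itself an algebraic space.

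I expect the main (and really only) obstacle to be bookkeeping: getting the base-change identity $\mathcal{Z}\times_{\mathcal{Y}}T \simeq (\mathcal{Z}\times_{\mathcal{X}}T)\times_{(\mathcal{Y}\times_{\mathcal{X}}T)}T$ written correctly, and making sure one cites the right \texttt{stacks-project} tags — the stability of ``representable by algebraic spaces'' under composition/base change, and the ``cancellation'' fact that in $\mathcal{Z}\xrightarrow{f}\mathcal{Y}\xrightarrow{g}\mathcal{X}$ with $g\circ f$ having the property, $f$ inherits it (this is exactly a 2-out-of-3/cancellation lemma for the property of being representable by algebraic spaces, cf.\ the analogous statement for schematic morphisms). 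There is genuinely no hard mathematics here; the content is entirely the standard formalism, so the proof should be two or three lines citing the appropriate lemmas from \cite{stacks-project}.
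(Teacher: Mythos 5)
Your proposal is correct and in substance coincides with the paper's argument: the paper cites the characterization of morphisms representable by algebraic spaces as those inducing injections on automorphism/Isom sheaves (Tag 04Y5) and then uses that the first factor of an injective composite is injective, which is exactly the left-cancellation-of-monomorphisms principle you apply to the factorization $\Delta_{g\circ f}=\iota\circ\Delta_f$. Your alternative fibre-product route also works (the displayed isomorphism is the standard magic square), but the last step should be justified by observing that the graph $T\to\mathcal{Y}\times_{\mathcal{X}}T$ is a base change of $\Delta_g$ and hence representable by algebraic spaces, rather than by ``an algebraic space over an algebraic space is an algebraic space,'' which is not a valid inference as stated.
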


\begin{proof}
    For example, this follows from \cite[\href{https://stacks.math.columbia.edu/tag/04Y5}{Tag 04Y5}]{stacks-project} using the fact that for two functions $\phi$ and $\theta$, if $\phi\circ \theta$ is injective, then so is $\theta$. 
\end{proof}

Next, recall from \cite[Definition 2.4]{Hall/Rydh:2017} that a \textbf{concentrated morphism} of algebraic stacks if one that is quasi-compact quasi-separated and for which the derived pushforward of any base change (from a quasi-compact quasi-separated algebraic stack) has finite cohomological dimension; examples are morphisms representable by algebraic spaces (see \cite[Lemma 2.5]{Hall/Rydh:2017}). 
Their derived pushforwards (on the derived category of complexes with quasi-coherent cohomology) are especially well-behaved \cite[Theorem 2.6]{Hall/Rydh:2017}.  
In particular, one says a (quasi-compact quasi-separated) algebraic stack $\mathcal{X}$ is \textbf{concentrated} when the structure morphism $\mathcal{X} \to \operatorname{Spec}(\mathbb{Z})$ is such.
It is worthwhile to note that any tame algebraic stack is concentrated \cite[Theorem 2.1]{Hall/Rydh:2015}.

The following shows that, with some finiteness assumptions, the derived pushforwards of morphisms of algebraic stacks with concentrated target are always coherently bounded.

\begin{lemma}\label{lem:coherent_boundedness_stacks}
    Let $f\colon \mathcal{Y} \to \mathcal{X}$ be a morphism of finite type between separated Noetherian algebraic stacks where $\mathcal{Y}$ is concentrated. Then 
    $\mathbb{R}f_\ast\colon D_{\operatorname{qc}}(\mathcal{Y})\to D_{\operatorname{qc}}(\mathcal{X})$ is coherently bounded.
\end{lemma}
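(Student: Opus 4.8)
The statement asserts coherent boundedness of $\mathbb{R}f_\ast\colon D_{\operatorname{qc}}(\mathcal{Y})\to D_{\operatorname{qc}}(\mathcal{X})$ for a finite type morphism of separated Noetherian algebraic stacks with $\mathcal{Y}$ concentrated. The plan is to reduce, via smooth covers of the source and target, to a statement about schemes, where we may invoke Aoki's theorem \cite[Theorem 3.11]{Aoki:2021} (as recalled in \Cref{ex:of_boundedness}) that any finite type morphism of separated Noetherian schemes is coherently bounded. Concretely, first I would choose a smooth presentation $p\colon X\to \mathcal{X}$ with $X$ a separated Noetherian scheme (a smooth surjective affine, or at least separated, morphism from a scheme; this exists since $\mathcal{X}$ is Noetherian, hence quasi-compact, and has separated — indeed affine — diagonal up to refining). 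The key point is that coherent boundedness can be checked after such a faithfully flat base change, analogously to \Cref{prop:comp_bound_local_target}: smooth morphisms are $D_{\operatorname{qc}}$-flat, and the descendability machinery of \Cref{subsec:topologies} (\Cref{prop:aoki_descent} and its corollary, applied to the descendable monoid $\mathbb{R}p_\ast\mathcal{O}_X$ coming from \Cref{ex:hcover}) lets one recover a bound on $\mathbb{R}f_\ast$ from a bound on its pullback.

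The key steps, in order, are as follows. (1) Pick a smooth surjection $p\colon X \to \mathcal{X}$ from a separated Noetherian scheme and form the base change $q\colon Y:=\mathcal{Y}\times_{\mathcal{X}} X \to \mathcal{Y}$; since $\mathcal{Y}$ is concentrated and $Y\to X$ is again of finite type, and since smooth morphisms preserve the relevant finiteness, $Y$ is a concentrated Noetherian algebraic stack (separated, as $\mathcal{Y}$ is and $p$ is separable after refining). (2) Repeat on the source: choose a smooth surjection $r\colon Z\to Y$ with $Z$ a separated Noetherian \emph{scheme}; this is possible because $Y$ is a Noetherian algebraic stack. Let $g\colon Z\to X$ be the composite $Y$-over-$X$ morphism $X\leftarrow Y \leftarrow Z$; it is a finite type morphism of separated Noetherian schemes, hence coherently bounded by \cite[Theorem 3.11]{Aoki:2021}. (3) Now run the descent argument twice. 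Using that $\mathbb{R}p_\ast\mathcal{O}_X$ is a descendable commutative monoid in $D_{\operatorname{qc}}(\mathcal{X})$ (\Cref{ex:hcover}, as $p$ is smooth surjective hence an h cover) together with flat base change $\mathbb{L}p^\ast \mathbb{R}f_\ast \cong \mathbb{R}q_\ast \mathbb{L}(\mathrm{pr})^\ast$ (valid since $p$ is $D_{\operatorname{qc}}$-flat in the sense of \cite{Hall/Rydh:2017}, as all morphisms here are concentrated), \Cref{prop:aoki_descent} produces an integer $N$ with $D^b_{\operatorname{coh}}(\mathcal{X}) = \langle \{\mathbb{R}p_\ast\mathcal{O}_X \otimes s : s\in D^b_{\operatorname{coh}}(\mathcal{X})\}\rangle_N$; combined with the projection formula this reduces bounding $\mathbb{R}f_\ast(G)$, for $G\in D^b_{\operatorname{coh}}(\mathcal{Y})^b_c$, to bounding $\mathbb{R}q_\ast$ of objects in $D^b_{\operatorname{coh}}(Y)^b_c$. (4) Similarly, since $r\colon Z\to Y$ is an h cover, $\mathbb{R}r_\ast\mathcal{O}_Z$ is descendable in $D_{\operatorname{qc}}(Y)$, and every object of $D^b_{\operatorname{coh}}(Y)^b_c$ is finitely built (with a uniform number of cones) out of objects of the form $\mathbb{R}r_\ast H$ with $H\in D^b_{\operatorname{coh}}(Z)^b_c$; so $\mathbb{R}q_\ast \mathbb{R}r_\ast = \mathbb{R}(g\text{-ish})_\ast$ being coherently bounded — which follows from step (2) plus the fact that coherent boundedness is closed under composition and the compatibility $\mathbb{R}q_\ast\mathbb{R}r_\ast \cong \mathbb{R}g_\ast \mathbb{L}s^\ast$ for the relevant scheme map — gives the needed bound. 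Assembling the two reductions, there is an $n$ (independent of $G$) with $\mathbb{R}f_\ast(G)\in\overline{\langle H\rangle}_n$ for some $H\in D^b_{\operatorname{coh}}(\mathcal{X})^b_c$, which is coherent boundedness. (A small point: one must check $D_{\operatorname{qc}}(\mathcal{X})^b_c = D^b_{\operatorname{coh}}(\mathcal{X})$ and similarly for $\mathcal{Y}$, $Y$; for Noetherian concentrated stacks this is standard, cf.\ \cite{Hall/Rydh:2017}.)

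**Main obstacle.** The hard part will be step (3)–(4): making the descent-along-smooth-covers argument work at the level of the \emph{bounded coherent} subcategories while keeping the number of cones \emph{uniform} (independent of the input object $G$), and correctly tracking that the intermediate stacks $Y$ remain concentrated and Noetherian so that $\mathbb{R}q_\ast$ and $\mathbb{R}r_\ast$ are well-behaved (e.g.\ have bounded cohomological dimension, preserve $D^b_{\operatorname{coh}}$, and satisfy the projection formula and flat base change). The descendability bound $N$ from \Cref{prop:aoki_descent} is uniform by construction, so the uniformity propagates, but one needs to verify that $\mathbb{R}p_\ast\mathcal{O}_X \otimes^{\mathbb{L}} s$ stays in $D^b_{\operatorname{coh}}$ — this is where concentratedness (finite cohomological dimension of $\mathbb{R}p_\ast$) is essential. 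A secondary technical nuisance is the subtlety, flagged in the excerpt, of the pushforward $\mathbb{R}f_\ast = \mathbb{R}(f_{\operatorname{qc}})_\ast$ for stacks and its base-change compatibility; one should cite \cite[Theorem 2.6]{Hall/Rydh:2017} for the flat base-change isomorphism rather than reprove it.
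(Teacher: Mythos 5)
Your route is genuinely different from the paper's, and it has a gap at its foundation. The paper does \emph{not} use smooth presentations and descendability; it uses Rydh's theorem on the existence of proper covers (\cite[Theorem 5.4]{Rydh:2023}) to produce proper surjections $g\colon X\to\mathcal{X}$ and $h\colon Y\to\mathcal{Y}$ from separated Noetherian \emph{schemes}, then invokes \cite[Theorem 6.2]{Hall/Lamarche/Lank/Peng:2025} to get $D^b_{\operatorname{coh}}(\mathcal{Y})=\langle \mathbb{R}h_\ast D^b_{\operatorname{coh}}(Y)\rangle$, so that any $E$ lies in $\langle \mathbb{R}h_\ast E_Y\rangle_n$ for some $E_Y$ and some $n$ (depending on $E$ --- which is allowed, see below), and finally applies Aoki's theorem to the induced finite type morphism of schemes $f'\colon Y\to X$. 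Your plan instead hinges on \Cref{ex:hcover} to assert that $\mathbb{R}p_\ast\mathcal{O}_X$ is descendable in $D_{\operatorname{qc}}(\mathcal{X})$ for a smooth presentation $p\colon X\to\mathcal{X}$. But \Cref{ex:hcover} (i.e.\ \cite[Proposition 11.25]{Bhatt/Scholze:2017}) is a statement about h covers of Noetherian \emph{schemes}; it does not apply to a morphism whose target is an algebraic stack. This is not a cosmetic issue: the lemma only assumes $\mathcal{X}$ is separated and Noetherian (concentratedness is imposed on $\mathcal{Y}$, not $\mathcal{X}$), and for such stacks descendability of a smooth presentation can genuinely fail --- e.g.\ for $B(\mathbb{Z}/p)$ in characteristic $p$ the unit is not in the thick tensor ideal generated by the pushforward of the structure sheaf of the cover (this is essentially the failure of compact generation for non-tame stacks). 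The same problem recurs in your step (4) for the cover $r\colon Z\to Y$ of the stack $Y=\mathcal{Y}\times_{\mathcal{X}}X$. Without descendability, steps (3) and (4) do not get off the ground, so the reduction to schemes is not achieved.

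Two smaller remarks. First, your worry about keeping the number of cones uniform in $G$ is unnecessary: in \Cref{def:compact_boundedness}(3) the integer $n$ is allowed to depend on the object (only \emph{strongly} compactly bounded demands uniformity), and indeed the paper's proof produces an exponent $nm$ with $n$ depending on $E$. Second, the composite $\mathbb{R}q_\ast\mathbb{R}r_\ast\cong\mathbb{R}g_\ast\mathbb{L}s^\ast$ you write down involves an undefined $s$ and conflates the two projections out of $Z$; if you want to salvage a cover-based argument you should replace the smooth presentations by the proper scheme covers of \cite{Rydh:2023} and replace descendability by the generation statement of \cite{Hall/Lamarche/Lank/Peng:2025}, which is exactly what the paper does.
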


\begin{proof}
    By \cite[Theorem 5.4]{Rydh:2023} there exists a proper surjective morphism $g\colon X \to \mathcal{X}$ from a scheme $X$ that must be both Noetherian and separated (as $f$ is proper, it is in particular finite type and separated).
    Again applying loc.\ cit.\ to $X\times_\mathcal{X} \mathcal{Y}$ and composing with the projection $X\times_\mathcal{X} \mathcal{Y}\to \mathcal{Y}$ we obtain a proper surjective morphism $h\colon Y \to \mathcal{Y}$ where again $Y$ is necessarily a separated Noetherian scheme. 
    Moreover, there is an induced morphism $f^\prime\colon Y\to X$ that is of finite type.
    In short, we obtain the following (2-)commutative diagram 
    \begin{displaymath}
        \begin{tikzcd}[ampersand replacement=\&]
            Y \& {X\times_{\mathcal{X}}\mathcal{Y}} \& {\mathcal{Y}} \\
        	\& X \& {\mathcal{X}}
        	\arrow[from=1-1, to=1-2]
        	\arrow["h", bend left, from=1-1, to=1-3]
        	\arrow["{f^\prime}"', from=1-1, to=2-2]
        	\arrow[from=1-2, to=1-3]
        	\arrow[from=1-2, to=2-2]
        	\arrow["\lrcorner"{anchor=center, pos=0.125}, draw=none, from=1-2, to=2-3]
        	\arrow["f", from=1-3, to=2-3]
        	\arrow["g", from=2-2, to=2-3]
        \end{tikzcd}
    \end{displaymath}
    Invoking \cite[Theorem 6.2]{Hall/Lamarche/Lank/Peng:2025} one has
    $D^b_{\operatorname{coh}}(\mathcal{Y})=\langle \mathbb{R} h_\ast D^b_{\operatorname{coh}}(Y) \rangle$. 
    Thus, for $E \in D^b_{\operatorname{coh}}(\mathcal{Y})$ arbitrary, there exists an $E_Y\in D^b_{\operatorname{coh}}(Y)$ and integer $n\geq 0$ with $E\in \langle \mathbb{R}  h_\ast E_Y \rangle_n$ (e.g.\
    see \Cref{lem:subcategory_to_object_generation}).
    Additionally, as morphisms of finite type between separated Noetherian schemes are coherently bounded \cite[Theorem 3.5]{Aoki:2021}, we know there exists an object $E_X\in D^b_{coh}(X)$ and an integer $m\geq 0$ with $\mathbb{R}f^\prime_\ast E_Y\in\overline{\langle E_X \rangle}_m$. 
    Consequently, $\mathbb{R} (g \circ f^\prime)_\ast E_Y\in\overline{\langle \mathbb{R} g_\ast E_X \rangle}_m$, and hence, $\mathbb{R}f_\ast E\in\overline{\langle \mathbb{R} g_\ast E_X \rangle}_{nm}$ which shows the claim as $\mathbb{R} g_\ast E_X$ is bounded and coherent.
\end{proof}

The following theorem shows that one can check strong generation for (separated Noetherian concentrated) `quasi-DM' stacks (DM for Deligne--Mumford) on a cover.
By \cite[\href{https://stacks.math.columbia.edu/tag/06MF}{Tag 06MF}]{stacks-project} these are exactly the (separated Noetherian concentrated) algebraic stacks which allow for a cover such as in the statement of the theorem. 

\begin{theorem}\label{thm:quasi-DM_strong_generation}
    Let $\mathcal{S}$ be a separated Noetherian concentrated algebraic stack.
    Assume there exists a surjective, separated, finitely presented, quasi-finite flat morphism $U\to \mathcal{S}$ from a scheme such that $D^b_{\operatorname{coh}}(U)$ admits a strong (resp.\ classical) generator. 
    Then $D^b_{\operatorname{coh}}(\mathcal{S})$ admits a strong (resp.\ classical) generator.
\end{theorem}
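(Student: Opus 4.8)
The plan is to reduce the stacky statement to the already-proven scheme-level and presheaf-level machinery via a descendability argument along the given cover, in the spirit of \Cref{subsec:topologies} and \Cref{lem:coherent_boundedness_stacks}. First I would replace the given morphism $p\colon U\to\mathcal{S}$ by a better-behaved one: using that $p$ is separated, finitely presented, quasi-finite and flat (hence in particular quasi-affine by Zariski's main theorem, and affine after a further reduction, or at least one can run the argument with $U$ a scheme and $p$ representable), the key point is that $p$ is an fppf cover, so by \Cref{ex:hcover} (fppf covers are h covers) the object $A:=\mathbb{R}p_\ast\mathcal{O}_U$ is a descendable commutative monoid in $D_{\operatorname{qc}}(\mathcal{S})$. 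Note also that $p$ being quasi-finite, separated and finitely presented makes $\mathbb{R}p_\ast$ especially well-behaved: it has finite cohomological dimension and preserves $D^b_{\operatorname{coh}}$ (pseudo-coherence is preserved since $p$ is of finite type and concentrated, and boundedness since $p$ is quasi-finite so has cohomological dimension zero on quasi-coherent sheaves, or at worst bounded).

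The next step is to apply \Cref{prop:aoki_descent} with $\mathsf{T}=D_{\operatorname{qc}}(\mathcal{S})$, the monoid $A=\mathbb{R}p_\ast\mathcal{O}_U$, and the thick triangulated subcategory $\mathsf{S}=D^b_{\operatorname{coh}}(\mathcal{S})$, which is closed under the action of $A$ because $A\in D^b_{\operatorname{coh}}(\mathcal{S})$ (by the previous paragraph) and $D^b_{\operatorname{coh}}(\mathcal{S})$ is a tensor-ideal in itself among bounded coherent complexes — more precisely, $A\otimes^{\mathbb{L}} E\in D^b_{\operatorname{coh}}(\mathcal{S})$ for $E\in D^b_{\operatorname{coh}}(\mathcal{S})$ since both are bounded and coherent and $\mathcal{S}$ is Noetherian. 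This yields an integer $N\geq 0$ with
\begin{displaymath}
    D^b_{\operatorname{coh}}(\mathcal{S}) = \langle\{A\otimes^{\mathbb{L}} E \mid E\in D^b_{\operatorname{coh}}(\mathcal{S})\}\rangle_N.
\end{displaymath}
By the projection formula, $A\otimes^{\mathbb{L}} E = \mathbb{R}p_\ast\mathbb{L}p^\ast E$, so every object of $D^b_{\operatorname{coh}}(\mathcal{S})$ is finitely built, in a uniformly bounded number of steps, from the essential image of $\mathbb{R}p_\ast\colon D^b_{\operatorname{coh}}(U)\to D^b_{\operatorname{coh}}(\mathcal{S})$.

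Now I would invoke coherent boundedness of $\mathbb{R}p_\ast$. Since $p\colon U\to\mathcal{S}$ is of finite type between separated Noetherian algebraic stacks and $U$ is a scheme (hence concentrated), \Cref{lem:coherent_boundedness_stacks} applies and gives that $\mathbb{R}p_\ast\colon D_{\operatorname{qc}}(U)\to D_{\operatorname{qc}}(\mathcal{S})$ is coherently bounded: there is an object $H\in D^b_{\operatorname{coh}}(\mathcal{S})$ and, for each $F\in D^b_{\operatorname{coh}}(U)$, an integer $m=m(F)$ with $\mathbb{R}p_\ast F\in\overline{\langle H\rangle}_m$. Let $G_U$ be a strong generator of $D^b_{\operatorname{coh}}(U)$, say $D^b_{\operatorname{coh}}(U)=\langle G_U\rangle_{r+1}$. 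Given arbitrary $E\in D^b_{\operatorname{coh}}(\mathcal{S})$, the displayed equation writes $E$ inside $\langle\{\mathbb{R}p_\ast F : F\in D^b_{\operatorname{coh}}(U)\}\rangle_N$; each such $F$ lies in $\langle G_U\rangle_{r+1}$, and one must be slightly careful to pass this through $\mathbb{R}p_\ast$ — here I would use \Cref{lem:subcategory_to_object_generation} to replace the subcategory $\{\mathbb{R}p_\ast F\}$ by a single object $S\in\langle\{\mathbb{R}p_\ast F\}\rangle_1$ with $\operatorname{level}^{\{\mathbb{R}p_\ast F\}}(E)=\operatorname{level}^S(E)\leq N$, and $S$ is a finite direct sum of shifts of some $\mathbb{R}p_\ast F_0$; applying coherent boundedness to $F_0$ (and $G_U$) and combining, $S\in\overline{\langle H\oplus H'\rangle}_{m'}$ for a fixed $H,H'$ and a bound $m'$ that one checks is independent of $E$ (this is where the uniformity must be extracted — it follows because $F_0$ can be taken to lie in $\langle G_U\rangle_{r+1}$, a bounded build from the \emph{fixed} object $G_U$, and coherent boundedness of $\mathbb{R}p_\ast$ converts a build from $G_U$ into a build from a fixed object $H$ in a number of steps controlled by $r$). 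Hence $E\in\overline{\langle H\oplus H'\rangle}_{Nm'}$, and then by \cite[Lemma 2.14]{DeDeyn/Lank/ManaliRahul:2024a} (coherent boundedness, finitely many cones) $E\in\langle H\oplus H'\rangle_{Nm'}$, showing $H\oplus H'$ is a strong generator of $D^b_{\operatorname{coh}}(\mathcal{S})$. The classical-generator case follows by allowing the relevant integers to be $\infty$.

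\textbf{Main obstacle.} The delicate point is the \emph{uniformity} of the bound: coherent boundedness of $\mathbb{R}p_\ast$ a priori gives $\mathbb{R}p_\ast F\in\overline{\langle H\rangle}_{m(F)}$ with $m$ depending on $F$, whereas we need a single $m$ valid for all $F$ ranging over $\langle G_U\rangle_{r+1}$. Resolving this requires observing that boundedness conditions are compatible with building: if $F\in\langle G_U\rangle_{r+1}$ then $\mathbb{R}p_\ast F\in\langle\mathbb{R}p_\ast G_U\rangle_{r+1}$ (as $\mathbb{R}p_\ast$ is triangulated), and $\mathbb{R}p_\ast G_U\in\overline{\langle H\rangle}_{m_0}$ for the \emph{single} object $G_U$, so $\mathbb{R}p_\ast F\in\overline{\langle H\rangle}_{(r+1)m_0}$ — a bound independent of $F$. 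A secondary technical wrinkle is justifying that $A=\mathbb{R}p_\ast\mathcal{O}_U$ is bounded and coherent (needed for $\mathsf{S}$ to be $A$-stable inside $D_{\operatorname{qc}}(\mathcal{S})$): this uses that $p$ is quasi-finite (so cohomological dimension $\leq 0$ on sheaves, or bounded in general for the non-affine case) together with preservation of pseudo-coherence for finite-type morphisms of Noetherian stacks; one may alternatively circumvent it by working with the $A$-action on $D^b_{\operatorname{coh}}(\mathcal{S})$ directly via the projection formula $\mathbb{R}p_\ast\mathbb{L}p^\ast(-)$, which manifestly preserves $D^b_{\operatorname{coh}}$ since $p$ is of finite type, concentrated and quasi-finite.
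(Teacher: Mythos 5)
There is a genuine gap, and it sits at the heart of your strategy rather than in the ``uniformity'' issue you flag (that part is handled correctly, and is the same bookkeeping as in the proof of \Cref{thm:generators_presheaf}). The problem is that $p\colon U\to\mathcal{S}$ is only \emph{quasi-finite}, not finite, so $\mathbb{R}p_\ast$ does not preserve $D^b_{\operatorname{coh}}$. By Zariski's Main Theorem $p$ factors as a quasi-compact open immersion followed by a finite morphism, and already for an open immersion $j\colon \mathbb{A}^2\setminus\{0\}\hookrightarrow\mathbb{A}^2$ the complex $\mathbb{R}j_\ast\mathcal{O}$ has non-coherent $H^1$ (local cohomology); one easily promotes this to a surjective quasi-finite flat separated cover. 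Hence $A=\mathbb{R}p_\ast\mathcal{O}_U$ need not lie in $D^b_{\operatorname{coh}}(\mathcal{S})$, the subcategory $\mathsf{S}=D^b_{\operatorname{coh}}(\mathcal{S})$ is \emph{not} closed under $A\otimes^{\mathbb{L}}(-)=\mathbb{R}p_\ast\mathbb{L}p^\ast(-)$, and \Cref{prop:aoki_descent} does not apply as you invoke it (its inductive step also tensors with powers of $J=\operatorname{fib}(\mathbf{1}\to A)$, which leaves $D^b_{\operatorname{coh}}$ for the same reason). Your proposed workaround via the projection formula does not ``manifestly preserve $D^b_{\operatorname{coh}}$''---quasi-finiteness gives finite cohomological dimension, not coherence of the higher direct images. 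A secondary gap: \Cref{ex:hcover} (Bhatt--Scholze) establishes descendability of $\mathbb{R}f_\ast\mathcal{O}$ for h covers of Noetherian \emph{schemes}; descendability of $\mathbb{R}p_\ast\mathcal{O}_U$ over an algebraic \emph{stack} is a nontrivial additional claim in which the concentrated hypothesis is essential (for $B(\mathbb{Z}/p)$ in characteristic $p$, which is not concentrated, it fails), and nothing in the paper supplies it.

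The paper's actual proof is structured precisely to avoid this obstruction: it runs the Hall--Rydh quasi-finite flat d\'evissage \cite[Theorem D']{Hall/Rydh:2018}, which reduces the quasi-finite cover to two cases that are individually tractable---\emph{finite} surjective morphisms, where pushforward does preserve $D^b_{\operatorname{coh}}$ and one applies the descent result of Hall--Lamarche--Lank--Peng, and \'etale neighborhoods of open substacks, where the Mayer--Vietoris machinery of \Cref{thm:big_generators_presheaf} (via \Cref{rmk:extending_to_non_zariski}) together with \Cref{lem:coherent_boundedness_stacks} applies. Your descent-along-$A$ argument would work essentially as written if the cover were finite flat (the tame case, cf.\ the discussion of \cite[Theorem B.1]{Hall/Priver:2024}), but in the stated generality it needs the d\'evissage.
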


\begin{proof}[Proof of \Cref{thm:quasi-DM_strong_generation}]
    Again, we only show strong generation as again classical generation follows by taking $n=\infty$.
    The proof proceeds using \'{e}tale/quasi-finite flat d\'{e}vissage \cite[Theorem D']{Hall/Rydh:2018}.
    To this end, let $\mathbb{E}:=\operatorname{\textbf{Stack}}_{\textrm{repr}, \textrm{sep},\textrm{fp},\textrm{qff}/\mathcal{S}}$ denote the strictly full $2$-category of algebraic stacks over $\mathcal{S}$ consisting of those $\mathcal{X} \to \mathcal{S}$ which are representable by algebraic spaces, separated, finitely presented, quasi-finite flat; we simply denote the objects by $\mathcal{X}$ instead of $\mathcal{X} \to \mathcal{S}$.
    By \Cref{lem:stacks_representable} any morphism in $\mathbb{E}$ is representable by algebraic spaces, and hence, concentrated by \cite[Lemma 2.5]{Hall/Rydh:2017}. 
    In particular, as $\mathcal{S}$ is concentrated so is $\mathcal{X}$ for any $\mathcal{X}\in\mathbb{E}$.
    Let $\mathbb{D}$ denote those $\mathcal{X}\in \mathbb{E}$ such that for any $(\mathcal{Y}\to \mathcal{X})\in \mathbb{E}$ \'{e}tale, $D^b_{\operatorname{coh}}(\mathcal{Y})$ admits a strong generator.

    We first show $U\in \mathbb{D}$, so let $(\mathcal{Y}\to U)\in \mathbb{E}$ be \'{e}tale. 
    As any morphism in $\mathbb{E}$ is quasi-compact, separated and representable by algebraic spaces, $\mathcal{Y}$ is an algebraic space. 
    Moreover, as \'{e}tale morphisms are quasi-finite it follows from Stein factorization/Zariski's Main Theorem \cite[\href{https://stacks.math.columbia.edu/tag/082K}{Tag 082K}]{stacks-project} that $\mathcal{Y}$ is in fact a (separated Noetherian) scheme.
    Thus by \cite[Theorem 1.2]{Lank/Olander:2024} it follows that $D^b_{\operatorname{coh}}(\mathcal{Y})$ admits a strong generator.

    In order to apply \cite[Theorem D']{Hall/Rydh:2018} we need to show the following three conditions:
    \begin{enumerate}
        \item \label{item:strong_gen_cover_etale1} If $(\mathcal{X}^\prime \to \mathcal{X})\in\mathbb{E}$ is \'{e}tale and $\mathcal{X}\in\mathbb{D}$, then $\mathcal{X}^\prime\in\mathbb{D}$,
        \item \label{item:strong_gen_cover_etale2} if $(\mathcal{X}^\prime\to \mathcal{X})\in\mathbb{E}$ is finite and surjective and $\mathcal{X}^\prime\in\mathbb{D}$, then $\mathcal{X}\in\mathbb{D}$, and
        \item \label{item:strong_gen_cover_etale3} if $(\mathcal{U} \xrightarrow{j} \mathcal{X})$, $(\mathcal{X}^\prime \xrightarrow{f} \mathcal{X})\in\mathbb{E}$ where $j$ is an open immersion and $f$ is \'{e}tale and an isomorphism over the closed reduced substack corresponding to $|\mathcal{X}|\setminus |\mathcal{U}|$, then $\mathcal{X}\in\mathbb{D}$ whenever  where $\mathcal{U}$, $\mathcal{X}^\prime\in\mathbb{D}$.
    \end{enumerate}
    Then by loc.\ cit.\ $\mathcal{S}\in\mathbb{D}$ as $V\in\mathbb{D}$, and so $D^b_{\operatorname{coh}}(\mathcal{S})$ admits a strong generator.

    It rests to show \eqref{item:strong_gen_cover_etale1} to \eqref{item:strong_gen_cover_etale3}.
    By construction \eqref{item:strong_gen_cover_etale1} holds.
    To show \eqref{item:strong_gen_cover_etale2}, suppose $(\mathcal{Y}\to \mathcal{X})\in \mathbb{E}$ is \'{e}tale.
    Applying \cite[Theorem 6.2]{Hall/Lamarche/Lank/Peng:2025} to the base change $\mathcal{Y}^\prime := \mathcal{Y}\times_{\mathcal{X}}\mathcal{X}^\prime \to \mathcal{Y}$ 
    (which is also finite and surjective) and using that $D^b_{\operatorname{coh}}(\mathcal{Y}^\prime)$ admits a strong generator by assumption yields that $D^b_{\operatorname{coh}}(\mathcal{Y})$ has so too.
    Lastly, to show \eqref{item:strong_gen_cover_etale3} pick an \'{e}tale $(\mathcal{Y}\to \mathcal{X})\in \mathbb{E}$.
    By base changing $j\colon\mathcal{U} \to \mathcal{X}$ and  $f\colon\mathcal{X}^\prime \to\mathcal{X}$ along this morphism we may assume that $\mathcal{U}$ and $\mathcal{X}^\prime$ admit bounded derived categories with strong generators and we need to show so for $\mathcal{X}$.
    This follows from \Cref{thm:big_generators_presheaf}, noting \Cref{rmk:extending_to_non_zariski} and \cite[Example 5.6]{Hall/Rydh:2017}, and \Cref{lem:coherent_boundedness_stacks}. 
\end{proof}

For Deligne--Mumford stacks one can show that the choice of covering is irrelevant.

\begin{corollary}\label{cor:DM_stacks_existence_of_generation_from_scheme_cover}
    Let $\mathcal{S}$ be a separated Noetherian concentrated Deligne--Mumford stack. 
    Suppose there exists a surjective quasi-compact separated \'{e}tale morphism $U\to \mathcal{S}$ from a scheme such that $D^b_{\operatorname{coh}}(U)$ admits a strong (resp.\ classical) generator. 
    Then for any quasi-compact separated \'{e}tale morphism $\mathcal{X} \to \mathcal{S}$ representable by algebraic spaces, $D^b_{\operatorname{coh}}(\mathcal{X})$ admits a strong (resp.\ classical) generator.
\end{corollary}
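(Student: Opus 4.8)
The plan is to deduce this directly from \Cref{thm:quasi-DM_strong_generation}, applied not to $\mathcal{S}$ but to $\mathcal{X}$. As usual we only discuss strong generators, the classical statement being recovered by allowing $n=\infty$. First one checks that $\mathcal{X}$ itself satisfies the hypotheses of that theorem: since $\mathcal{X}\to\mathcal{S}$ is quasi-compact, separated and étale, $\mathcal{X}$ is Noetherian (as $\mathcal{S}$ is and the morphism is quasi-compact and of finite type) and separated (as $\mathcal{S}$ is); and it is concentrated because $\mathcal{X}\to\mathcal{S}$ is representable by algebraic spaces, hence concentrated by \cite[Lemma 2.5]{Hall/Rydh:2017}, and concentrated morphisms compose, so $\mathcal{X}\to\operatorname{Spec}(\mathbb{Z})$ is concentrated.

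It remains to feed \Cref{thm:quasi-DM_strong_generation} a suitable scheme cover of $\mathcal{X}$. The natural candidate is $U':=U\times_{\mathcal{S}}\mathcal{X}$. The first projection $p\colon U'\to\mathcal{X}$ is a base change of $U\to\mathcal{S}$, hence surjective, separated, quasi-finite and flat, and of finite presentation (a quasi-compact separated étale morphism is of finite presentation, and étale morphisms are flat and quasi-finite); these are exactly the adjectives required of the cover in \Cref{thm:quasi-DM_strong_generation}. The second projection $q\colon U'\to U$ is a base change of $\mathcal{X}\to\mathcal{S}$, hence quasi-compact, separated, étale and representable by algebraic spaces; as $U$ is a scheme, $U'$ is an algebraic space, and since $q$ is moreover separated and quasi-finite, $U'$ is in fact a (separated, Noetherian) scheme by Zariski's Main Theorem \cite[\href{https://stacks.math.columbia.edu/tag/082K}{Tag 082K}]{stacks-project}, just as in the proof of \Cref{thm:quasi-DM_strong_generation}. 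Finally, $q$ is a finite-type separated morphism of separated Noetherian schemes and $D^b_{\operatorname{coh}}(U)$ admits a strong generator, so $D^b_{\operatorname{coh}}(U')$ does too by \cite[Theorem 1.2]{Lank/Olander:2024}. Applying \Cref{thm:quasi-DM_strong_generation} to $\mathcal{X}$ with the cover $p\colon U'\to\mathcal{X}$ then yields that $D^b_{\operatorname{coh}}(\mathcal{X})$ admits a strong generator.

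There is no genuinely hard step: once one notices that the given étale cover of $\mathcal{S}$ pulls back to an étale cover of $\mathcal{X}$, the corollary is formal. The only points needing a little care are (i) bookkeeping the finiteness properties so that $p\colon U'\to\mathcal{X}$ matches the precise list ``surjective, separated, finitely presented, quasi-finite flat'' demanded by \Cref{thm:quasi-DM_strong_generation}, and (ii) verifying that $U'$ really is a scheme (not merely an algebraic space), so that it is a legitimate cover to which the theorem applies --- this is where Zariski's Main Theorem enters, together with \cite[Theorem 1.2]{Lank/Olander:2024} to transport the strong generator from $U$ to $U'$.
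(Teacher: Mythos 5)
Your proposal is correct and is essentially the paper's own argument: form $U'=U\times_{\mathcal{S}}\mathcal{X}$, observe it is a scheme via the Zariski's Main Theorem argument from the proof of \Cref{thm:quasi-DM_strong_generation}, transport the strong generator along $U'\to U$ using \cite[Theorem 1.2]{Lank/Olander:2024}, and apply \Cref{thm:quasi-DM_strong_generation} to $U'\to\mathcal{X}$. The only difference is that you spell out the verification that $\mathcal{X}$ is separated, Noetherian and concentrated and that $p\colon U'\to\mathcal{X}$ carries the required adjectives, which the paper leaves implicit.
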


\begin{proof}
    Again we only treat the strong generator case.
    Note that $U^\prime:=U \times_\mathcal{S} \mathcal{X}$ is a scheme (same argument as in the proof of \Cref{thm:quasi-DM_strong_generation}) and so by \cite[Theorem 1.2]{Lank/Olander:2024} applied to the projection $U^\prime \to U$ we have that $D^b_{\operatorname{coh}}(U^\prime)$ admits a strong generator and so also $D^b_{\operatorname{coh}}(\mathcal{X})$ by applying \Cref{thm:quasi-DM_strong_generation} to $U^\prime\to \mathcal{X}$.
\end{proof}

There are natural examples where  \Cref{thm:quasi-DM_strong_generation} applies, e.g.\ to `nice enough' quotient stacks.
Moreover, we recover \cite[Theorem B.1]{Hall/Priver:2024} (the case of strong generation for separated Noetherian quasi-excellent tame algebraic stack of finite Krull dimension) as tame algebraic stacks are concentrated.
\begin{remark}
    Although we did the above for `small generators' it is of course straightforward to adapt everything to hold for strong $\oplus$-generators with bounded and coherent cohomology.
\end{remark}
 
\bibliographystyle{alpha}
\bibliography{mainbib}

\end{document}